\numberwithin{equation}{section}
\newtheorem{thm}{Theorem}[section]
\newtheorem{lem}[thm]{Lemma}
\newtheorem{pro}[thm]{Proposition}
{\theoremstyle{definition}
\newtheorem{ex}[thm]{Example}
\newtheorem{rmk}[thm]{Remark}
\newtheorem{defi}[thm]{Definition}}
\newcommand{\lon }{\rightarrow}
\newcommand{\Hexp}{{\mathsf{Hexp}}}
\newcommand{\Diff}{{\mathsf{Diff}}}
\newcommand{\eb}{\mathfrak e_\beta}
\newcommand{\g}{\frkg}
\newcommand{\h}{\frkh}
\newcommand{\huaH}{\mathcal{H}}
\newcommand{\frke}{\mathfrak e}
\newcommand{\frkg}{\mathfrak g}
\newcommand{\frkh}{\mathfrak h}
\newcommand{\Id}{{\rm Id}}
\newcommand{\Hom}{\mathsf{Hom}}
\newcommand{\Der}{\mathsf{Der}}
\newcommand{\Ad}{\mathsf{Ad}}
\newcommand{\Aut}{\mathsf{Aut}}
\newcommand{\gl}{\mathfrak{gl}}
\newcommand{\sln}{\mathfrak {sl}}
\newcommand{\cen}{\mathsf{Cen}}
\newcommand{\ad}{\mathsf{ad}}
\begin{document}

\allowdisplaybreaks

\newcommand{\arXivNumber}{1904.06515}

\renewcommand{\PaperNumber}{137}

\FirstPageHeading

\ShortArticleName{Hom-Lie Algebras and Hom-Lie Groups, Integration and Differentiation}

\ArticleName{Hom-Lie Algebras and Hom-Lie Groups,\\ Integration and Differentiation}

\Author{Jun JIANG~$^\dag$, Satyendra Kumar MISHRA~$^\ddag$ and Yunhe SHENG~$^\dag$}

\AuthorNameForHeading{J.~Jiang, S.K.~Mishra and Y.~Sheng}

\Address{$^\dag$~Department of Mathematics, Jilin University, Changchun, Jilin Province, 130012, China}
\EmailD{\href{mailto:jiangjun20@mails.jlu.edu.cn}{jiangjun20@mails.jlu.edu.cn}, \href{mailto:shengyh@jlu.edu.cn}{shengyh@jlu.edu.cn}}

\Address{$^\ddag$~Statistics and Mathematics Unit, Indian Statistical Institute Bangalore, India}
\EmailD{\href{mailto:satyamsr10@gmail.com}{satyamsr10@gmail.com}}

\ArticleDates{Received June 01, 2020, in final form December 10, 2020; Published online December 17, 2020}

\Abstract{In this paper, we introduce the notion of a (regular) Hom-Lie group. We associate a Hom-Lie algebra to a Hom-Lie group and show that every regular Hom-Lie algebra is integrable. Then, we define a Hom-exponential ($\mathsf{Hexp}$) map from the Hom-Lie algebra of a Hom-Lie group to the Hom-Lie group and discuss the universality of this $\mathsf{Hexp}$ map. We also describe a Hom-Lie group action on a smooth manifold. Subsequently, we give the notion of an adjoint representation of a Hom-Lie group on its Hom-Lie algebra. At last, we integrate the Hom-Lie algebra $(\mathfrak{gl}(V),[\cdot,\cdot],\mathsf{Ad})$, and the derivation Hom-Lie algebra of a Hom-Lie algebra.}

\Keywords{Hom-Lie algebra; Hom-Lie group; derivation; automorphism; integration}

\Classification{17B40; 17B61; 22E60; 58A32}

\section{Introduction}

The notion of a Hom-Lie algebra first appeared in the study of quantum deformations of Witt and Virasoro algebras in~\cite{HLS}. Hom-Lie algebras are generalizations of Lie algebras, where the Jacobi identity is twisted by a linear map, called the Hom-Jacobi identity. It is known that $q$-deformations of the Witt and the Virasoro algebras have the structure of a Hom-Lie algebra~\cite{HLS,hu}. There is a growing interest in Hom-algebraic structures because of their close relationship with the discrete and deformed vector fields, and differential calculus \cite{HLS,LD1,LD2}. In particular, representations and deformations of Hom-Lie algebras were studied in \cite{AEM,MS1,sheng}; a categorical interpretation of Hom-Lie algebras was described in \cite{CatDes}; the categorification of Hom-Lie algebras was given in~\cite{shengchen}; geometric and algebraic generalizations of Hom-Lie algebras were given in~\cite{CLS,LGT,HLR}; quantization of Hom-Lie algebras was studied in~\cite{Yao2}; and the universal enveloping algebra of Hom-Lie algebras was studied in~\cite{LGMT,Yau1}.

The notion of Hom-groups was initially introduced by Caenepeel and Goyvaerts in \cite{CatDes}. In \cite{LGMT}, Laurent-Gengoux, Makhlouf and Teles first gave a new construction of the universal enveloping algebra that is different from the one in \cite{Yau1}. This new construction leads to a Hom-Hopf algebra structure on the universal enveloping algebra of a Hom-Lie algebra. Moreover, one can associate a Hom-group to any Hom-Lie algebra by considering group-like elements in its universal enveloping algebra. Recently, M.~Hassanzadeh developed representations and a (co)homology theory for Hom-groups in \cite{HG1}. He also proved Lagrange's theorem for finite Hom-groups in \cite{HG2}. The recent developments on Hom-groups (see \cite{HG1,HG2,LGMT}) make it natural to study Hom-Lie groups and to explore the relationship between Hom-Lie groups and Hom-Lie algebras.

In this paper, we introduce a (real) Hom-Lie group as a Hom-group $(G,\diamond,e_{\Phi},\Phi)$, where the underlying set $G$ is a (real) smooth manifold, the Hom-group operations (such as the product and the inverse) are smooth maps, and the underlying structure map $\Phi\colon G\rightarrow G$ is a diffeomorphism. We associate a Hom-Lie algebra to a Hom-Lie group by considering the notion of left-invariant sections of the pullback bundle $\Phi^{!}TG$. We define one-parameter Hom-Lie subgroups of a Hom-Lie group and discuss a Hom-analogue of the exponential map from the Hom-Lie algebra of a Hom-Lie group to the Hom-Lie group. Later on, we consider Hom-Lie group actions on a~manifold $M$ with respect to a map $\iota\in \Diff(M)$ and define an adjoint representation of a Hom-Lie group on its Hom-Lie algebra. Finally, we discuss the integration of the Hom-Lie algebra $(\gl(V),[\cdot,\cdot]_{\beta},\Ad_{\beta})$ and the derivation Hom-Lie algebra $\Der(\g)$ of a Hom-Lie algebra $(\g,[\cdot,\cdot]_{\g},\phi_{\g})$.

All the results in the paper are under the regularity hypothesis. As appeared in the literature, e.g., \cite{LS}, Hom-Lie algebras are not necessarily regular. We will explore this more general case in the future.

The paper is organized as follows.
In Section \ref{sec:p}, we recall some basic definitions and results concerning Hom-Lie algebras and Hom-groups.
In Section \ref{Homliealgebra}, we define the notion of a Hom-Lie group with some useful examples. If $(G,\diamond,e_{\Phi},\Phi)$ is a Hom-Lie group, then we show that the space of left-invariant sections of the pullback bundle $\Phi^{!}TG$ has a Hom-Lie algebra structure. Consequently, we deduce a Hom-Lie algebra structure on the fibre of the pullback bundle $\Phi^{!}TG$ at~$e_{\Phi}$. In this way, we associate a Hom-Lie algebra $\big(\g^{!},[\cdot,\cdot]_{\g^{!}},\phi_{\g^{!}}\big)$ to the Hom-Lie group $(G,\diamond,e_{\Phi},\Phi)$, where $\g^{!}:=\Phi^{!}T_{e_{\Phi}}G$. We also show that every regular Hom-Lie algebra is integrable. Next, we define one-parameter Hom-Lie subgroups of a Hom-Lie group $(G,\diamond,e_{\Phi},\Phi)$ in terms of a weak homomorphism of Hom-Lie groups. We establish a one-to-one correspondence with $\g^{!}$. In the end, we define Hom-exponential map $\Hexp\colon \g^{!}\rightarrow G$ and discuss its properties.
In Section~\ref{actionhom}, we study Hom-Lie group actions on a smooth manifold $M$ with respect to a diffeomorphism $\iota\in \Diff(M)$. We define representations of a Hom-Lie group on a vector space~$V$ with respect to a map $\beta\in {\rm GL}(V)$, which leads to the notion of an adjoint representation of a Hom-Lie group on the associated Hom-Lie algebra.
In Section~\ref{Integrationgl}, we consider the Hom-Lie group $({\rm GL}(V),\diamond, \beta, \Ad_{\beta})$, where $V$ is a vector space and $\beta\in {\rm GL}(V)$. We show that the triple $(\gl(V),[\cdot,\cdot]_{\beta},\Ad_{\beta})$ is the Hom-Lie algebra of the Hom-Lie group $({\rm GL}(V),\diamond, \beta, \Ad_{\beta})$.
In the last section, we show that the derivation Hom-Lie algebra $\Der(\g)$ is the Hom-Lie algebra of the Hom-Lie group of automorphisms of $(\g,[\cdot,\cdot]_{\g},\phi_{\g})$.

\section{Preliminaries}\label{sec:p}

In this section, we first recall definitions of Hom-Lie algebras and Hom-groups.

\subsection{Hom-Lie algebras}
\begin{defi}
 A (multiplicative) {\it Hom-Lie algebra} is a triple $(\mathfrak{g},[\cdot,\cdot]_{\mathfrak{g}},\phi_{\mathfrak{g}})$ consisting of a~vector space $\mathfrak{g}$, a skew-symmetric bilinear map (bracket) $[\cdot,\cdot]_{\mathfrak{g}}\colon \wedge^2\mathfrak{g}\longrightarrow
 \mathfrak{g}$, and a linear map $\phi_{\mathfrak{g}}\colon \mathfrak{g}\lon \mathfrak{g}$ preserving the bracket, such that the following Hom-Jacobi
 identity with respect to $\phi_{\mathfrak{g}}$ is satisfied:
 \begin{equation*}
 [\phi_{\mathfrak{g}}(x),[y,z]_\g]_\g+[\phi_{\mathfrak{g}}(y),[z,x]_\g]_\g+[\phi_{\mathfrak{g}}(z),[x,y]_\g]_\g=0, \qquad\forall\, x,y,z\in \g.
 \end{equation*}
A Hom-Lie algebra $(\mathfrak{g},[\cdot,\cdot]_{\mathfrak{g}},\phi_{\mathfrak{g}})$ is called a {\it regular Hom-Lie algebra} if $\phi_{\mathfrak{g}}$ is
an invertible map.
\end{defi}

\begin{lem}\label{RemarkS6}
Let $(\g, [\cdot, \cdot]_{\g}, \phi_{\g})$ be a regular Hom-Lie algebra. Then $(\g, [\cdot, \cdot]_{\rm Lie})$ is a Lie algebra, where the Lie bracket $[\cdot, \cdot]_{\rm Lie}$ is given by $[x, y]_{\rm Lie}=\big[\phi_{\g}^{-1}(x), \phi_{\g}^{-1}(y)\big]_{\g}$ for all $x, y \in \g$.
\end{lem}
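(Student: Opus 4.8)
The plan is to verify the three defining properties of a Lie bracket for $[\cdot,\cdot]_{\rm Lie}$, the only nontrivial one being the Jacobi identity.

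First I would record the elementary observation that, since $\phi_{\g}$ is invertible and preserves $[\cdot,\cdot]_{\g}$, its inverse $\phi_{\g}^{-1}$ also preserves the bracket: applying $\phi_{\g}^{-1}$ to $\phi_{\g}([x,y]_{\g})=[\phi_{\g}(x),\phi_{\g}(y)]_{\g}$ and then replacing $x,y$ by $\phi_{\g}^{-1}(x),\phi_{\g}^{-1}(y)$ gives $\phi_{\g}^{-1}([x,y]_{\g})=[\phi_{\g}^{-1}(x),\phi_{\g}^{-1}(y)]_{\g}$. Bilinearity of $[\cdot,\cdot]_{\rm Lie}$ is then immediate from bilinearity of $[\cdot,\cdot]_{\g}$ and linearity of $\phi_{\g}^{-1}$, and skew-symmetry of $[\cdot,\cdot]_{\rm Lie}$ follows at once from skew-symmetry of $[\cdot,\cdot]_{\g}$.

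For the Jacobi identity I would expand $[x,[y,z]_{\rm Lie}]_{\rm Lie}$: by definition it equals $[\phi_{\g}^{-1}(x),\phi_{\g}^{-1}([\phi_{\g}^{-1}(y),\phi_{\g}^{-1}(z)]_{\g})]_{\g}$, and by the observation above this is $[\phi_{\g}^{-1}(x),[\phi_{\g}^{-2}(y),\phi_{\g}^{-2}(z)]_{\g}]_{\g}$. Writing the analogous expressions for the two cyclic permutations and summing, the Jacobiator of $[\cdot,\cdot]_{\rm Lie}$ becomes
\[
\big[\phi_{\g}^{-1}(x),[\phi_{\g}^{-2}(y),\phi_{\g}^{-2}(z)]_{\g}\big]_{\g}+\big[\phi_{\g}^{-1}(y),[\phi_{\g}^{-2}(z),\phi_{\g}^{-2}(x)]_{\g}\big]_{\g}+\big[\phi_{\g}^{-1}(z),[\phi_{\g}^{-2}(x),\phi_{\g}^{-2}(y)]_{\g}\big]_{\g}.
\]
Putting $a=\phi_{\g}^{-2}(x)$, $b=\phi_{\g}^{-2}(y)$, $c=\phi_{\g}^{-2}(z)$ --- which is legitimate precisely because $\phi_{\g}$ is invertible, i.e.\ because the Hom-Lie algebra is regular, so that $\phi_{\g}^{-1}(x)=\phi_{\g}(a)$ and so on --- this sum equals $[\phi_{\g}(a),[b,c]_{\g}]_{\g}+[\phi_{\g}(b),[c,a]_{\g}]_{\g}+[\phi_{\g}(c),[a,b]_{\g}]_{\g}$, which vanishes by the Hom-Jacobi identity. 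Hence $[\cdot,\cdot]_{\rm Lie}$ satisfies the Jacobi identity and $(\g,[\cdot,\cdot]_{\rm Lie})$ is a Lie algebra.

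I do not expect any genuine obstacle here: the computation is essentially forced once one notices that $\phi_{\g}^{-1}$ is a morphism for the bracket, and the regularity hypothesis enters only to ensure that $\phi_{\g}^{-1}$ (hence $\phi_{\g}^{-2}$) exists and is bijective, which is exactly what makes the substitution reducing the Jacobiator to the Hom-Jacobi identity legitimate. The only minor care needed is to keep track of the powers of $\phi_{\g}^{-1}$ correctly when unwinding the nested brackets.
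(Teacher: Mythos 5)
Your proof is correct. The paper states this lemma without giving any proof, and your argument---first checking that $\phi_{\g}^{-1}$ preserves $[\cdot,\cdot]_{\g}$, then unwinding the nested brackets to $\big[\phi_{\g}^{-1}(x),[\phi_{\g}^{-2}(y),\phi_{\g}^{-2}(z)]_{\g}\big]_{\g}$ and substituting $a=\phi_{\g}^{-2}(x)$, $b=\phi_{\g}^{-2}(y)$, $c=\phi_{\g}^{-2}(z)$ to reduce the Jacobiator to the Hom-Jacobi identity---is exactly the standard untwisting computation the authors are implicitly relying on, with every step valid.
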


In the sequel, we always assume that $\phi_{\mathfrak{g}}$ is an invertible map. That is, in this paper, all the Hom-Lie algebras are assumed to be regular Hom-Lie algebras.

\begin{rmk}
In \cite{CatDes}, the authors used a monoidal categorical
approach to give an intrinsic study of regular Hom-type algebraic structures. In particular, a regular Hom-Lie algebra is called a~monoidal Hom-Lie algebra there. See~\cite{BoVer} for the categorical framework study of the BiHom-type structures.
\end{rmk}

\begin{defi} Let $(\mathfrak{g},[\cdot,\cdot]_{\mathfrak{g}},\phi_{\mathfrak{g}})$ and $ (\mathfrak{h},[\cdot,\cdot]_{\mathfrak{h}},\phi_{\mathfrak{h}})$ be Hom-Lie algebras.
\begin{itemize}\itemsep=0pt
 \item[\rm{(i)}] A linear map $f\colon \mathfrak{g}\lon \mathfrak{h}$ is called a {\it weak homomorphism} of Hom-Lie algebras if
\begin{equation*}
\phi_{\h}f[x,y]_{\g}=[f(\phi_{\g}(x)),f(\phi_{\g}(y))]_{\h},\qquad\forall\, x,y\in \mathfrak{g}.
\end{equation*}

 \item[\rm{(ii)}] A weak homomorphism $f\colon\mathfrak{g}\lon \mathfrak{h}$ is called a {\it homomorphism} if $f$ also satisfies
 \begin{gather*}
 f\circ \phi_{\mathfrak{g}} = \phi_{\mathfrak{h}}\circ f.
 \end{gather*}
\end{itemize}
 \end{defi}

\begin{defi}A representation of a Hom-Lie algebra $(\mathfrak{g},[\cdot,\cdot]_{\frkg},\phi_{\mathfrak{g}})$ on a vector space $V$ with respect to $\beta \in\mathfrak{gl}(V)$ is a linear map $\rho\colon \mathfrak{g}\lon \mathfrak{gl}(V)$ such that for all $x,y\in\mathfrak{g}$, the following equations are satisfied
\begin{gather*}
\rho(\phi_{\mathfrak{g}}(x))\circ \beta = \beta\circ\rho(x),\\
\rho([x,y]_{\mathfrak{g}})\circ\beta = \rho(\phi_{\mathfrak{g}}(x))\circ\rho(y)-\rho(\phi_{\mathfrak{g}}(y))\circ\rho(x).
\end{gather*}
\end{defi}

For all $x\in\mathfrak{g}$, let us define a map $\ad_{x}\colon \mathfrak{g}\lon \mathfrak{g}$ by
\begin{gather*}
\ad_{x}(y)=[x,y]_{\mathfrak{g}},\qquad\forall\, y \in \mathfrak{g}.
\end{gather*}
Then $\ad\colon \g\longrightarrow\gl(\mathfrak{g})$ is a representation of the Hom-Lie algebra $(\mathfrak{g},[\cdot,\cdot]_{\mathfrak{g}},\phi_{\mathfrak{g}})$ on $\g$ with respect to $\phi_\g$, which is called the {\it adjoint representation}.

Let $(\rho,V,\beta)$ be a representation of a Hom-Lie algebra $(\mathfrak{g},[\cdot,\cdot]_{\frkg},\phi_{\mathfrak{g}})$ on a vector space $V$ with respect to the map $\beta \in\mathfrak{gl}(V)$. Then let us recall from \cite{CaiSheng} that the cohomology of the Hom-Lie algebra $(\mathfrak{g},[\cdot,\cdot]_{\mathfrak{g}},\phi_\g)$ with coefficients in $(\rho,V,\beta)$ is the cohomology of the cochain complex $C^{k}(\mathfrak{g},V)=\Hom(\wedge^{k}\mathfrak{g},V)$ with the coboundary operator $d\colon C^{k}(\mathfrak{g},V)\lon C^{k+1}(\mathfrak{g},V)$ defined by
\begin{gather*}
 (df)(x_{1},\dots,x_{k+1})=\sum_{i=1}^{k+1}(-1)^{i+1}\rho(x_{i})\big(f\big(\phi_{\mathfrak{g}}^{-1}x_{1},\dots,
\widehat{\phi_{\mathfrak{g}}^{-1}x_{i}},\dots,\phi_{\mathfrak{g}}^{-1}x_{k+1}\big)\big)\\
\qquad{}+\sum_{i<j}(-1)^{i+j}\beta
f\big(\big[\phi_{\mathfrak{g}}^{-2}x_{i},\phi_{\mathfrak{g}}^{-2}x_{j}\big]_{\mathfrak{g}},\phi_{\mathfrak{g}}^{-1}x_{1},\dots,
\widehat{\phi_{\mathfrak{g}}^{-1}x_{i}},\dots,\widehat{\phi_{\mathfrak{g}}^{-1}x_{j}},\dots,\phi_{\mathfrak{g}}^{-1}x_{k+1}\big).
\end{gather*}
 The fact that $d^2=0$ is proved in \cite{CaiSheng}. Denote by $\mathcal{Z}^k(\g;\rho)$ and $\mathcal{B}^k(\g;\rho)$ the sets of $k$-cocycles and $k$-coboundaries respectively. We define the $k$-th cohomology group
$\mathcal{H}^k(\g;\rho)$ to be $\mathcal{Z}^k(\g;\rho)\!/\!\mathcal{B}^k(\g;\rho)$.

Let $(\ad,\g,\phi_\g)$ be the adjoint representation. For any 0-Hom-cochain $x\in \g=C^0(\g,\g)$, we have
\[(dx)(y)=[y,x]_{\mathfrak{g}},\qquad\forall\, y\in \mathfrak{g}.\] Thus, we have $dx=0$ if and only if $x\in \cen(\g)$, where $\cen(\g)$ denotes the center of $\g$.
 Therefore,
\[\mathcal{H}^{0}(\mathfrak{g},\ad)=\mathcal{Z}^{0}(\mathfrak{g},\ad)=\cen(\g).\]

\begin{defi}\label{def:Derivation}
A linear map $D\colon \mathfrak{g}\lon \mathfrak{g}$ is called a {\it derivation} of a Hom-Lie algebra $(\mathfrak{g},[\cdot,\cdot]_{\g}, \phi_{\g})$ if the following identity holds:
\begin{gather*}
\label{eq:37}D[x,y]_{\g}=\big[\phi(x),\big(\Ad_{\phi_{\g}^{-1}}D\big)(y)\big]_{\g}+\big[\big(\Ad_{\phi_{\g}^{-1}}D\big)(x),
\phi(y)\big]_{\g},\qquad\forall\, x,y\in\mathfrak{g}.
\end{gather*}
We denote the space of derivations of the Hom-Lie algebra $(\mathfrak{g},[\cdot,\cdot]_{\g}, \phi_{\g})$ by $\Der(\g)$.
\end{defi}

Let us observe that if $D\circ \phi=\phi\circ D$ in the Definition~\ref{def:Derivation}, then any derivation of the Hom-Lie algebra $(\mathfrak{g},[\cdot,\cdot]_{\g}, \phi_{\g})$ is a $\phi_{\g}$-derivation (see \cite{sheng} for more details).
\begin{ex}Let $(\g,[\cdot,\cdot]_{\g},\phi_{\g})$ be a Hom-Lie algebra. For each $x\in\g$, $\ad_x$ is a derivation of $(\mathfrak{g},[\cdot,\cdot]_{\g}, \phi_{\g})$ that we call an ``inner derivation''.
\end{ex}

 Let us denote the space of inner derivations by $\sf{InnDer}(\g)$. It is immediate to see that
\[\mathcal{Z}^{1}(\mathfrak{g},\ad)=\Der(\g),\qquad \mathcal{B}^{1}(\mathfrak{g},\ad)=\sf{InnDer}(\g).\]
Therefore,
\[ \mathcal{H}^{1}(\mathfrak{g},\ad)= \Der(\g)/\sf{InnDer}(\g)=\sf{OutDer}(\g).\]
Here, $\sf{OutDer}(\g)$ denotes the space of outer derivations of the Hom-Lie algebra $(\g,[\cdot,\cdot]_{\g},\phi_{\g})$.

Let $V$ be a vector space, and $\beta\in {\rm GL}(V)$. Let us define a skew-symmetric bilinear bracket operation $[\cdot,\cdot]_{\beta}\colon \wedge^2\mathfrak{gl}(V)\longrightarrow\mathfrak{gl}(V)$ by
\begin{gather*}\label{eq:bracket}
[A,B]_{\beta}=\beta \circ A \circ\beta^{-1}\circ B \circ\beta^{-1}-\beta\circ B \circ\beta^{-1}\circ A\circ \beta^{-1}, \qquad\forall\, A,B\in \mathfrak{gl}(V).
\end{gather*}
We also define a map $\Ad_{\beta}\colon \mathfrak{gl}(V)\lon \mathfrak{gl}(V)$ by
\begin{equation*}\label{eq:Ad}
\Ad_{\beta}(A)=\beta\circ A\circ \beta^{-1},\qquad\forall\, A\in \mathfrak{gl}(V).
\end{equation*}
With the above notations, we have the following proposition.
\begin{pro}[{\cite[Proposition 4.1]{shengxiong}}]\label{pro:Hom-Lie}
 The triple $(\mathfrak{gl}(V),[\cdot,\cdot]_{\beta},\Ad_{\beta})$ is a regular Hom-Lie algebra.
 \end{pro}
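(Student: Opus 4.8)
The statement contains four things to verify: that $[\cdot,\cdot]_\beta$ is skew-symmetric and bilinear, that $\Ad_\beta$ preserves $[\cdot,\cdot]_\beta$, that the Hom-Jacobi identity holds, and that $\Ad_\beta$ is invertible. The first is immediate from the defining formula, and the last is immediate too, since $\beta\in{\rm GL}(V)$ forces $\Ad_{\beta^{-1}}$ to be a two-sided inverse of $\Ad_\beta$. Hence the real content is the multiplicativity of $\Ad_\beta$ together with the Hom-Jacobi identity, and for these I would argue structurally rather than by brute force.

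The key observation is that $(\mathfrak{gl}(V),[\cdot,\cdot]_\beta,\Ad_\beta)$ is a Yau twist of an ordinary Lie algebra. Define a bilinear operation on ${\rm End}(V)$ by $A\bullet B:=A\circ\beta^{-1}\circ B$. This product is associative (the interior $\beta^{-1}$ makes $(A\bullet B)\bullet C=A\circ\beta^{-1}\circ B\circ\beta^{-1}\circ C=A\bullet(B\bullet C)$), so its commutator $[A,B]_{\mathrm{Lie}}:=A\bullet B-B\bullet A$ gives an ordinary Lie algebra structure on $\mathfrak{gl}(V)$. Moreover $\Ad_\beta$ is an automorphism of $({\rm End}(V),\bullet)$: expanding $\Ad_\beta(A)\bullet\Ad_\beta(B)=(\beta\circ A\circ\beta^{-1})\circ\beta^{-1}\circ(\beta\circ B\circ\beta^{-1})$ and collapsing $\beta^{-1}\circ\beta^{-1}\circ\beta=\beta^{-1}$ recovers $\beta\circ A\circ\beta^{-1}\circ B\circ\beta^{-1}=\Ad_\beta(A\bullet B)$; consequently $\Ad_\beta$ is also an automorphism of the Lie algebra $(\mathfrak{gl}(V),[\cdot,\cdot]_{\mathrm{Lie}})$. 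Finally a one-line computation gives $[A,B]_\beta=\Ad_\beta([A,B]_{\mathrm{Lie}})$. Thus $(\mathfrak{gl}(V),[\cdot,\cdot]_\beta,\Ad_\beta)$ is obtained from the Lie algebra $(\mathfrak{gl}(V),[\cdot,\cdot]_{\mathrm{Lie}})$ by twisting the bracket with the invertible Lie automorphism $\Ad_\beta$; this is exactly the construction converse to Lemma~\ref{RemarkS6}, and for such a twist the Hom-Jacobi identity and the multiplicativity of the twisting map are formal consequences of the ordinary Jacobi identity and the automorphism property, while regularity is clear. This proves the proposition.

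If one prefers a self-contained verification, the same conclusions follow by direct manipulation of the $\beta^{\pm1}$'s: multiplicativity, $\Ad_\beta[A,B]_\beta=[\Ad_\beta(A),\Ad_\beta(B)]_\beta$, is a short cancellation, and for Hom-Jacobi one checks that $[\Ad_\beta(A),[B,C]_\beta]_\beta$ equals $\beta^2$ composed with an alternating sum of four words in $A$, $B$, $C$ separated by copies of $\beta^{-1}$, post-composed with $\beta^{-2}$, after which the cyclic sum over $(A,B,C)$ causes all of these monomials to cancel in pairs.

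There is no genuine obstacle here; the only thing to watch is bookkeeping. In the computational route the Hom-Jacobi sum has a dozen terms and it is easy to drop or misplace a $\beta^{\pm1}$; in either route one should remember that "regular Hom-Lie algebra" demands both invertibility of $\Ad_\beta$ \emph{and} that $\Ad_\beta$ preserve the bracket, not merely the latter. The structural route is preferable precisely because it isolates the single substantive identity $[A,B]_\beta=\Ad_\beta([A,B]_{\mathrm{Lie}})$ and reduces everything else to the textbook fact that the commutator of an associative algebra is a Lie algebra.
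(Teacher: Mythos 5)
Your proof is correct. Note that the paper itself offers no proof of this proposition: it is quoted from \cite{shengxiong}, where the verification is done by direct manipulation of the $\beta^{\pm 1}$'s. Your structural route is genuinely different and arguably cleaner. The three identities on which it rests all check out: $A\bullet B=A\circ\beta^{-1}\circ B$ is associative because the interior $\beta^{-1}$ sits symmetrically; $\Ad_\beta(A)\bullet\Ad_\beta(B)=\beta\circ A\circ\beta^{-1}\circ B\circ\beta^{-1}=\Ad_\beta(A\bullet B)$; and $\Ad_\beta\big([A,B]_{\mathrm{Lie}}\big)=[A,B]_\beta$ where $[A,B]_{\mathrm{Lie}}=A\bullet B-B\bullet A$. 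The proposition then reduces to the standard Yau-twist fact that post-composing the bracket of a Lie algebra with one of its automorphisms $\phi$ yields a multiplicative Hom-Lie algebra with structure map $\phi$; that fact is itself a two-line computation, since $[\phi(x),[y,z]_\phi]_\phi=\phi^2([x,[y,z]])$ so the cyclic sum vanishes by ordinary Jacobi, and multiplicativity of $\phi$ follows from the automorphism property. Your identification of the underlying Lie bracket is also consistent with the paper: applying Lemma~\ref{RemarkS6} to $(\gl(V),[\cdot,\cdot]_\beta,\Ad_\beta)$ returns exactly $A\circ\beta^{-1}\circ B-B\circ\beta^{-1}\circ A$, so your construction really is the inverse of that lemma. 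What the structural route buys is that the twelve-term Hom-Jacobi cancellation never has to be written out, and it makes visible why the example works (it is a twist of an honest Lie algebra, which is also what makes the integration results of Section~\ref{Integrationgl} possible); what the computational route of the cited reference buys is self-containedness. Either is acceptable.
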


 The Hom-Lie algebra $(\mathfrak{gl}(V),[\cdot,\cdot]_{\beta},\Ad_{\beta})$ plays an important role in the representation theory of Hom-Lie algebras. See~\cite{shengxiong} for more details.

 \subsection{Hom-groups}
Throughout this paper, we consider regular Hom-groups that is the case when the structure map is invertible, and this notion can be traced back to Caenepeel and Goyvaerts's pioneering work~\cite{CatDes}. The axioms in the following definition of Hom-group is different from the one in~\cite{HG1,HG2,LGMT}. However, we show that if the structure map is invertible, then some axioms in the original definition are redundant and can be obtained from the Hom-associativity condition.

\begin{defi}\label{Hom-group}\samepage
A (regular) Hom-group is a set $G$ equipped with a product $\diamond\colon G\times G\longrightarrow G$, a~bijective map $\Phi\colon G\longrightarrow G$ such that the following axioms are satisfied
\begin{itemize}\itemsep=0pt
\item[\rm(i)] $\Phi(x\diamond y)=\Phi(x)\diamond \Phi(y)$;
\item[\rm(ii)] the product is Hom-associative, i.e.,
 \begin{equation*}
 \label{eq:morcon0}\Phi(x)\diamond (y\diamond z)= (x\diamond y)\diamond\Phi(z), \qquad\forall\, x,y, z\in G;
 \end{equation*}
 \item[\rm(iii)] there exists a unique Hom-unit $e_\Phi\in G$ such that
 \begin{equation*}
 x\diamond e_\Phi=e_\Phi\diamond x= \Phi (x),\qquad \forall\, x\in G;
 \end{equation*}

 \item[\rm(iv)] for each $x\in G$, there exists an element $x^{-1}\in G$ satisfying the following condition
 \begin{equation*}
 x\diamond x^{-1}=x^{-1}\diamond x=e_\Phi.
 \end{equation*}
 \end{itemize}
We denote a Hom-group by $(G,\diamond,e_\Phi,\Phi)$.
 \end{defi}

\begin{rmk}
The category of sets $(\textbf{Sets}, \times, \{\ast\}, \tau)$ (where $\tau$ is the twist) is a symmetric (strict) monoidal category. Algebras (monoids) in $\textbf{Sets}$ are also bialgebras since every set $X$ has a unique structure of coalgebra in $\textbf{Sets}$, namely $\Delta(x)=(x,x)$ and $\varepsilon(x)=\ast$, for all $x\in X$. A Hopf algebra in $\textbf{Sets}$ is a group. Let $X$ be a set and $\pi$ be a permutation. Then $(X, \pi, \Delta, \varepsilon)$ is a Hom-comonoid, where $\Delta\colon X\rightarrow X\times X$ and $ \varepsilon\colon X\rightarrow X$ are the maps given by $\Delta(x)=\big(\pi^{-1}(x),\pi^{-1}(x)\big)$ and $ \varepsilon(x)=\ast$. Similarly, if $\phi$ is an automorphism of a group $G$, then $(G,\phi)$ with structure maps
\begin{gather*}
g\cdot h=\phi(gh),\qquad \eta(\ast)=1_{G}, \qquad \varepsilon(g)=\ast,\qquad
\Delta(g)=\big(\phi^{-1}(g), \phi^{-1}(g)\big), \qquad S(g)=g^{-1},
\end{gather*}
is a Hom-group, that is a Hopf algebra in $\tilde{\huaH}(\textbf{Sets})$ in \cite[Section~5]{CatDes}. Thus a monoidal categorical
approach can give an intrinsic study of regular Hom-groups.
\end{rmk}

\begin{rmk}
Note that the definition of a Hom-group in~\cite{HG2} consists of the axiom $\Phi(e_{\Phi})=e_{\Phi}$. In Proposition~\ref{pro:0}, we show that this axiom is redundant in the regular case. Let us recall the Hom-invertibility condition in the definition of a Hom-group $(G,\Phi)$ in~\cite{LGMT}: for each $x\in G$, there exists a positive integer~$k$ such that
\[\Phi^{k}\big(x\diamond x^{-1}\big)=\Phi^k\big(x^{-1}\diamond x\big)=e_{\Phi},\]
and the smallest such integer $k$ is called the invertibility index of $x\in G$. In the regular case, it is immediate to see that the Hom-invertibility condition is equivalent to the condition~(iv) in Definition~\ref{Hom-group}.
\end{rmk}

\begin{ex}Let $(G,\mu,e)$ be a group and $\phi\colon G\rightarrow G$ be a group automorphism. Then the tuple $(G,\mu_{\phi},e,\phi)$ with the product $\mu_{\phi}=\phi\circ\mu,$ is a Hom-group. In particular, the tuple $(\mathbb{R},+, 0,\Id )$ is a Hom-group, which will be used in our later definition of one-parameter Hom-Lie subgroups.
\end{ex}

It is straightforward to obtain the following properties, which were also given in~\cite{CatDes}.
\begin{pro}\label{pro:0}
Let $(G,\diamond,e_\Phi,\Phi)$ be a Hom-group. Then we have the following properties.
\begin{itemize}\itemsep=0pt
\item [$(i)$] $\Phi(e_{\Phi})=e_{\Phi};$
\item [$(ii)$] for each $x\in G$, there exists a unique inverse $x^{-1}_\Phi$ such that
 \begin{equation*}
 x\diamond x^{-1}_\Phi=x^{-1}_\Phi\diamond x=e_\Phi;
 \end{equation*}
\item [$(iii)$] $(x\diamond y)^{-1}=y^{-1}\diamond x^{-1}$, $\forall\, x,y\in G$.
\end{itemize}
\end{pro}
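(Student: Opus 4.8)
The plan is to verify the three properties directly from the Hom-group axioms (i)--(iv) in Definition~\ref{Hom-group}, proceeding in the order listed since each item can lean on the previous ones.

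For $(i)$, I would exploit the Hom-unit axiom (iii) together with the multiplicativity of $\Phi$ in axiom (i). Applying $\Phi$ to the defining relation $e_\Phi\diamond e_\Phi=\Phi(e_\Phi)$ gives $\Phi(e_\Phi)\diamond\Phi(e_\Phi)=\Phi(\Phi(e_\Phi))$; on the other hand, using (iii) again with $x=\Phi(e_\Phi)$ we have $\Phi(e_\Phi)\diamond e_\Phi=\Phi(\Phi(e_\Phi))$. Alternatively, and more cleanly: pick any $x\in G$, then $\Phi(x)=x\diamond e_\Phi$, so $\Phi(\Phi(x))=\Phi(x)\diamond\Phi(e_\Phi)$ by (i). But also $\Phi(\Phi(x))=\Phi(x)\diamond e_\Phi$ by (iii) applied to $\Phi(x)$. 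Thus $\Phi(x)\diamond\Phi(e_\Phi)=\Phi(x)\diamond e_\Phi$, and since $\Phi$ is a bijection, ranging over all $x$ shows $y\diamond\Phi(e_\Phi)=y\diamond e_\Phi$ for all $y\in G$; a left-cancellation argument (which itself follows from the existence of inverses in (iv) and Hom-associativity in (ii)) then yields $\Phi(e_\Phi)=e_\Phi$. I anticipate this is the step requiring the most care, because cancellation is not an axiom and must be derived: from $a\diamond b=a\diamond c$ one multiplies on the left by $a^{-1}$, uses Hom-associativity $\Phi(a^{-1})\diamond(a\diamond b)=(a^{-1}\diamond a)\diamond\Phi(b)=e_\Phi\diamond\Phi(b)=\Phi^2(b)$, and similarly with $c$, so $\Phi^2(b)=\Phi^2(c)$, hence $b=c$ by bijectivity of $\Phi$.

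For $(ii)$, uniqueness of the inverse, suppose $x\diamond y=x\diamond y'=e_\Phi$ with both $y,y'$ playing the role of inverses. The cancellation lemma established above immediately forces $y=y'$, so the inverse guaranteed by (iv) is unique; write it $x^{-1}_\Phi$. For $(iii)$, the identity $(x\diamond y)^{-1}=y^{-1}\diamond x^{-1}$, I would compute $(x\diamond y)\diamond(y^{-1}\diamond x^{-1})$ and show it equals $e_\Phi$, then invoke uniqueness from $(ii)$. Using Hom-associativity carefully: $(x\diamond y)\diamond(y^{-1}\diamond x^{-1})$ should be massaged, via inserting $\Phi$'s appropriately, into $\Phi(x)\diamond\big((y\diamond y^{-1})\diamond\Phi(x^{-1})\big)$-type expressions; since $y\diamond y^{-1}=e_\Phi$ and $e_\Phi\diamond\Phi(x^{-1})=\Phi^2(x^{-1})$, together with $\Phi(x)\diamond\Phi(x^{-1})=\Phi(x\diamond x^{-1})=\Phi(e_\Phi)=e_\Phi$ from part $(i)$, one reduces the whole product to $e_\Phi$ (possibly after applying $\Phi^{-1}$ to strip off leftover twists). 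The bookkeeping of where the $\Phi$'s land is the only subtlety here, and part $(i)$ is exactly what is needed to close it. Since the excerpt says these properties are ``straightforward to obtain'' and already appear in~\cite{CatDes}, I would keep the write-up brief, emphasizing only the cancellation lemma as the engine driving all three items.
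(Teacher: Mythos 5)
The paper gives no proof of this proposition --- it is declared ``straightforward'' and attributed to \cite{CatDes} --- so there is no argument of the authors' to measure you against; judged on its own, your proposal is correct in strategy and essentially complete. The left-cancellation lemma you isolate (from $a\diamond b=a\diamond c$ deduce $\Phi\big(a^{-1}\big)\diamond(a\diamond b)=\big(a^{-1}\diamond a\big)\diamond\Phi(b)=\Phi^2(b)$, likewise for $c$, then cancel the bijection $\Phi^2$) is exactly the right engine, and it cleanly delivers items $(i)$ and $(ii)$. For $(i)$ one could argue even more directly that $\Phi(e_\Phi)$ is itself a Hom-unit, since $\Phi(x)\diamond\Phi(e_\Phi)=\Phi(x\diamond e_\Phi)=\Phi(\Phi(x))$ and $\Phi$ is onto, and then invoke the uniqueness of the Hom-unit postulated in axiom (iii); but your cancellation route is equally valid.

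The one place your sketch would not survive being fleshed out literally is the intermediate expression in $(iii)$: the quantity $\Phi(x)\diamond\big((y\diamond y^{-1})\diamond\Phi\big(x^{-1}\big)\big)$ collapses to $\Phi\big(x\diamond\Phi\big(x^{-1}\big)\big)$, which is not $e_\Phi$ in general and is not equal to $(x\diamond y)\diamond\big(y^{-1}\diamond x^{-1}\big)$. The correct bookkeeping is
\[
(x\diamond y)\diamond\big(y^{-1}\diamond x^{-1}\big)
=\big(\Phi^{-1}(x\diamond y)\diamond y^{-1}\big)\diamond\Phi\big(x^{-1}\big)
=\big(x\diamond\Phi^{-1}\big(y\diamond y^{-1}\big)\big)\diamond\Phi\big(x^{-1}\big)
=\Phi(x)\diamond\Phi\big(x^{-1}\big)=e_\Phi,
\]
where the first equality is Hom-associativity read from right to left, the second uses $\Phi^{-1}(x\diamond y)=\Phi^{-1}(x)\diamond\Phi^{-1}(y)$ followed by Hom-associativity again, and the final steps use $\Phi^{-1}(e_\Phi)=e_\Phi$ and $\Phi(e_\Phi)=e_\Phi$ from part $(i)$ --- so your observation that $(i)$ is needed to close $(iii)$ is on target. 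A symmetric computation on the other side, or a single application of your cancellation lemma to $a\diamond b=e_\Phi=a\diamond a^{-1}$, then identifies $y^{-1}\diamond x^{-1}$ with the unique inverse of $x\diamond y$. With that one repair the proof is complete.
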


\begin{defi}
Let $(G,\diamond_G,e_\Phi,\Phi)$ and $(H,\diamond_H,e_\Psi,\Psi)$ be two Hom-groups.
\begin{enumerate}\itemsep=0pt
\item[(a)] A {\it homomorphism} of Hom-groups is a map $f\colon G\lon H$ such that
 $f(e_\Phi)=e_\Psi$ and $f(x\diamond_Gy)=f(x)\diamond_H f(y)$ for all $x, y\in G$.

\item[(b)] A {\it weak homomorphism} of Hom-groups is a map $f\colon G\lon H$ such that
 $f(e_\Phi)=e_\Psi$ and
 $\Psi \circ f(x\diamond_G y)=\big(f\circ \Phi(x)\big)\diamond_H \big(f\circ \Phi(y)\big)$ for all $x,y\in G$.
\end{enumerate}
\end{defi}
Let us observe that for a homomorphism $f\colon (G,\diamond_G,e_\Phi,\Phi)\lon (H,\diamond_H,e_\Psi,\Psi)$, the commutativity condition: $\Psi\circ f=f\circ \Phi$ holds. It follows by the definition of a homomorphism and the identities: $\Phi(e_{\Phi})=e_{\Phi},$ and $\Psi(e_{\Psi})=e_{\Psi}$. Furthermore, any homomorphism of Hom-groups is also a weak homomorphism, however, the converse may not be true.

\section{Hom-Lie groups and Hom-Lie algebras}\label{Homliealgebra}

Let $\mathbb{R}$ be the field of real numbers. From here onwards, we consider all manifolds, vector spaces over the field $\mathbb{R}$, and all the linear maps are considered to be $\mathbb{R}$-linear unless otherwise stated.
\subsection{Hom-Lie groups}

\begin{defi}\label{def:HL-group}
A real Hom-Lie group is a Hom-group $(G,\diamond,e_{\Phi},\Phi)$, in which $G$ is also a smooth real manifold, the map $\Phi\colon G\rightarrow G$ is a diffeomorphism, and the Hom-group operations (product and inversion) are smooth maps with respect to the topology of~$G$.
\end{defi}

\begin{ex}\label{rmk:00}Let $(G,\cdot)$ be a Lie group with identity $e$ and $\Phi\colon (G,\cdot)\rightarrow (G,\cdot)$ be an automorphism. Then the tuple $(G,\diamond,e_\Phi=e,\Phi)$ is a Hom-Lie group, where the product $\diamond$ is defined by
\[ a\diamond b= \Phi(a)\cdot \Phi(b),\qquad\forall\, a, b \in G.\]
\end{ex}

Let $V$ be a vector space and $\beta\in {\rm GL}(V)$. Then let us define a product $\diamond\colon {\rm GL}(V)\times {\rm GL}(V)\longrightarrow {\rm GL}(V)$ by
\begin{eqnarray}\label{eq:mu1}
 A\diamond B =\beta \circ A \circ\beta^{-1}\circ B \circ\beta^{-1},\qquad \forall\, A,B\in {\rm GL}(V).
\end{eqnarray}

\begin{pro}\label{HGrp:GL(V)}
 The tuple $({\rm GL}(V),\diamond,\beta,\Ad_{\beta})$ is a Hom-Lie group, where the product $\diamond$ is given by \eqref{eq:mu1}, the Hom-unit is $\beta$, and the map $\Ad_{\beta}\colon {\rm GL}(V)\rightarrow {\rm GL}(V)$ is defined by
 \[\Ad_\beta(A)=\beta\circ A\circ \beta^{-1},\qquad \forall\, A\in {\rm GL}(V).\]
\end{pro}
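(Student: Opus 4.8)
The plan is to verify directly that $({\rm GL}(V),\diamond,\beta,\Ad_\beta)$ satisfies the four axioms of Definition~\ref{Hom-group} together with the smoothness requirements of Definition~\ref{def:HL-group}. Since ${\rm GL}(V)$ is an open subset of the vector space $\gl(V)$, it is a smooth manifold; the product $\diamond$ in~\eqref{eq:mu1} is a polynomial expression in its arguments (with the fixed invertible maps $\beta,\beta^{-1}$), hence smooth, and $\Ad_\beta$ is linear and invertible with inverse $\Ad_{\beta^{-1}}$, hence a diffeomorphism of ${\rm GL}(V)$. So the only real content is the algebraic identities, each of which follows by expanding both sides into strings of compositions and cancelling adjacent factors $\beta^{-1}\circ\beta=\Id$.

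First I would check the multiplicativity axiom~(i): both $\Ad_\beta(A\diamond B)$ and $\Ad_\beta(A)\diamond\Ad_\beta(B)$ expand to $\beta\circ\beta\circ A\circ\beta^{-1}\circ B\circ\beta^{-1}\circ\beta^{-1}$. For Hom-associativity~(ii), expanding $\Ad_\beta(A)\diamond(B\diamond C)$ and $(A\diamond B)\diamond\Ad_\beta(C)$ and performing the cancellations turns both into $\beta\circ\beta\circ A\circ\beta^{-1}\circ B\circ\beta^{-1}\circ C\circ\beta^{-1}\circ\beta^{-1}$. For axiom~(iii), a one-line computation gives $A\diamond\beta=\beta\circ A\circ\beta^{-1}=\beta\diamond A$, so $\beta$ is a Hom-unit (and indeed $\Ad_\beta(A)=A\diamond\beta$, matching the axiom $x\diamond e_\Phi=\Phi(x)$); uniqueness is automatic, since two Hom-units $e_1,e_2$ would satisfy $\Ad_\beta(e_2)=e_1\diamond e_2=\Ad_\beta(e_1)$, forcing $e_1=e_2$ by injectivity of $\Ad_\beta$. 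For axiom~(iv), the candidate Hom-inverse of $A$ is $\beta\circ A^{-1}\circ\beta$, with $A^{-1}$ the ordinary inverse in ${\rm GL}(V)$; one checks $A\diamond(\beta\circ A^{-1}\circ\beta)=\beta=(\beta\circ A^{-1}\circ\beta)\diamond A$, and since $A\mapsto\beta\circ A^{-1}\circ\beta$ is smooth on ${\rm GL}(V)$, the inversion map of the Hom-group is smooth as well.

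I do not expect any genuine obstacle: the statement is essentially a bookkeeping exercise in conjugations by $\beta$. The only mildly non-obvious point is identifying the correct formula for the Hom-inverse, namely $\beta\circ A^{-1}\circ\beta$ rather than $A^{-1}$ or $\beta^{-1}\circ A^{-1}\circ\beta^{-1}$; once that is in hand every verification is a short direct computation. As a consistency check, one may note that this Hom-Lie group should differentiate at the Hom-unit $\beta$ to the Hom-Lie algebra $(\gl(V),[\cdot,\cdot]_\beta,\Ad_\beta)$ of Proposition~\ref{pro:Hom-Lie}, whose bracket and structure map have exactly the same shape as the group data above; this is precisely what is carried out in Section~\ref{Integrationgl}.
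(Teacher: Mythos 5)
Your proposal is correct and follows essentially the same route as the paper: a direct verification of the four Hom-group axioms by expanding compositions and cancelling $\beta^{-1}\circ\beta$, with the same Hom-inverse $\beta\circ A^{-1}\circ\beta$. The extra observations you add (smoothness of $\diamond$ and of inversion, uniqueness of the Hom-unit) are correct and only make explicit points the paper leaves implicit.
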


\begin{proof} For all $A, B\in {\rm GL}(V)$, we have
\begin{align*}
\Ad_{\beta}( A\diamond B)&= \Ad_{\beta}\big(\beta\circ A\circ\beta^{-1}\circ B \circ\beta^{-1}\big)
 = \beta^2\circ A\circ\beta^{-1}\circ B \circ\beta^{-2}\\
 &= \beta\circ \Ad_{\beta}(A)\circ\beta^{-1}\circ \Ad_{\beta}(B) \circ\beta^{-1}
 = \Ad_{\beta}(A)\diamond \Ad_{\beta}(B).
\end{align*}
Thus, condition (i) in Definition \ref{Hom-group} holds.
For all $A, B, C\in {\rm GL}(V)$, it easily follows that
\begin{align*}
(A\diamond B)\diamond(\Ad_{\beta}(C))&= \big(\beta \circ A \circ\beta^{-1}\circ B \circ\beta^{-1}\big)\diamond\big(\beta\circ C\circ \beta^{-1}\big)\\
 &= \beta^2\circ A \circ\beta^{-1}\circ B \circ\beta^{-1}\circ C\circ \beta^{-2}\\
 & =\beta\circ\Ad_{\beta}(A)\circ\beta^{-1}\circ(B\diamond C)\circ\beta^{-1}\\
 &=\Ad_{\beta}(A)\diamond(B\diamond C),
\end{align*}
which implies that the product $\diamond$ is Hom-associative. Next, we have
\begin{gather*}
A\diamond \beta=\beta\circ A\circ\beta^{-1}\circ\beta\circ\beta^{-1}=\Ad_{\beta}(A),\qquad\forall\, A\in {\rm GL}(V).
\end{gather*}
Similarly,
\[\beta\diamond A=\Ad_{\beta}(A),\qquad\forall\, A\in {\rm GL}(V).\]
Therefore, $\beta$ is the Hom-unit. Finally, we have the following expression
\begin{gather*}
A\diamond \big(\beta\circ A^{-1}\circ \beta\big) = \beta \circ A \circ\beta^{-1}\circ \big(\beta\circ A^{-1}\circ \beta\big) \circ\beta^{-1}=\beta=\big(\beta\circ A^{-1}\circ \beta\big)\diamond A,
\end{gather*}
for any $A\in {\rm GL}(V)$, i.e., $\beta\circ A^{-1}\circ \beta$ is the Hom-inverse of $A$. Hence, the tuple $({\rm GL}(V),\diamond,\beta,\Ad_{\beta})$ is a Hom-group.
\end{proof}

\begin{ex}
Let $M$ be a smooth manifold. Let us denote by $\Diff(M)$, the set of diffeomorphisms of $M$. If $\iota\in \Diff(M)$, then the tuple $(\Diff(M),\diamond,\iota,\Ad_{\iota})$ is a Hom-Lie group, where
\begin{itemize}\itemsep=0pt
\item [(i)] the product $\diamond$ is given by the following equation$:$
\[ f\diamond g=\iota\circ f\circ\iota^{-1}\circ g\circ\iota^{-1},\qquad\forall\, f, g\in \Diff(M);\]
\item [(ii)] the Hom-unit is $\iota\in \Diff(M)$;
\item [(iii)] the map $\Ad_{\iota}\colon \Diff(M)\rightarrow \Diff(M)$ is defined by $\Ad_{\iota}(f)=\iota\circ f\circ\iota^{-1}$, for all $~f\in \Diff(M)$.
\end{itemize}
\end{ex}

Let $(G,\diamond,e_\Phi,\Phi)$ be a Hom-Lie group and $TG$ be the tangent bundle of the manifold $G$. Let us denote by $\Phi^{!}TG$, the pullback bundle of the tangent bundle $TG$ along the diffeomorphism $\Phi\colon G\rightarrow G$. Then we have
the following one-to-one correspondence.

\begin{lem}\label{lem:i1}
There is a one-to-one correspondence between the space of sections of the tangent bundle $TG$ $(\mbox{i.e.}, \Gamma(TG))$ and the space of sections of the pullback bundle $\Phi^{!}TG$ $\big($i.e., $\Gamma \big(\Phi^{!}TG\big)\big)$.
\end{lem}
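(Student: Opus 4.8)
The plan is to exhibit an explicit bijection using the fact that $\Phi\colon G\to G$ is a diffeomorphism. Recall that by definition a section of the pullback bundle $\Phi^{!}TG$ assigns to each point $p\in G$ a tangent vector in $(\Phi^{!}TG)_p = T_{\Phi(p)}G$; that is, a section of $\Phi^{!}TG$ is a smooth map $X\colon G\to TG$ with $X(p)\in T_{\Phi(p)}G$, equivalently $\pi_{TG}\circ X = \Phi$, where $\pi_{TG}\colon TG\to G$ is the bundle projection. A section of $TG$ in the ordinary sense is a vector field $Y$ with $\pi_{TG}\circ Y = \Id_G$.

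First I would define a map $\Gamma(TG)\to\Gamma(\Phi^{!}TG)$ by sending a vector field $Y$ to the section $p\mapsto Y(\Phi(p))$, i.e.\ $Y\mapsto Y\circ\Phi$. Since $Y(\Phi(p))\in T_{\Phi(p)}G = (\Phi^{!}TG)_p$, this lands in the correct fibre, and smoothness is clear because $Y$ and $\Phi$ are smooth. Conversely, given a section $X\in\Gamma(\Phi^{!}TG)$, define a vector field by $p\mapsto X(\Phi^{-1}(p))$, i.e.\ $X\mapsto X\circ\Phi^{-1}$; here $X(\Phi^{-1}(p))\in T_{\Phi(\Phi^{-1}(p))}G = T_pG$, so this is a genuine section of $TG$, and smoothness follows since $\Phi^{-1}$ is smooth ($\Phi$ being a diffeomorphism). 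Then I would check that these two assignments are mutually inverse: $(Y\circ\Phi)\circ\Phi^{-1} = Y$ and $(X\circ\Phi^{-1})\circ\Phi = X$, which is immediate. This establishes the claimed one-to-one correspondence, and in fact it is an isomorphism of $C^\infty(G)$-modules.

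There is essentially no serious obstacle here; the only point requiring care is being precise about what a section of $\Phi^{!}TG$ is (a map covering $\Phi$, not $\Id_G$) and verifying that the candidate maps respect the fibre constraints $X(p)\in T_{\Phi(p)}G$. The invertibility of $\Phi$ — which is part of the standing regularity hypothesis and built into Definition~\ref{def:HL-group} — is exactly what makes the inverse map well defined, so it is worth flagging that this lemma genuinely uses the diffeomorphism assumption and would fail for a non-invertible structure map.
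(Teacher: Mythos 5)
Your proof is correct and follows essentially the same route as the paper: both identify sections of $\Phi^{!}TG$ with smooth maps $G\to TG$ covering $\Phi$ and realize the correspondence as $Y\mapsto Y\circ\Phi$, with invertibility of $\Phi$ doing all the work. You are merely more explicit about the inverse $X\mapsto X\circ\Phi^{-1}$, which is a harmless (and welcome) addition.
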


\begin{proof} Let $X\in \Gamma(TG)$, then define a smooth map $x\colon G\rightarrow TG$ by $x=X\circ\Phi$. Let us consider the set $\Gamma_{\Phi^{!}}(TG)=\{x\colon G\rightarrow TG\,|\, x=X\circ\Phi\}$. Since the map $\Phi\colon G\rightarrow G$ is a diffeomorphism, there is a one-to-one correspondence between the sets $\Gamma(TG)$ and $\Gamma_{\Phi^{!}}(TG)$:
\[
 \xymatrix{
 \Phi^{!}TG \ar[r]^{{\rm pr}}
 & TG \\
 G \ar[u]_{X^{!}} \ar@{.>}[ur]|-{x} \ar[r]_{\Phi}
 & G. \ar[u]^{X} }
\]
Note that there exists a unique $X^{!}$ such that $x={\rm pr}\circ X^{!}$. Hence, there is a one-to-one correspondence between $\Gamma (TG)$ with $\Gamma \big(\Phi^{!}TG\big)$.
\end{proof}

For $X\in \Gamma (TG)$, we denote the corresponding pullback section by $X^{!}\in \Gamma \big(\Phi^{!}TG\big)$. Through this paper, we identify $X^{!}\in \Gamma \big(\Phi^{!}TG\big)$ by $x\in \Gamma_{\Phi^{!}}(TG)$. Let us observe that if we define
\begin{equation}
 \label{eq:impor1}x(f)=X(f)\circ\Phi, \qquad \forall\, f \in C^{\infty}(G),
 \end{equation}
 then $\Gamma_{\Phi^{!}}(TG)$ can be identified with the set of $(\Phi^{*}, \Phi^{*})$-derivations $\Der_{\Phi^{*}, \Phi^{*}}(C^{\infty}(G))$ on $C^{\infty}(G)$, i.e., for all $f,g\in C^{\infty}(G), x\in \Gamma_{\Phi^{!}}(TG)$, we have
\begin{equation}
\label{eq:morcon3}x(fg)=x(f)\Phi^{*}(g)+\Phi^{*}(f)x(g).
\end{equation}
Thus, the space of sections $\Gamma \big(\Phi^{!}TG\big)$ can be identified with the space of $(\Phi^{*}, \Phi^{*})$-derivations of on $C^{\infty}(G)$, i.e., $\Der_{\Phi^{*}, \Phi^{*}}(C^{\infty}(G))$. In the following theorem, we define a Hom-Lie algebra structure on the space of sections $\Gamma \big(\Phi^{!}TG\big)$.

\begin{thm}
Let $G$ be a smooth manifold. Then $\big(\Gamma \big(\Phi^{!}TG\big), [\cdot ,\cdot]_{\Phi} , \phi\big)$ is a Hom-Lie algebra, where the Hom-Lie bracket $[\cdot ,\cdot]_{\Phi}$ and the map $\phi\colon \Gamma \big(\Phi^{!}TG\big)\rightarrow \Gamma \big(\Phi^{!}TG\big)$ are defined as follows:
\begin{gather}
 \label{eq:morcon4}\phi(x)=\big(\Phi^{-1}\big)^{*}\circ x\circ \Phi^{*},\\
 \label{eq:morcon5}[x, y]_{\Phi}=\big(\Phi^{-1}\big)^{*}\circ x \circ\big(\Phi^{-1}\big)^{*} \circ y \circ \Phi^{*}-\big(\Phi^{-1}\big)^{*}\circ y\circ \big(\Phi^{-1}\big)^{*}\circ x\circ \Phi^{*},
\end{gather}
for any $x,y\in\Gamma \big(\Phi^{!}TG\big)$.
\end{thm}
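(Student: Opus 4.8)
\emph{Proof strategy.} The plan is to transport the whole problem to the Lie algebra of ordinary vector fields on $G$ twisted by an automorphism, where the Hom-Lie axioms become essentially automatic. Write $\psi:=\Phi^{*}$ for the induced algebra automorphism of $C^{\infty}(G)$, so that $\psi^{-1}=(\Phi^{-1})^{*}$. By the identification recorded just before the theorem, $\Gamma(\Phi^{!}TG)$ is the space $\Der_{\Phi^{*},\Phi^{*}}(C^{\infty}(G))$ of linear maps $x\colon C^{\infty}(G)\to C^{\infty}(G)$ satisfying \eqref{eq:morcon3}. First I would record the bijection
\[
\Theta\colon \Gamma(TG)\longrightarrow \Der_{\Phi^{*},\Phi^{*}}(C^{\infty}(G)),\qquad \Theta(X)=\psi\circ X,
\]
whose inverse is $x\mapsto \psi^{-1}\circ x$; a one-line check shows that $\psi\circ X$ is a $(\psi,\psi)$-derivation whenever $X$ is an ordinary derivation, and conversely that $\psi^{-1}\circ x$ is an ordinary derivation — hence, $G$ being a smooth manifold, a smooth vector field — whenever $x$ is a $(\psi,\psi)$-derivation. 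This $\Theta$ is exactly the identification $x=X\circ\Phi$ of Lemma~\ref{lem:i1}.

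Next I would see what the two structure maps become in the vector-field picture. Substituting $x=\psi\circ X$ and $y=\psi\circ Y$ into \eqref{eq:morcon4} and \eqref{eq:morcon5} and cancelling the adjacent factors $\psi^{-1}\circ\psi$, one finds $\phi(\Theta(X))=X\circ\psi$ and $[\Theta(X),\Theta(Y)]_{\Phi}=[X,Y]\circ\psi$, where $[X,Y]=X\circ Y-Y\circ X$ is the ordinary bracket of vector fields. Setting $\alpha:=\psi^{-1}\circ(\,\cdot\,)\circ\psi$ on $\Gamma(TG)$ — conjugation by the algebra automorphism $\psi$, which carries derivations to derivations and respects the commutator, hence is a Lie algebra automorphism of $(\Gamma(TG),[\cdot,\cdot])$ — these become $\phi\circ\Theta=\Theta\circ\alpha$ and $[\Theta(X),\Theta(Y)]_{\Phi}=\Theta(\alpha[X,Y])$. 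So $\Theta$ is an isomorphism of triples from $\big(\Gamma(TG),\alpha\circ[\cdot,\cdot],\alpha\big)$ onto $\big(\Gamma(\Phi^{!}TG),[\cdot,\cdot]_{\Phi},\phi\big)$, and it remains only to prove the former is a regular Hom-Lie algebra.

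That last step is Yau's twisting principle for the Lie algebra of vector fields, which I would include since it is short: $\alpha\circ[\cdot,\cdot]$ is bilinear and skew-symmetric; $\alpha$ is invertible (with inverse $\psi\circ(\,\cdot\,)\circ\psi^{-1}$) and preserves $\alpha\circ[\cdot,\cdot]$ because it is a Lie algebra morphism; and, using the morphism property again, the Hom-Jacobi identity for $\big(\Gamma(TG),\alpha\circ[\cdot,\cdot],\alpha\big)$ collapses to $\alpha^{2}$ applied to the ordinary Jacobiator $\sum_{\mathrm{cyc}}[X,[Y,Z]]$, which vanishes. If one prefers to bypass $\Theta$, the same conclusion comes out of a direct expansion of \eqref{eq:morcon4}--\eqref{eq:morcon5}: first verify that $[x,y]_{\Phi}$ is again a $(\psi,\psi)$-derivation, then write out the three cyclic terms of the Hom-Jacobi identity and watch them cancel in pairs. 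I expect the only genuinely error-prone part to be bookkeeping — keeping the composition order of the operators and the roles of $\Phi^{*}$ versus $(\Phi^{-1})^{*}$ straight — so I would pin down all conventions once when defining $\Theta$ and then argue purely formally.
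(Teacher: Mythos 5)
Your argument is correct, and it takes a genuinely different route from the paper's. The paper proves the theorem by brute force: it expands the operator compositions in \eqref{eq:morcon4} and \eqref{eq:morcon5} against products $fg$ to check the $(\Phi^{*},\Phi^{*})$-derivation property \eqref{eq:morcon3}, then writes out $\phi[x,y]_{\Phi}$, $[\phi(x),\phi(y)]_{\Phi}$ and all three cyclic Hom-Jacobi terms explicitly and cancels them by hand. You instead conjugate the whole structure back to ordinary vector fields: under the bijection $\Theta(X)=\Phi^{*}\circ X$ (which is exactly the identification $x=X\circ\Phi$ of Lemma~\ref{lem:i1}), the maps \eqref{eq:morcon4}--\eqref{eq:morcon5} become $\Theta\circ\alpha$ and $\Theta\circ\alpha\circ[\cdot,\cdot]$ for the Lie algebra automorphism $\alpha=(\Phi^{*})^{-1}\circ(\,\cdot\,)\circ\Phi^{*}$ of $(\Gamma(TG),[\cdot,\cdot])$, and the claim reduces to the Yau twist of a Lie algebra along an automorphism, whose Hom-Jacobi identity collapses to $\alpha^{2}$ of the ordinary Jacobiator. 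Your cancellations $\phi(\Theta(X))=X\circ\Phi^{*}=\Theta(\alpha(X))$ and $[\Theta(X),\Theta(Y)]_{\Phi}=[X,Y]\circ\Phi^{*}=\Theta(\alpha[X,Y])$ check out. What your approach buys is conceptual transparency (the structure is manifestly a twist of the usual vector-field Lie algebra, which also explains Lemma~\ref{lem:Des1} later in the paper) and robustness against exactly the bookkeeping errors you flag -- the paper's displayed computations in fact contain several such slips ($\Phi$ versus $\Phi^{-1}$, and a repeated $z$ in the expansion of $[\phi(z),[x,y]_{\Phi}]_{\Phi}$) that your route never has to touch. What the paper's direct computation buys is self-containedness: it needs neither the identification of abstract derivations of $C^{\infty}(G)$ with smooth vector fields nor the twisting principle as a separate input. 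The one point you should state rather than merely gesture at is that $\psi^{-1}\circ x$, being an ordinary derivation of $C^{\infty}(G)$, really is a smooth vector field; this is standard but is the only place where smoothness of $G$ enters your argument.
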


\begin{proof} Let $x, y \in \Gamma \big(\Phi^{!}TG\big)$ and $f,g \in C^{\infty}(G)$. Then by (\ref{eq:morcon3}) and (\ref{eq:morcon4}), we get
\begin{align*}
\phi(x)(fg)&=\big(\Phi^{-1}\big)^{*}\circ x\circ\Phi^{*}(fg)\\
&=\big(x(f\circ\Phi)\circ\Phi^{-1}\big)(g\circ\Phi)+(f\circ\Phi)\big(x(g\circ\Phi)\circ\Phi^{-1}\big)\\
&=\phi(x)(f)(g\circ\Phi)+(f\circ\Phi)\phi(x)(g)\\
& =\phi(x)(f)\Phi^{*}(g)+\Phi^{*}(f)\phi(x)(g),
\end{align*}
which implies that $\phi(x)$ is a $(\Phi^{*}, \Phi^{*})$-derivation on $C^{\infty}(G)$ and hence, $\phi(x) \in \Gamma \big(\Phi^{!}TG\big)$. Next, for any $x, y \in \Gamma \big(\Phi^{!}TG\big)$ and $f,g \in C^{\infty}(G)$, we have
\begin{gather*}
[x, y]_{\Phi}(fg)= \big(\Phi^{-1}\big)^{*}\circ x\circ\big(\Phi^{-1}\big)^{*}\circ y\circ \Phi^{*}(fg)-\big(\Phi^{-1}\big)^{*}\circ y\circ\big(\Phi^{-1}\big)^{*}\circ x\circ\Phi^{*}(fg)\\
\hphantom{[x, y]_{\Phi}(fg)}{} =x\big(y((f\circ\Phi)(g\circ\Phi))\circ\Phi^{-1}\big)\circ\Phi^{-1}-y\big(x((f\circ\Phi)(g\circ\Phi))\circ\Phi^{-1}\big)\circ\Phi^{-1}\\
\hphantom{[x, y]_{\Phi}(fg)}{} =x\big((y(f\circ\Phi)\big(g\circ\Phi^{2}\big)+\big(f\circ \Phi^{2}\big)y(g\circ\Phi))\circ\Phi^{-1}\big)\circ\Phi^{-1}\\
\hphantom{[x, y]_{\Phi}(fg)=}{} -y\big((x(f\circ\Phi)\big(g\circ\Phi^{2}\big)+\big(f\circ\Phi^{2}\big)x\big(g\circ\Phi^{-1}\big)\big)\circ\Phi^{-1})\circ\Phi^{-1}\\
\hphantom{[x, y]_{\Phi}(fg)}{} =\big(x\big(y(f\circ\Phi)\circ\Phi^{-1}\big)\circ\Phi^{-1}\big)(g\circ\Phi)-\big(y\big(x(f\circ\Phi)\circ\Phi^{-1}\big) \circ\Phi^{-1}\big)(g\circ\Phi)\\
\hphantom{[x, y]_{\Phi}(fg)=}{} +(f\circ\Phi)\big(x\big(y(g\circ\Phi)\circ\Phi^{-1}\big)\circ\Phi^{-1}\big) -(f\circ\Phi)\big(y\big(x(g\circ\Phi)\circ\Phi^{-1}\big)\circ\Phi^{-1}\big),
\end{gather*}
and
\begin{gather*}
 [x, y]_{\Phi}(f)\Phi^{*}(g)+\Phi^{*}(f)[x, y]_{\Phi}(g)\\
\qquad{} = \big(x\big(y(f\circ\Phi)\circ\Phi^{-1}\big)\circ\Phi^{-1}\big)(g\circ\Phi) -\big(y\big(x(f\circ\Phi)\circ\Phi^{-1}\big)\circ\Phi\big)(g\circ\Phi)\\
\qquad\quad {} +(f\Phi)\big(x\big(y(g\Phi^{-1})\Phi^{-1}\big)\Phi\big)-(f\Phi)\big(y\big(x(g\Phi^{-1})\Phi^{-1}\big)\Phi\big).
\end{gather*}
i.e.,
\begin{equation*}
[x, y]_{\Phi}(fg)=[x, y]_{\Phi}(f)\Phi^{*}(g)+\Phi^{*}(f)[x, y]_{\Phi}(g),
\end{equation*}
which implies that $[x, y]_{\Phi} \in \Gamma \big(\Phi^{!}TG\big)$.
Moreover, by~\eqref{eq:morcon4} and~\eqref{eq:morcon5}, we get the following expressions:
\[ \phi[x, y]_{\Phi}=\big(\Phi^{-2}\big)^{*}\circ x\circ\big(\Phi^{-1}\big)^{*}\circ y\circ\big(\Phi^{2}\big)^{*}-\big(\Phi^{-2}\big)^{*}\circ y\circ\big(\Phi^{-1}\big)^{*}\circ x\circ\big(\Phi^{2}\big)^{*},\]
and
\[ [\phi(x), \phi(y)]_{\Phi}=\big(\Phi^{-2}\big)^{*}\circ x\circ\big(\Phi^{-1}\big)^{*}\circ y\circ\big(\Phi^{2}\big)^{*}-\big(\Phi^{-2}\big)^{*}\circ y\circ\big(\Phi^{-1}\big)^{*}\circ x\circ\big(\Phi^{2}\big)^{*},\]
 which, in turn, implies that $\phi[x, y]_{\Phi}=[\phi(x), \phi(y)]_{\Phi}$.

Finally, we have
\begin{gather*}
[\phi(x),[y,z]_{\Phi}]_{\Phi}= \big(\Phi^{-2}\big)^{*}\circ x\circ\big(\Phi^{-1}\big)^{*}\circ y\circ\big(\Phi^{-1}\big)^{*}\circ z\circ\big(\Phi^{2}\big)^{*}\\
\hphantom{[\phi(x),[y,z]_{\Phi}]_{\Phi}=}{}
-\big(\Phi^{-2}\big)^{*}\circ x\circ\big(\Phi^{-1}\big)^{*}\circ z\circ\big(\Phi^{-1}\big)^{*}\circ y\circ\big(\Phi^{2}\big)^{*}\\
\hphantom{[\phi(x),[y,z]_{\Phi}]_{\Phi}=}{}
-\big(\Phi^{-2}\big)^{*}\circ y\circ\big(\Phi^{-1}\big)^{*}\circ z\circ\big(\Phi^{-1}\big)^{*}\circ x\circ\big(\Phi^{2}\big)^{*}\\
\hphantom{[\phi(x),[y,z]_{\Phi}]_{\Phi}=}{}
+\big(\Phi^{-2}\big)^{*}\circ z\circ\big(\Phi^{-1}\big)^{*}\circ y\circ\big(\Phi^{-1}\big)^{*}\circ x\circ\big(\Phi^{2}\big)^{*}.
\end{gather*}
Similarly, we have
\begin{gather*}
 [\phi(y),[z,x]_{\Phi}]_{\Phi}= \big(\Phi^{-2}\big)^{*}\circ y\circ\big(\Phi^{-1}\big)^{*}\circ z\circ\big(\Phi^{-1}\big)^{*}\circ x\circ\big(\Phi^{2}\big)^{*}\\
 \hphantom{[\phi(y),[z,x]_{\Phi}]_{\Phi}=}{}
 -\big(\Phi^{-2}\big)^{*}\circ y\circ\big(\Phi^{-1}\big)^{*}\circ x\circ\big(\Phi^{-1}\big)^{*}\circ z\circ\big(\Phi^{2}\big)^{*}\\
\hphantom{[\phi(y),[z,x]_{\Phi}]_{\Phi}=}{}
-\big(\Phi^{-2}\big)^{*}\circ z\circ\big(\Phi^{-1}\big)^{*}\circ x\circ\big(\Phi^{-1}\big)^{*}\circ y\circ\big(\Phi^{2}\big)^{*}\\
\hphantom{[\phi(y),[z,x]_{\Phi}]_{\Phi}=}{}
+\big(\Phi^{-2}\big)^{*}\circ x\circ\big(\Phi^{-1}\big)^{*}\circ z\circ\big(\Phi^{-1}\big)^{*}\circ y\circ\big(\Phi^{2}\big)^{*},
\end{gather*}
and
\begin{gather*}
 [\phi(z),[x,y]_{\Phi}]_{\Phi} =\big(\Phi^{-2}\big)^{*}\circ z\circ\big(\Phi^{-1}\big)^{*}\circ x\circ\big(\Phi^{-1}\big)^{*}\circ y\circ\big(\Phi^{2}\big)^{*}\\
 \hphantom{[\phi(z),[x,y]_{\Phi}]_{\Phi} =}{}
 -\big(\Phi^{-2}\big)^{*}\circ z\circ\big(\Phi^{-1}\big)^{*}\circ y\circ\big(\Phi^{-1}\big)^{*}\circ x\circ\big(\Phi^{2}\big)^{*}\\
\hphantom{[\phi(z),[x,y]_{\Phi}]_{\Phi} =}{}
-\big(\Phi^{-2}\big)^{*}\circ y\circ\big(\Phi^{-1}\big)^{*}\circ z\circ\big(\Phi^{-1}\big)^{*}\circ z\circ\big(\Phi^{2}\big)^{*}\\
\hphantom{[\phi(z),[x,y]_{\Phi}]_{\Phi} =}{}
+\big(\Phi^{-2}\big)^{*}\circ y\circ\big(\Phi^{-1}\big)^{*}\circ x\circ\big(\Phi^{-1}\big)^{*}\circ z\circ\big(\Phi^{2}\big)^{*},
\end{gather*}
which implies that
\begin{equation*}
[\phi(x),[y,z]_{\Phi}]_{\Phi}+[\phi(y),[z,x]_{\Phi}]_{\Phi}+[\phi(z),[x,y]_{\Phi}]_{\Phi}=0.
\end{equation*}
Therefore, $\big(\Gamma\big(\Phi^{!}TG\big),[\cdot,\cdot]_\Phi,\phi\big)$ is a Hom-Lie algebra.
\end{proof}

\subsection{The Hom-Lie algebra of a Hom-Lie group}
Let $(G,\diamond, e_\Phi,\Phi)$ be a Hom-Lie group. For $a\in G$, let us define a smooth map
\[ l_{a} \colon \ G\rightarrow G \qquad \mbox{by}\quad l_{a}(b)=a\diamond b,\qquad\forall\, b \in G.
\]
Then the smooth map $l_{a}\colon G\rightarrow G$ is a diffeomorphism (by Definition~\ref{def:HL-group}).

\begin{defi}
Let $(G, \diamond, e_\Phi, \Phi)$ be a Hom-Lie group. A smooth section $x\in \Gamma \big(\Phi^{!}TG\big)$ is left-invariant if $x$ satisfies the following equation$:$
\begin{equation}
\label{eq:morcon6}x(f)(a)=x\big(f\circ l_{a}\circ\Phi^{-1}\big)(e_{\Phi}),\qquad\forall\, a \in G, f \in C^{\infty}(G).
\end{equation}
 \end{defi}

Let us denote by $\Gamma_{L}\big(\Phi^{!}TG\big)$, the space of all left-invariant sections of the pullback bundle $\Phi^{!}TG$. Next, we show that the space $\Gamma_{L}\big(\Phi^{!}TG\big)$ carries a Hom-Lie algebra structure. In fact, we prove that $\big(\Gamma_{L}\big(\Phi^{!}TG\big),[\cdot,\cdot]_{\Phi},\phi\big)$ is a Hom-Lie subalgebra of the Hom-Lie algebra $\big(\Gamma\big(\Phi^{!}TG\big),[\cdot,\cdot]_{\Phi},\phi\big)$.

\begin{lem} \label{lem:imp1}
Let $(G, \diamond, e_\Phi, \Phi)$ be a Hom-Lie group and $x\in \Gamma\big(\Phi^{!}TG\big)$ be a left-invariant section. Then we have
\begin{equation}
\label{eq:morcon7}x(f\circ\Phi)\circ\Phi^{-1}\circ l_{\Phi^{-1}(a)}=x(f\circ l_{a}),\qquad\forall\, a \in G.
\end{equation}
\end{lem}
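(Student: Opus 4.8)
The plan is to unwind the left-invariance condition~\eqref{eq:morcon6} and substitute the function $f\circ l_a$ in place of $f$, then simplify using the Hom-associativity of $\diamond$. First I would observe that, by definition of left-invariance applied to the function $f\circ\Phi$, for every $b\in G$ we have
\[
x(f\circ\Phi)(b)=x\big((f\circ\Phi)\circ l_{b}\circ\Phi^{-1}\big)(e_\Phi).
\]
Next I would evaluate both sides of the claimed identity~\eqref{eq:morcon7} at an arbitrary point $b\in G$. On the left we get $\big(x(f\circ\Phi)\circ\Phi^{-1}\circ l_{\Phi^{-1}(a)}\big)(b)=x(f\circ\Phi)\big(\Phi^{-1}(\Phi^{-1}(a)\diamond b)\big)$, and applying the displayed formula above with $\Phi^{-1}(\Phi^{-1}(a)\diamond b)$ in the role of $b$, this equals
\[
x\Big((f\circ\Phi)\circ l_{\Phi^{-1}(\Phi^{-1}(a)\diamond b)}\circ\Phi^{-1}\Big)(e_\Phi).
\]
On the right, again by left-invariance, $x(f\circ l_a)(b)=x\big((f\circ l_a)\circ l_b\circ\Phi^{-1}\big)(e_\Phi)$. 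So it suffices to show that the two argument functions agree, i.e.
\[
(f\circ\Phi)\circ l_{\Phi^{-1}(\Phi^{-1}(a)\diamond b)}\circ\Phi^{-1}=f\circ l_a\circ l_b\circ\Phi^{-1},
\]
or equivalently, composing with $\Phi$ on the right, that $\Phi\circ l_{\Phi^{-1}(\Phi^{-1}(a)\diamond b)}=l_a\circ l_b$ as maps $G\to G$.

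To verify this last identity I would evaluate at an arbitrary $c\in G$. The right side gives $l_a(l_b(c))=a\diamond(b\diamond c)$. The left side gives $\Phi\big(\Phi^{-1}(\Phi^{-1}(a)\diamond b)\diamond c\big)$; using axiom~(i) of Definition~\ref{Hom-group} ($\Phi$ is multiplicative) this equals $\Phi\big(\Phi^{-1}(\Phi^{-1}(a)\diamond b)\big)\diamond\Phi(c)=(\Phi^{-1}(a)\diamond b)\diamond\Phi(c)$. Now the Hom-associativity axiom~(ii), in the form $(x\diamond y)\diamond\Phi(z)=\Phi(x)\diamond(y\diamond z)$, applied with $x=\Phi^{-1}(a)$, $y=b$, $z=c$, turns this into $\Phi(\Phi^{-1}(a))\diamond(b\diamond c)=a\diamond(b\diamond c)$, which matches the right side. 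Since $c\in G$ was arbitrary, the two maps coincide, and tracing back the equalities above yields~\eqref{eq:morcon7} for all $a,b\in G$.

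The only mild subtlety — and the step I would be most careful about — is bookkeeping the $\Phi^{\pm1}$'s correctly: left-invariance is stated with the twisted translate $l_a\circ\Phi^{-1}$ rather than $l_a$, and one must insert the correct function ($f$ versus $f\circ\Phi$) at the correct stage so that the composition $l_a\circ l_b$, rather than some twisted variant, emerges. Once the reduction to the pointwise identity $\Phi\circ l_{\Phi^{-1}(\Phi^{-1}(a)\diamond b)}=l_a\circ l_b$ is made, the verification is a direct two-line application of axioms~(i) and~(ii) of the Hom-group structure, with no analysis or smoothness considerations needed beyond the fact (already noted) that each $l_a$ and $\Phi$ is a diffeomorphism.
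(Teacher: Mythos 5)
Your proposal is correct and follows essentially the same route as the paper: evaluate both sides at a point $b$, apply left-invariance to reduce each side to an evaluation at $e_\Phi$, and match the two argument functions via the identity $\Phi\circ l_{\Phi^{-1}(\Phi^{-1}(a)\diamond b)}=l_a\circ l_b$, which is exactly the paper's $\Phi\circ l_{\Phi^{-2}(a)\diamond\Phi^{-1}(b)}=l_a\circ l_b$ since $\Phi^{-1}$ is multiplicative. The only difference is cosmetic notation for the translated point.
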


\begin{proof} First, let us note that by using the Hom-associativity condition of the product $\diamond$, we get the following equation:
\begin{equation*}
\label{eq:morcon8}\Phi\circ l_{(\Phi^{-2}(a)\diamond\Phi^{-1}(b))}=l_{a}\circ l_{b},\qquad\forall\, a,b\in G,
\end{equation*}
which implies that
\begin{equation}
\label{eq:w1}f\circ\Phi\circ l_{(\Phi^{-2}(a)\diamond\Phi^{-1}(b))}\circ\Phi^{-1}=f\circ l_{a}\circ l_{b}\circ\Phi^{-1}.
\end{equation}

By using the left-invariant property \eqref{eq:morcon6} of the section $x$, we have
\begin{equation}\label{eq1}
x(f\circ\Phi)\circ\Phi^{-1}\circ l_{\Phi^{-1}(a)}(b)=x\big(f\circ\Phi\circ l_{(\Phi^{-2}(a)\diamond\Phi^{-1}(b))}\circ\Phi^{-1}\big)(e_{\Phi}),
\end{equation}
and
\begin{equation}\label{eq2}
x(f\circ l_{a})(b)=x\big(f\circ l_{a}\circ l_{b}\circ\Phi^{-1}\big)(e_{\Phi}).
\end{equation}
Thus, by \eqref{eq:w1}--\eqref{eq2}, we deduce that the desired identity~(\ref{eq:morcon7}) holds.
\end{proof}

\begin{thm}
The space $\Gamma_{L}\big(\Phi^{!}TG\big)$ of left-invariant sections of the pullback bundle $\Phi^{!}TG$ is a Hom-Lie subalgebra of the Hom-Lie algebra $\big(\Gamma\big(\Phi^{!}TG\big), [\cdot, \cdot]_{\Phi}, \phi\big)$.
\end{thm}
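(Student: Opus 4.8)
The plan is to verify that $\Gamma_{L}\big(\Phi^{!}TG\big)$ is a linear subspace of $\Gamma\big(\Phi^{!}TG\big)$ which is closed under the map $\phi$ of \eqref{eq:morcon4} and under the bracket $[\cdot,\cdot]_{\Phi}$ of \eqref{eq:morcon5}; since the Hom-Jacobi identity and the multiplicativity of $\phi$ are then inherited from the ambient Hom-Lie algebra $\big(\Gamma\big(\Phi^{!}TG\big),[\cdot,\cdot]_{\Phi},\phi\big)$, this is exactly what is needed. Linearity is immediate, because the defining condition \eqref{eq:morcon6} is linear in $x$. That $\phi(x)$ and $[x,y]_{\Phi}$ already lie in $\Gamma\big(\Phi^{!}TG\big)$ has been shown above, so the only real task is left-invariance, and both cases will be extracted from Lemma~\ref{lem:imp1}, i.e., from identity~\eqref{eq:morcon7}, together with the elementary facts $\Phi(e_{\Phi})=e_{\Phi}$ (hence $\Phi^{-1}(e_{\Phi})=e_{\Phi}$) and $l_{\Phi^{-1}(a)}(e_{\Phi})=\Phi^{-1}(a)\diamond e_{\Phi}=a$.

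For closure under $\phi$: given left-invariant $x$ and $f\in C^{\infty}(G)$, $a\in G$, write $\phi(x)(f)(a)=x(f\circ\Phi)\big(\Phi^{-1}(a)\big)$. Using $\Phi^{-1}(a)=\Phi^{-1}\big(l_{\Phi^{-1}(a)}(e_{\Phi})\big)$ and then \eqref{eq:morcon7} for $x$, this equals $\big(x(f\circ\Phi)\circ\Phi^{-1}\circ l_{\Phi^{-1}(a)}\big)(e_{\Phi})=x(f\circ l_{a})(e_{\Phi})$. On the other hand, $\phi(x)\big(f\circ l_{a}\circ\Phi^{-1}\big)(e_{\Phi})=x\big(f\circ l_{a}\circ\Phi^{-1}\circ\Phi\big)\big(\Phi^{-1}(e_{\Phi})\big)=x(f\circ l_{a})(e_{\Phi})$. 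Comparing the two, $\phi(x)$ satisfies \eqref{eq:morcon6}, hence is left-invariant.

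For closure under the bracket: let $x,y$ be left-invariant. By \eqref{eq:morcon5}, $[x,y]_{\Phi}(f)(a)=x\big(y(f\circ\Phi)\circ\Phi^{-1}\big)\big(\Phi^{-1}(a)\big)-y\big(x(f\circ\Phi)\circ\Phi^{-1}\big)\big(\Phi^{-1}(a)\big)$. The strategy is to treat each summand by applying the plain left-invariance~\eqref{eq:morcon6} to the \emph{outer} operator and Lemma~\ref{lem:imp1} to the \emph{inner} one. For the first summand, \eqref{eq:morcon6} for $x$ at the point $\Phi^{-1}(a)$ rewrites it as $x\big(y(f\circ\Phi)\circ\Phi^{-1}\circ l_{\Phi^{-1}(a)}\circ\Phi^{-1}\big)(e_{\Phi})$, and then \eqref{eq:morcon7} for $y$ gives $y(f\circ\Phi)\circ\Phi^{-1}\circ l_{\Phi^{-1}(a)}=y(f\circ l_{a})$, so the first summand becomes $x\big(y(f\circ l_{a})\circ\Phi^{-1}\big)(e_{\Phi})$; symmetrically the second becomes $y\big(x(f\circ l_{a})\circ\Phi^{-1}\big)(e_{\Phi})$. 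Since $(f\circ l_{a}\circ\Phi^{-1})\circ\Phi=f\circ l_{a}$ and $\Phi^{-1}(e_{\Phi})=e_{\Phi}$, the right-hand side of \eqref{eq:morcon6} for $[x,y]_{\Phi}$, namely $[x,y]_{\Phi}\big(f\circ l_{a}\circ\Phi^{-1}\big)(e_{\Phi})$, expands to exactly $x\big(y(f\circ l_{a})\circ\Phi^{-1}\big)(e_{\Phi})-y\big(x(f\circ l_{a})\circ\Phi^{-1}\big)(e_{\Phi})$, so the two sides agree and $[x,y]_{\Phi}$ is left-invariant.

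All the steps are straightforward manipulations of pullbacks, so I do not expect a genuine obstacle; the one point that needs attention is the order of the rewriting in the bracket case. If one instead applies Lemma~\ref{lem:imp1} to the outer operator first, the inner argument acquires an extra $\Phi^{-1}$ (one ends up with $\Phi^{-2}$ in the argument) and the two halves of the commutator no longer match summand by summand — not surprising, since neither half is a $(\Phi^{*},\Phi^{*})$-derivation, only their difference is. Reserving \eqref{eq:morcon7} for the inner operator and using \eqref{eq:morcon6} on the outer one is precisely what makes the cancellation transparent.
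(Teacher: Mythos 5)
Your proposal is correct and follows essentially the same route as the paper: closure of $\Gamma_{L}\big(\Phi^{!}TG\big)$ under $\phi$ and under $[\cdot,\cdot]_{\Phi}$ is verified by applying the left-invariance condition \eqref{eq:morcon6} to the outer operator and Lemma~\ref{lem:imp1} (identity \eqref{eq:morcon7}) to the inner one, exactly as in the paper's proof. The only cosmetic difference is that for the $\phi$-closure you route through Lemma~\ref{lem:imp1} where the paper uses the identity $\Phi\circ l_{\Phi^{-1}(a)}\circ\Phi^{-1}=l_{a}$ directly; the computation is the same.
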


\begin{proof} First, let us prove that $\phi(x) \in \Gamma_{L}\big(\Phi^{!}TG\big)$ for any $x\in\Gamma_{L}\big(\Phi^{!}TG\big)$. By \eqref{eq:morcon4} and~\eqref{eq:morcon6}, we have
\begin{gather*}
\phi(x)(f)(a) =\big(\Phi^{-1}\big)^{*}\circ x\circ\Phi^{*}(f)(a)
 = x(f\circ\Phi)\big(\Phi^{-1}(a)\big)\\
\hphantom{\phi(x)(f)(a)}{} = x\big(f\circ\Phi\circ l_{\Phi^{-1}(a)}\Phi^{-1}\big)(e)
 = x(f\circ l_{a})(e) = \phi(x)\big(f\circ l_{a}\circ\Phi^{-1}\big)(e)
\end{gather*}
for all $x, y \in \Gamma_{L}\big(\Phi^{!}TG\big)$, and $a\in G$. This, in turn, implies that $\phi(x)\in \Gamma_{L}\big(\Phi^{!}TG\big)$.

Now we prove that $[x, y]_{\Phi} \in \Gamma_{L}\big(\Phi^{!}TG\big)$. By~\eqref{eq:morcon5} and~\eqref{eq:morcon6}, we have the following expressions:
\begin{gather*}
[x, y]_{\Phi}(f)(a)=x\big(y(f\circ\Phi)\circ\Phi^{-1}\big)\big(\Phi^{-1}(a)\big)-y\big(x(f\circ\Phi)\circ\Phi^{-1}\big)\big(\Phi^{-1}(a)\big)\\
\hphantom{[x, y]_{\Phi}(f)(a)}{}
= x\big(y(f\circ\Phi)\circ\Phi^{-1}\circ l_{\Phi^{-1}(a)}\circ\Phi^{-1}\big)(e_{\Phi})\\
\hphantom{[x, y]_{\Phi}(f)(a)=}{} -y\big(x(f\circ\Phi)\circ\Phi^{-1}\circ l_{\Phi^{-1}(a)}\circ\Phi^{-1}\big)(e_{\Phi}),
\end{gather*}
and
\[ [x,y]_{\Phi}\big(f\circ l_{a}\circ\Phi^{-1}\big)(e_{\Phi})=x\big(y(f\circ l_{a})\circ\Phi^{-1}\big)(e_{\Phi})-y\big(x(f\circ l_{a})\circ\Phi^{-1}\big)(e_{\Phi})\]
for all $x, y \in \Gamma_{L}\big(\Phi^{!}TG\big)$ and $a\in G$. Thus, from Lemma~\ref{lem:imp1}, we have
\[ [x, y]_{\Phi}(f)(a)=[x,y]_{\Phi}\big(f\circ l_{a}\circ\Phi^{-1}\big)(e_{\Phi}),\] which implies that $[x,y]_{\Phi} \in \Gamma_{L}\big(\Phi^{!}TG\big)$. The proof is finished.
\end{proof}

\begin{rmk}\label{Rem:LIE-HLIE}
Let $(G,\diamond,e_\Phi,\Phi)$ be a Hom-Lie group. Then we get a Lie group structure $(G,\cdot,e_\Phi)$ equipped with the product $\cdot\colon G\times G\rightarrow G$ defined by $a\cdot b=\Phi^{-1}(a\diamond b)$ for all $a,b\in G$.
\end{rmk}

\begin{lem}\label{lem:important2}
Let $(G,\diamond,e_\Phi,\Phi)$ be a Hom-Lie group. Let $x$ be a section of $\Phi^{!}TG$ and $X$ be the corresponding section of~$TG$. Then $x$ is left-invariant if and only if $X$ is a left-invariant vector field of the associated Lie group $(G,\cdot,e_\Phi)$ $($by Remark~{\rm \ref{Rem:LIE-HLIE})}.
\end{lem}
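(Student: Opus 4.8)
The plan is to translate the left-invariance condition \eqref{eq:morcon6} for the pullback section $x$ directly into the classical left-invariance condition for the vector field $X$ on the Lie group $(G,\cdot,e_\Phi)$ of Remark~\ref{Rem:LIE-HLIE}, using the dictionary $x(f)=X(f)\circ\Phi$ from \eqref{eq:impor1}. First I would record the relevant data of this associated Lie group. Its identity is $e_\Phi$, since $a\cdot e_\Phi=\Phi^{-1}(a\diamond e_\Phi)=\Phi^{-1}(\Phi(a))=a$, and its left translation by $a\in G$ is
\[
 L_a(b)=a\cdot b=\Phi^{-1}(a\diamond b)=\Phi^{-1}\circ l_a(b),\qquad\forall\, b\in G .
\]
Since $\Phi$ is a Hom-group automorphism (axiom (i) of Definition~\ref{Hom-group}), so is $\Phi^{-1}$, whence
\[
 l_{\Phi^{-1}(a)}\circ\Phi^{-1}(b)=\Phi^{-1}(a)\diamond\Phi^{-1}(b)=\Phi^{-1}(a\diamond b)=L_a(b) ,
\]
that is, $L_a=l_{\Phi^{-1}(a)}\circ\Phi^{-1}$. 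I would also invoke Proposition~\ref{pro:0}$(i)$, namely $\Phi(e_\Phi)=e_\Phi$, which keeps the base point at $e_\Phi$ after applying the dictionary.

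Next, using $x(f)=X(f)\circ\Phi$ together with $\Phi(e_\Phi)=e_\Phi$, the condition \eqref{eq:morcon6}, $x(f)(a)=x\big(f\circ l_a\circ\Phi^{-1}\big)(e_\Phi)$, becomes
\[
 X(f)(\Phi(a))=X\big(f\circ l_a\circ\Phi^{-1}\big)(e_\Phi),\qquad\forall\, a\in G,\ f\in C^\infty(G) .
\]
As $\Phi$ is a bijection, replacing $a$ by $\Phi^{-1}(a)$ shows this is equivalent to
\[
 X(f)(a)=X\big(f\circ l_{\Phi^{-1}(a)}\circ\Phi^{-1}\big)(e_\Phi)=X(f\circ L_a)(e_\Phi),\qquad\forall\, a\in G,\ f\in C^\infty(G) ,
\]
using $L_a=l_{\Phi^{-1}(a)}\circ\Phi^{-1}$ in the last step. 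This last condition is precisely the assertion that $X$ is a left-invariant vector field of the Lie group $(G,\cdot,e_\Phi)$: it says $X_a=\mathrm{d}(L_a)_{e_\Phi}(X_{e_\Phi})$ for every $a\in G$, which is equivalent to $(L_a)_*X=X$ for all $a$. Reading the resulting chain of equivalences in both directions finishes the proof.

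The computations are entirely routine; the only point requiring a little care is the identity $L_a=l_{\Phi^{-1}(a)}\circ\Phi^{-1}$, which is where the hypothesis that $\Phi$ is an automorphism of the Hom-group is genuinely used, together with the normalization $\Phi(e_\Phi)=e_\Phi$. Beyond this bookkeeping of where $\Phi$ and $\Phi^{-1}$ occur, I expect no real obstacle.
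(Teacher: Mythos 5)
Your proof is correct and follows essentially the same route as the paper: both translate the condition \eqref{eq:morcon6} through the dictionary $x(f)=X(f)\circ\Phi$ and identify $f\circ l_a\circ\Phi^{-1}$ with $f\circ L_{\Phi(a)}$ (your identity $L_a=l_{\Phi^{-1}(a)}\circ\Phi^{-1}$ is the same relation after the substitution $a\mapsto\Phi(a)$). Your write-up is in fact slightly more careful than the paper's, which omits the details of the converse direction.
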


\begin{proof} If $x \in \Gamma_{L}\big(\Phi^{!}TG\big)$, then by the definition of a left-invariant section, we get
\[ x(f)(a)=x\big(f\circ l_{a}\circ\Phi^{-1}\big)(e_\Phi),\qquad\forall\, f\in C^{\infty}(G), \qquad a\in G.\]
Let $X$ be the corresponding section of~$TG$, i.e., $x=X\circ\Phi$. Then we obtain the following expression:
\begin{equation*}
X(f)(\Phi(a))=X\big(f\circ l_{a}\circ\Phi^{-1}\big)(e_\Phi)=X(f\circ L_{\Phi(a)})(e_\Phi),
\end{equation*}
where $L_{\Phi(a)}(b)=\Phi(a)\cdot b=a\diamond \Phi^{-1}(b)$. Thus, $X$ is a left invariant vector field of the Lie group $(G, \cdot, e_\Phi)$.

Similarly, if $X\in \Gamma(TG)$ is a left-invariant vector field of the Lie group $(G, \cdot, e_\Phi)$, then we can deduce that the corresponding section $x\in \Gamma\big(\Phi^{!}TG\big)$ is left-invariant. We omit the details.
\end{proof}

Let $(G, \diamond, e_\Phi, \Phi)$ be a Hom-Lie group. Let us denote by $\g^{!}$, the fibre of $e_{\Phi}$ in the pullback bundle $\Phi^{!}TG$. Notice that $\g=T_{e_{\Phi}}G=\Phi^{!}T_{e_{\Phi}}G=\g^{!}$ (since, $\Phi(e_{\Phi})=e_{\Phi}$). Then by Lemma~\ref{lem:important2}, $\g^{!}$ is in one-to-one correspondence with $\Gamma_{L}(\Phi^{!}TG)$. With this in mind, it is natural to define a~bracket $[\cdot,\cdot]_{\g^{!}}$ and a vector space isomorphism $\phi_{\g^{!}}\colon \g^{!}\rightarrow \g^{!}$ as follows:
\begin{gather*}
[x(e_\Phi), y(e_\Phi)]_{\g^{!}} = [x, y]_{\Phi}(e_\Phi),\\
\phi_{\g^{!}}(x(e_\Phi)) = (\phi(x))(e_\Phi),
\end{gather*}
for all $x, y \in \Gamma_{L}\big(\Phi^{!}TG\big)$. It follows that the triple $\big(\mathfrak{g}^{!},[\cdot,\cdot]_{\g^{!}},\phi_{\g^{!}}\big)$ is a Hom-Lie algebra and it is isomorphic to the Hom-Lie algebra $\big(\Gamma_{L}\big(\Phi^{!}TG\big),[\cdot, \cdot]_{\Phi},\phi\big)$.

\begin{lem}\label{lem:Des1}
Let $(G,\diamond,e_\Phi,\Phi)$ be a Hom-Lie group. If $x, y \in \Gamma_{L}\big(\Phi^{!}TG\big)$, and $X$, $Y$ are the corresponding left-invariant vector fields of the Lie group $(G,\cdot,e_\Phi)$, then we obtain the following identities:
\begin{gather*}
[x, y]_{\Phi}(e_\Phi)=\Phi_{*e_\Phi}([X,Y](e_\Phi)),\\
\phi(x)(e_\Phi)=\Phi_{*e_\Phi}(X(e_\Phi)).
\end{gather*}
Here, the map $\Phi_*\colon TG\rightarrow TG$ is the differential of the smooth map $\Phi\colon G\rightarrow G$.
\end{lem}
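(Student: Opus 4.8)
The plan is to unwind everything to the level of the space $C^\infty(G)$, where sections of $\Phi^!TG$ act as $(\Phi^*,\Phi^*)$-derivations via~\eqref{eq:impor1}, and where by Lemma~\ref{lem:important2} the left-invariant section $x$ corresponds to a left-invariant vector field $X$ of the Lie group $(G,\cdot,e_\Phi)$ of Remark~\ref{Rem:LIE-HLIE}. The key bookkeeping fact is the relation $x(f)=X(f)\circ\Phi$, equivalently $x = X\circ\Phi$, together with the chain rule $\Phi_{*e_\Phi}(X(e_\Phi))(f) = X(e_\Phi)(f\circ\Phi) = X(f\circ\Phi)(e_\Phi)$. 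So I would first prove the second identity and then leverage it for the first.

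For the second identity: starting from~\eqref{eq:morcon4}, $\phi(x)(f)(e_\Phi) = \big((\Phi^{-1})^*\circ x\circ\Phi^*\big)(f)(e_\Phi) = x(f\circ\Phi)(\Phi^{-1}(e_\Phi)) = x(f\circ\Phi)(e_\Phi)$, using $\Phi(e_\Phi)=e_\Phi$ from Proposition~\ref{pro:0}(i). Now substitute $x(g) = X(g)\circ\Phi$ with $g = f\circ\Phi$ to get $x(f\circ\Phi)(e_\Phi) = X(f\circ\Phi)(\Phi(e_\Phi)) = X(f\circ\Phi)(e_\Phi)$. Finally $X(f\circ\Phi)(e_\Phi) = X(e_\Phi)(f\circ\Phi) = \big(\Phi_{*e_\Phi}X(e_\Phi)\big)(f)$ by definition of the pushforward. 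Since $f$ is arbitrary, $\phi(x)(e_\Phi) = \Phi_{*e_\Phi}(X(e_\Phi))$.

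For the first identity I would run the analogous computation on the bracket formula~\eqref{eq:morcon5}. Evaluating $[x,y]_\Phi(f)$ at $e_\Phi$ and again using $\Phi(e_\Phi)=e_\Phi$ collapses the outer $(\Phi^{-1})^*$ and leaves an expression of the shape $x\big((\Phi^{-1})^*\circ y\circ\Phi^*(f)\big)(e_\Phi) - (x\leftrightarrow y)$; rewriting each occurrence of a section via $x=X\circ\Phi$, $y=Y\circ\Phi$ and carefully tracking the inner $\Phi^*$ and $(\Phi^{-1})^*$ factors should telescope the twist maps and produce exactly $X(Y(f\circ\Phi))(e_\Phi) - Y(X(f\circ\Phi))(e_\Phi) = [X,Y](f\circ\Phi)(e_\Phi) = \big(\Phi_{*e_\Phi}[X,Y](e_\Phi)\big)(f)$. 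The cleanest route is probably to first establish, as an auxiliary observation for left-invariant $x$ with corresponding $X$, the operator identity $(\Phi^{-1})^*\circ x\circ\Phi^* = X\circ\Phi \circ(\text{shift})$ — more precisely to note that $\phi(x)$ corresponds to the vector field $X$ again (up to the pushforward already computed), so that the composition inside the bracket matches the ordinary Lie-bracket composition of vector fields on $C^\infty(G)$. The main obstacle is purely notational: keeping the pullback maps $\Phi^*$, $(\Phi^{-1})^*$ and the argument shifts $\Phi$, $\Phi^{-1}$ correctly aligned through the nested compositions so that they genuinely cancel rather than accumulate; once the dictionary $x\leftrightarrow X\circ\Phi$ is applied consistently and $\Phi(e_\Phi)=e_\Phi$ is used at the final evaluation, both identities fall out, and I would omit the fully expanded intermediate lines as routine.
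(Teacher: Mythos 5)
Your proposal is correct and follows essentially the same route as the paper's proof: unwind everything to $(\Phi^{*},\Phi^{*})$-derivations on $C^{\infty}(G)$ via the dictionary $x=X\circ\Phi$, observe that the inner $\Phi^{-1}$-shifts cancel so that $[x,y]_{\Phi}(f)=[X,Y](f\circ\Phi)$, and evaluate at $e_{\Phi}$ using $\Phi(e_{\Phi})=e_{\Phi}$. The telescoping you defer as routine is exactly the three-line computation the paper carries out, so nothing is missing.
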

\begin{proof} By \eqref{eq:impor1} and \eqref{eq:morcon4}, we get
\begin{gather*}
[x, y]_{\Phi}(f) = x\big(y(f\circ\Phi)\circ\Phi^{-1}\big)\circ\Phi^{-1}-y\big(x(f\circ\Phi)\circ\Phi^{-1}\big)\circ\Phi^{-1}\\
 \hphantom{[x, y]_{\Phi}(f)}{} = x(Y(f\circ\Phi))\circ\Phi^{-1}-y(X(f\circ\Phi))\circ\Phi^{-1}\\
\hphantom{[x, y]_{\Phi}(f)}{} = X(Y(f\circ\Phi))-Y(X(f\circ\Phi)) = [X, Y](f\circ\Phi)
\end{gather*}
for all $f \in C^{\infty}(G)$. Let $Z\in\Gamma(TG)$ be the corresponding section of $[x, y]_{\Phi}\in\Gamma\big(\Phi^{!}TG\big)$, i.e., $Z\circ\Phi=[x, y]_{\Phi}$. Then, we get the following expressions:
\[ Z(f)=[x, y]_{\Phi}(f)\circ\Phi^{-1}=([X,Y](f\circ\Phi))\circ\Phi^{-1},\]
 and
\[ Z(f)(e_{\Phi})=[x, y]_{\Phi}(f)(e_{\Phi})=[X,Y](f\circ\Phi)(e_{\Phi})=\Phi_{*e_\Phi}([X,Y](e_\Phi))(f).
\]
 Thus, $Z(e_\Phi)=\Phi_{*e_\Phi}([X,Y](e_\Phi))$ and we deduce that $[x, y]_{\Phi}(e_\Phi)=\Phi_{*e_\Phi}([X,Y](e_\Phi))$.

 Next, let us assume that $W\in\Gamma(TG)$ is the corresponding section of $\phi(x)\in\Gamma\big(\Phi^{!}TG\big)$. Since $\phi(x)(e_\Phi)=W({e_\Phi})$, we have
 \[ W(f)(e_\Phi)=\phi(x)(f)(e_\Phi)=x(f \circ\Phi)(e_\Phi)=X(f \circ \Phi)(e_\Phi),\]
which implies that $\phi(x)(e_\Phi)=W_{e_\Phi}=\Phi_{*e_\Phi}(X(e_\Phi))$.
\end{proof}

At the end of this subsection, we show that every regular Hom-Lie algebra is integrable.
\begin{defi}
A Hom-Lie group $(G,\diamond,e_\Phi,\Phi)$ is called simply connected Hom-Lie group if the underlying manifold~$G$ is a simply connected topological space.
\end{defi}
\begin{thm}
Let $(\g,[\cdot,\cdot]_{\g},\phi_{\g})$ be a regular Hom-Lie algebra. Then there exists a unique simply connected Hom-Lie group $(G,\diamond,e_\Phi,\Phi)$ such that $\g=\g^{!}$ and $\phi_{\g}=\Phi_{*e_\Phi}=\phi_{\g^{!}}$, where $\big(\g^{!},[\cdot,\cdot]_{\g^{!}},\phi_{\g^{!}}\big)$ is the associated Hom-Lie algebra.
\end{thm}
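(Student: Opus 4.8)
The plan is to reduce the statement to classical Lie theory through the dictionary ``regular Hom-Lie algebra $\leftrightarrow$ (Lie algebra, automorphism)'' of Lemma~\ref{RemarkS6} together with the dictionary ``Hom-Lie group $\leftrightarrow$ (Lie group, automorphism)'' supplied by Example~\ref{rmk:00} and Remark~\ref{Rem:LIE-HLIE}. First, from $(\g,[\cdot,\cdot]_\g,\phi_\g)$ I form the Lie algebra $(\g,[\cdot,\cdot]_{\rm Lie})$ of Lemma~\ref{RemarkS6}, and I observe that $\phi_\g$ is a Lie algebra automorphism of it: since $\phi_\g$ preserves $[\cdot,\cdot]_\g$,
\[\phi_\g[x,y]_{\rm Lie}=\phi_\g\big[\phi_\g^{-1}x,\phi_\g^{-1}y\big]_\g=[x,y]_\g=\big[\phi_\g^{-1}(\phi_\g x),\phi_\g^{-1}(\phi_\g y)\big]_\g=[\phi_\g x,\phi_\g y]_{\rm Lie}.\]

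By Lie's third theorem there is a unique simply connected Lie group $(G,\cdot,e)$ whose Lie algebra is $(\g,[\cdot,\cdot]_{\rm Lie})$. Because $G$ is simply connected, the Lie algebra automorphism $\phi_\g$ integrates to a unique Lie group automorphism $\Phi\colon G\rightarrow G$ with $\Phi_{*e}=\phi_\g$; in particular $\Phi$ is a diffeomorphism and $\Phi(e)=e$. Example~\ref{rmk:00} then produces a Hom-Lie group $(G,\diamond,e_\Phi:=e,\Phi)$ with $a\diamond b=\Phi(a)\cdot\Phi(b)$, and its underlying manifold $G$ is simply connected.

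It remains to identify the associated Hom-Lie algebra $\big(\g^{!},[\cdot,\cdot]_{\g^{!}},\phi_{\g^{!}}\big)$. As vector spaces $\g^{!}=\Phi^{!}T_{e_\Phi}G=T_eG=\g$, and under the identification $x(e_\Phi)=X(e_\Phi)=:u$, where $X$ is the left-invariant vector field of $(G,\cdot,e)$ corresponding to $x$, Lemma~\ref{lem:Des1} gives $\phi_{\g^{!}}(u)=\Phi_{*e}(u)=\phi_\g(u)$ and, using that by construction $[X,Y](e)=[u,w]_{\rm Lie}$ for the Lie group $(G,\cdot,e)$,
\[[u,w]_{\g^{!}}=\Phi_{*e}\big([X,Y](e)\big)=\phi_\g\big([u,w]_{\rm Lie}\big)=\phi_\g\big[\phi_\g^{-1}u,\phi_\g^{-1}w\big]_\g=[u,w]_\g.\]
Hence $\g=\g^{!}$ and $\phi_\g=\Phi_{*e_\Phi}=\phi_{\g^{!}}$, as required.

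For uniqueness, let $(G',\diamond',e',\Phi')$ be another simply connected Hom-Lie group with the same associated data. From axiom~(i) of Definition~\ref{Hom-group} and the formula $a\cdot'b=(\Phi')^{-1}(a\diamond'b)$ of Remark~\ref{Rem:LIE-HLIE} one checks that $\Phi'$ is a Lie group automorphism of $(G',\cdot',e')$ and that $a\diamond'b=\Phi'(a)\cdot'\Phi'(b)$; running the displayed bracket computation backwards via Lemma~\ref{lem:Des1} shows that the Lie algebra of $(G',\cdot',e')$ is $(\g,[\cdot,\cdot]_{\rm Lie})$ with $\Phi'_{*e'}=\phi_\g$. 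By Lie's third theorem there is a Lie group isomorphism $\psi\colon(G',\cdot',e')\rightarrow(G,\cdot,e)$ realizing $\Id_\g$ on Lie algebras, and since $\psi\circ\Phi'$ and $\Phi\circ\psi$ are Lie group homomorphisms with equal differential $\phi_\g$ at $e'$ on a connected simply connected group, they coincide; therefore $\psi$ also intertwines $\diamond'$ with $\diamond$, so $(G',\diamond',e',\Phi')\cong(G,\diamond,e_\Phi,\Phi)$. Beyond this bookkeeping, the only real content lies in the two classical inputs — Lie's third theorem and the integration of Lie algebra automorphisms on simply connected Lie groups — which is where I expect the ``hard part'' to sit; everything else is forced by Lemmas~\ref{RemarkS6} and~\ref{lem:Des1}.
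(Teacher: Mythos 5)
Your proof is correct and follows essentially the same route as the paper: pass to the induced Lie algebra $(\g,[\cdot,\cdot]_{\rm Lie})$ of Lemma~\ref{RemarkS6}, integrate it and the automorphism $\phi_\g$ on the unique simply connected Lie group, build the Hom-Lie group via Example~\ref{rmk:00}, and identify the associated Hom-Lie algebra through Lemma~\ref{lem:Des1}. Your treatment is in fact slightly more complete than the paper's, which leaves both the verification that $\phi_\g$ is an automorphism of $[\cdot,\cdot]_{\rm Lie}$ and the uniqueness of $(G,\diamond,e_\Phi,\Phi)$ implicit.
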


\begin{proof}
For the Lie algebra $(\g,[\cdot,\cdot]_{\rm Lie})$ given in Lemma~\ref{RemarkS6}, it is easy to see that $\phi_{\g}$ is a Lie algebra isomorphism of $(\g,[\cdot,\cdot]_{\rm Lie})$.

We have a unique simply connected Lie group $(G,\cdot)$ such that $(\g,[\cdot,\cdot]_{\rm Lie})$ is the Lie algebra of~$(G,\cdot)$. Since $\phi_{\g}$ is a Lie algebra isomorphism of $(\g,[\cdot,\cdot]_{\rm Lie})$ and $G$ is a simply connected Lie group, we have a unique isomorphism $\Phi$ of the Lie group $(G,\cdot)$ such that $\Phi_{*e}=\phi_{\g}$. By Example~\ref{rmk:00}, the tuple $(G,\diamond, e_{\Phi},\Phi)$ is a Hom-Lie group. Finally, by Lemma~\ref{lem:Des1}, it follows that
\begin{equation*}
[x,y]_{\g^{!}}=\Phi_{*e}[x,y]_{\rm Lie}=\phi_{\g}[x,y]_{\rm Lie}=[x,y]_{\g},\qquad \phi_{\g^{!}}(x)=\Phi_{*e}(x)=\phi_{\g}(x),\qquad \forall\, x,y\in\g^{!},
\end{equation*}
which implies that $\phi_{\g^{!}}=\Phi_{*e_\Phi}$.
\end{proof}

\subsection{One-parameter Hom-Lie subgroups}

Let $(G,\diamond,e_\Phi,\Phi)$ be a Hom-Lie group and $\big(\g^{!},[\cdot,\cdot]_{\g^{!}},\phi_{\g^{!}}\big)$ be its Hom-Lie algebra. Then we define one-parameter Hom-Lie subgroups of $(G,\diamond,e_\Phi,\Phi)$ and prove that there is a one-to-one correspondence between elements of $\g^{!}$ and one-parameter Hom-Lie subgroups of $(G,\diamond,e_\Phi,\Phi)$.

\begin{defi}
A weak homomorphism of Hom-Lie groups
\[ \sigma^{!}\colon \ (R, +, 0, \Id)\rightarrow (G,\diamond,e_\Phi,\Phi)\] is called a one-parameter Hom-Lie subgroup of the Hom-Lie group $(G,\diamond,e_\Phi,\Phi)$.
\end{defi}

\begin{thm}\label{thm:imp1}
Let $(G,\diamond,e_\Phi,\Phi)$ be a Hom-Lie group. Then $\sigma^{!}\colon (R, +, 0, \Id)\rightarrow (G,\diamond,e_\Phi,\Phi)$ is a~one-parameter Hom-Lie subgroup of the Hom-Lie group $(G,\diamond,e_\Phi,\Phi)$ if and only if there exists a unique $x\in \g^{!}$ such that $\sigma^{!}(t)=\Phi(\exp(tx)),$ where $\exp$ is the exponential map of the Lie group $(G,\cdot,e_\Phi)$.
\end{thm}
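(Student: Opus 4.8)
The plan is to translate the statement about Hom-Lie groups into the corresponding statement about the associated Lie group $(G,\cdot,e_\Phi)$ from Remark~\ref{Rem:LIE-HLIE}, use the classical theory of one-parameter subgroups there, and then transport the answer back through $\Phi$. First I would unwind the definition of a weak homomorphism $\sigma^!\colon (R,+,0,\Id)\rightarrow (G,\diamond,e_\Phi,\Phi)$: it requires $\sigma^!(0)=e_\Phi$ and $\Phi(\sigma^!(s+t))=\sigma^!(s)\diamond\sigma^!(t)$ for all $s,t\in R$. Using $a\diamond b=\Phi(a\cdot b)$ (equivalently $a\cdot b=\Phi^{-1}(a\diamond b)$ from Remark~\ref{Rem:LIE-HLIE}), the condition $\Phi(\sigma^!(s+t))=\Phi(\sigma^!(s)\cdot\sigma^!(t))$ becomes, since $\Phi$ is a diffeomorphism, simply $\sigma^!(s+t)=\sigma^!(s)\cdot\sigma^!(t)$. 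Thus a one-parameter Hom-Lie subgroup of $(G,\diamond,e_\Phi,\Phi)$ is exactly the same smooth datum as an ordinary one-parameter subgroup $\sigma^!\colon (R,+,0)\rightarrow (G,\cdot,e_\Phi)$ of the associated Lie group.

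Next I would invoke the classical fact that one-parameter subgroups of a Lie group $(G,\cdot,e_\Phi)$ are in bijection with its Lie algebra: $\sigma^!$ corresponds to the unique $v\in T_{e_\Phi}G$ with $\dot\sigma^!(0)=v$, and then $\sigma^!(t)=\exp(tv)$, where $\exp$ is the exponential map of $(G,\cdot,e_\Phi)$. Now I need to identify the correct element of $\g^!$. Recall $\g^!=\Phi^!T_{e_\Phi}G=T_{e_\Phi}G$ as a vector space (since $\Phi(e_\Phi)=e_\Phi$), and by Lemma~\ref{lem:Des1} the isomorphism between $\g^!$ and left-invariant vector fields of $(G,\cdot,e_\Phi)$ is implemented by $\Phi_{*e_\Phi}$: the element of $\g^!$ attached to a left-invariant field $X$ is $\phi(x)(e_\Phi)=\Phi_{*e_\Phi}(X(e_\Phi))$. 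So given $\sigma^!$ with $\dot\sigma^!(0)=v$, set $x:=\Phi_{*e_\Phi}(v)\in\g^!$; then $v=\Phi_{*e_\Phi}^{-1}(x)$ and $\sigma^!(t)=\exp\bigl(t\Phi_{*e_\Phi}^{-1}(x)\bigr)$. It remains to massage this into the claimed form $\sigma^!(t)=\Phi(\exp(tx))$. This uses the standard naturality of $\exp$ under the Lie group automorphism $\Phi$: $\Phi\circ\exp=\exp\circ\,\Phi_{*e_\Phi}$. Hence $\Phi(\exp(tx))=\exp\bigl(t\Phi_{*e_\Phi}(x)\bigr)$, which matches $\exp\bigl(t\Phi_{*e_\Phi}^{-1}(x)\bigr)$ only after re-indexing; so in fact the cleanest route is to define $x$ by $v=\Phi_{*e_\Phi}(x)$, i.e.\ $x=\Phi_{*e_\Phi}^{-1}(v)$, giving $\sigma^!(t)=\exp(tv)=\exp\bigl(t\Phi_{*e_\Phi}(x)\bigr)=\Phi(\exp(tx))$. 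Uniqueness of $x$ is inherited from uniqueness of $v$ together with bijectivity of $\Phi_{*e_\Phi}$.

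Conversely, I would check that for any $x\in\g^!$ the curve $\sigma^!(t):=\Phi(\exp(tx))$ is a one-parameter Hom-Lie subgroup: smoothness and $\sigma^!(0)=\Phi(e_\Phi)=e_\Phi$ are clear, and the weak-homomorphism identity follows by reversing the computation in the first paragraph, using $\Phi\circ\exp=\exp\circ\,\Phi_{*e_\Phi}$ and additivity of $\exp$ in $t$. The main subtlety—more bookkeeping than genuine obstacle—is keeping the several identifications straight: the vector-space identification $\g^!=T_{e_\Phi}G$, the role of $\Phi_{*e_\Phi}$ in Lemma~\ref{lem:Des1}, and the placement of $\Phi$ versus $\Phi_{*e_\Phi}^{-1}$ so that the final formula reads $\sigma^!(t)=\Phi(\exp(tx))$ with $\exp$ the exponential of $(G,\cdot,e_\Phi)$ rather than some twisted exponential; choosing $x=\Phi_{*e_\Phi}^{-1}(\dot\sigma^!(0))$ resolves this cleanly.
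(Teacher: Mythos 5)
Your proposal is correct and follows essentially the same route as the paper: translate the weak-homomorphism condition into the associated Lie group $(G,\cdot,e_\Phi)$ of Remark~\ref{Rem:LIE-HLIE}, invoke the classical classification of one-parameter subgroups, and transport back through $\Phi$ using $\Phi\circ\exp=\exp\circ\,\Phi_{*e_\Phi}$. The only cosmetic difference is that you observe $\sigma^!$ itself is a one-parameter subgroup of $(G,\cdot,e_\Phi)$ and then adjust by $\Phi_{*e_\Phi}^{-1}$ at the tangent level, whereas the paper shows directly that $\Phi^{-1}\circ\sigma^!$ is the one-parameter subgroup $\exp(tx)$ --- these are equivalent, and your brief detour through Lemma~\ref{lem:Des1} (which concerns $\phi(x)(e_\Phi)$, not the identification $x(e_\Phi)=X(e_\Phi)$ itself) is unnecessary but harmless since you land on the correct $x=\Phi_{*e_\Phi}^{-1}\bigl(\dot\sigma^!(0)\bigr)$.
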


\begin{proof} For all $x\in \g^{!}$, we have
\begin{equation*}
\Phi\big(\sigma^{!}(t+s)\big)=\Phi\big(\Phi\big(\exp\big((t+s)x\big)\big)\big)=\Phi(\exp(tx)\diamond \exp(sx))=\sigma^{!}(t)\diamond \sigma^{!}(s),
\end{equation*}
which implies that $\sigma^{!}(t)=\Phi\big(\exp(tx)\big)$ is a one-parameter Hom-Lie subgroup of the Hom-Lie group $(G,\diamond,e_\Phi,\Phi)$. For different $x\in \g^!$, we get a different one-parameter Hom-Lie subgroup.

Now, let us assume that $\sigma^{!}\colon (R, +, 0, \Id)\rightarrow (G,\diamond,e_\Phi,\Phi)$ is a one-parameter Hom-Lie subgroup of the Hom-Lie group $(G,\diamond,e_\Phi,\Phi)$. Then, for all $t,s\in\mathbb{R}$,
\begin{equation*}
\Phi^{-1}\big(\sigma^{!}(t+s)\big)=\Phi^{-2}\big(\sigma^{!}(t)\diamond \sigma^{!}(s)\big)=\Phi^{-1}\big(\sigma^{!}(t)\cdot\sigma^{!}(s)\big)=\Phi^{-1}\big(\sigma^{!}(t)\big)\cdot\Phi^{-1}\big(\sigma^{!}(s)\big),
\end{equation*}
which implies that $\Phi^{-1}\big(\sigma^{!}(t)\big)$ is a one-parameter Lie subgroup of the Lie group $(G,\cdot,e_\Phi)$ (defined in Remark~\ref{Rem:LIE-HLIE}). Thus there exists a unique $x\in \g^{!}$, such that $\sigma^{!}(t)=\Phi(\exp (tx))$. The proof is finished.
\end{proof}

By Theorem \ref{thm:imp1}, one-parameter Hom-Lie subgroups of Hom-Lie group $(G,\diamond,e_\Phi,\Phi)$ are in one-to-one correspondence with $\g^{!}$. We denote by $\sigma^{!}_{x}(t)$ the one-parameter Hom-Lie subgroup of the Hom-Lie group $(G,\diamond,e_\Phi,\Phi)$, which corresponds with $x$.

\subsection[The Hexp map]{The $\boldsymbol{\Hexp}$ map}
Let $(G,\diamond,e_\Phi,\Phi)$ be a Hom-Lie group and $\mathfrak{g}^{!}$ be the fibre of the pullback bundle $\Phi^{!}TG$ at $e_\Phi$. Also, let us assume that $\sigma^{!}\colon (R, +,0, \Id)\rightarrow (G,\diamond,e_\Phi,\Phi)$ is a one-parameter Hom-Lie subgroup of the Hom-Lie group $(G,\diamond,e_\Phi,\Phi)$.

Then, let us define a map $\Hexp\colon \mathfrak{g}^{!} \rightarrow G$ by
\begin{equation}
\label{eq:35}\Hexp(x)=\sigma^{!}_{x}(1),\qquad\forall\, x\in \mathfrak{g}^{!}.
\end{equation}

\begin{thm}\label{HLIEprod-HLIEbrac}
Let $(G,\diamond,e_\Phi,\Phi)$ be a Hom-Lie group and $\big(\mathfrak{g}^{!},[\cdot,\cdot]_{\g^{!}},\Phi_{\g^{!}}\big)$ be the associated Hom-Lie algebra. Then the Hom-Lie bracket $[\cdot,\cdot]_{\g^{!}}$ can be expressed in terms of the $\Hexp\colon \mathfrak{g}^{!}\rightarrow G$ map as follows:
\[
[x, y]_{\g^{!}}=\frac{{\rm d}}{{\rm d}t}\frac{{\rm d}}{{\rm d}s}\Big|_{t=0,s=0}\big(\Phi^{-3}(\Hexp(sx)\diamond \Hexp(ty)) \diamond \Phi^{-2}(\Hexp(-sx))\big) \diamond\Phi^{-1}( \Hexp(-ty)),\]
for any $x,y\in \mathfrak{g}^{!}$.
\end{thm}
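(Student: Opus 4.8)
The plan is to reduce this identity to the corresponding well-known fact for the associated Lie group $(G,\cdot,e_\Phi)$ of Remark~\ref{Rem:LIE-HLIE}, and then translate back through the dictionary relating $\diamond$, $\Phi$, $\Hexp$ and $[\cdot,\cdot]_{\g^{!}}$ to their Lie-theoretic counterparts. First I would record the basic translation facts: by Theorem~\ref{thm:imp1} and~\eqref{eq:35} we have $\Hexp(x)=\sigma^{!}_x(1)=\Phi(\exp x)$, so $\Phi^{-1}(\Hexp(x))=\exp x$ where $\exp$ is the Lie-group exponential of $(G,\cdot,e_\Phi)$; and by Remark~\ref{Rem:LIE-HLIE} the product $\cdot$ is given by $a\cdot b=\Phi^{-1}(a\diamond b)$, equivalently $a\diamond b=\Phi(a\cdot b)$. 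I would also use Lemma~\ref{lem:Des1}, which gives $[x,y]_{\g^{!}}=\Phi_{*e_\Phi}([X,Y](e_\Phi))$ where $X$, $Y$ are the left-invariant vector fields of $(G,\cdot,e_\Phi)$ corresponding to $x$, $y$; since $\Phi_{*e_\Phi}=\phi_{\g}$ and $[X,Y](e_\Phi)=[x,y]_{\rm Lie}$ in the notation of Lemma~\ref{RemarkS6}, this reads $[x,y]_{\g^{!}}=\Phi_{*e_\Phi}([x,y]_{\rm Lie})$.

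Next I would rewrite the right-hand side of the claimed formula purely in terms of the Lie product $\cdot$ and the Lie exponential $\exp$. Write $a=\Hexp(sx)=\Phi(\exp(sx))$ and $b=\Hexp(ty)=\Phi(\exp(ty))$, so $\Phi^{-1}(a)=\exp(sx)$, $\Phi^{-1}(b)=\exp(ty)$, and similarly $\Hexp(-sx)=\Phi(\exp(-sx))$, $\Hexp(-ty)=\Phi(\exp(-ty))$. Using $u\diamond v=\Phi(u\cdot v)$ repeatedly and the homomorphism property $\Phi(u\cdot v)=\Phi(u)\cdot\Phi(v)$: first $\Hexp(sx)\diamond\Hexp(ty)=\Phi(\exp(sx)\cdot\exp(ty))$; applying $\Phi^{-3}$ gives $\Phi^{-2}(\exp(sx)\cdot\exp(ty))$; then $\diamond$-multiplying by $\Phi^{-2}(\Hexp(-sx))=\Phi^{-1}(\exp(-sx))$ and using $u\diamond v=\Phi(u\cdot v)$ gives $\Phi\big(\Phi^{-2}(\exp(sx)\cdot\exp(ty))\cdot\Phi^{-1}(\exp(-sx))\big)=\Phi^{-1}(\exp(sx)\cdot\exp(ty))\cdot\exp(-sx)$; and finally $\diamond$-multiplying by $\Phi^{-1}(\Hexp(-ty))=\exp(-ty)$ and applying $\Phi(u\cdot v)=\Phi(u)\cdot\Phi(v)$ once more yields $\Phi\big(\big(\Phi^{-1}(\exp(sx)\cdot\exp(ty))\cdot\exp(-sx)\big)\cdot\exp(-ty)\big)$. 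Since $\Phi$ is a Lie group automorphism this equals $\Phi_{*e_\Phi}$ applied to the expression $\big(\big(\exp(sx)\cdot\exp(ty)\big)\cdot\exp(-sx)\big)\cdot\exp(-ty)$ after pushing $\Phi^{-1}$ through (using that $\Phi^{-1}$ is also an automorphism, so $\Phi^{-1}(\exp(sx)\cdot\exp(ty))=\exp(\Phi_{*}^{-1}sx)\cdot\exp(\Phi_{*}^{-1}ty)$). Here one must be a little careful tracking the $\Phi^{\pm}$-twists; the cleanest route is to observe that $\Phi$ commutes with $\exp$ in the sense $\Phi(\exp v)=\exp(\Phi_{*e_\Phi}v)$, and to collect all the twists at the end into a single outer $\Phi_{*e_\Phi}$.

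Then I would invoke the classical formula for the Lie bracket via the exponential map: for a Lie group $(G,\cdot)$ with exponential $\exp$,
\[
[v,w]=\frac{{\rm d}}{{\rm d}t}\frac{{\rm d}}{{\rm d}s}\Big|_{t=0,s=0}\exp(sv)\cdot\exp(tw)\cdot\exp(-sv)\cdot\exp(-tw),
\]
valid because the lowest-order term in the BCH expansion of the group commutator is the Lie bracket. Applying this with the appropriate $\Phi_{*}^{-1}$-twisted arguments, taking the mixed second derivative commutes with the (linear, hence smooth) map $\Phi_{*e_\Phi}$, so the whole right-hand side becomes $\Phi_{*e_\Phi}\big([\Phi_{*e_\Phi}^{-1}x,\Phi_{*e_\Phi}^{-1}y]_{G}\big)$ where $[\cdot,\cdot]_G$ is the Lie-algebra bracket of $(G,\cdot)$, i.e.\ exactly $[x,y]_{\rm Lie}$ of Lemma~\ref{RemarkS6} since $[x,y]_{\rm Lie}=[\phi_\g^{-1}x,\phi_\g^{-1}y]_\g$ and $\phi_\g=\Phi_{*e_\Phi}$ is the bracket-preserving isomorphism there. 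Combining with Lemma~\ref{lem:Des1}'s identity $[x,y]_{\g^{!}}=\Phi_{*e_\Phi}([x,y]_{\rm Lie})$ finishes the proof.

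The main obstacle I anticipate is purely bookkeeping: correctly matching the powers of $\Phi$ (the $\Phi^{-3},\Phi^{-2},\Phi^{-1}$ appearing in the statement) against the twists produced each time one converts a $\diamond$-product into a $\cdot$-product via $u\diamond v=\Phi(u\cdot v)$ and each time one pushes $\Phi^{-1}$ past $\exp$ or past $\cdot$. The conceptual content is just that $\Hexp$ is the Lie exponential composed with $\Phi$, that $\diamond$ is the Lie product composed with $\Phi$, and that $[\cdot,\cdot]_{\g^{!}}$ is the Lie bracket composed with $\Phi_{*e_\Phi}$ — so once the twists are organized, everything collapses to the standard second-derivative-of-the-group-commutator formula, and the particular exponents in the statement are exactly what is needed to make the twists cancel down to a single outer $\Phi_{*e_\Phi}$.
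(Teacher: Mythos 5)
Your strategy is exactly the paper's: convert every $\diamond$ into the underlying Lie product via $a\diamond b=\Phi(a\cdot b)$, use $\Hexp(x)=\Phi(\exp(x))$, reduce to the classical mixed-second-derivative-of-the-group-commutator formula, and finish with Lemma~\ref{lem:Des1}. The strategy is sound, but your bookkeeping goes wrong at the very first conversion: $\Hexp(sx)\diamond\Hexp(ty)=\Phi\big(\Phi(\exp(sx))\cdot\Phi(\exp(ty))\big)=\Phi^{2}(\exp(sx)\cdot\exp(ty))$, not $\Phi(\exp(sx)\cdot\exp(ty))$. With the correct power, the twists cancel exactly rather than merely ``approximately'': $\Phi^{-3}(\Hexp(sx)\diamond\Hexp(ty))=\Phi^{-1}(\exp(sx)\cdot\exp(ty))$, the next $\diamond$-product yields $\exp(sx)\cdot\exp(ty)\cdot\exp(-sx)$ on the nose, and the whole expression collapses to $\Phi(\exp(sx)\cdot\exp(ty)\cdot\exp(-sx)\cdot\exp(-ty))$ with no residual inner twist. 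Hence the mixed derivative is $\Phi_{*e_\Phi}([X,Y](e_\Phi))=[x,y]_{\g^{!}}$ by Lemma~\ref{lem:Des1}; your displayed final answer $\Phi_{*e_\Phi}\big(\big[\Phi_{*e_\Phi}^{-1}x,\Phi_{*e_\Phi}^{-1}y\big]\big)$, with leftover $\Phi_{*}^{-1}$'s inside the bracket, is an artifact of the propagated slip and would not equal $[x,y]_{\g^{!}}$ in general. Relatedly, the closing appeal to Lemma~\ref{RemarkS6} is misplaced: in this theorem there is no a priori Hom-Lie bracket being untwisted; the only identification needed is the one Lemma~\ref{lem:Des1} already provides.
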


\begin{proof} Let us denote
\begin{equation*}
\label{eq:36}\Omega_{(x, y)}(t, s):=\big(\Phi^{-3}(\Hexp(sx)\diamond \Hexp(ty)) \diamond \Phi^{-2}(\Hexp(-sx))\big) \diamond \Phi^{-1}(\Hexp(-ty))
\end{equation*}
for all $x,y\in \g^{!}$. From Remark~\ref{Rem:LIE-HLIE}, the triple $(G,\cdot,e_{\Phi})$ is a Lie group and $\g=\g^{!}$, where $(\g,[\cdot,\cdot]_{\g})$ is the Lie algebra of the Lie group $(G,\cdot,e_{\Phi})$. Next, we use~\eqref{eq:35}, Theorem~\ref{thm:imp1}, and Lemma~\ref{lem:Des1} to obtain the following expression:
\begin{gather*}
\frac{{\rm d}}{{\rm d}t}\frac{{\rm d}}{{\rm d}s}\Big|_{t=0,s=0}\Omega_{(x, y)}(t, s)
 = \frac{{\rm d}}{{\rm d}t}\frac{{\rm d}}{{\rm d}s}\Big|_{t=0,s=0}\Phi(\exp(sx)\cdot\exp(ty)\cdot\exp(-sx)\cdot\exp(-ty))\\
\hphantom{\frac{{\rm d}}{{\rm d}t}\frac{{\rm d}}{{\rm d}s}\Big|_{t=0,s=0}\Omega_{(x, y)}(t, s)}{} = \Phi_{*e_\Phi }[x,y]_{\g} =[x, y]_{\g^{!}}.
\end{gather*}
The proof is finished.
\end{proof}

\begin{ex}Let $V$ be a real vector space. Let us recall from Example~\ref{HGrp:GL(V)} that the tuple $({\rm GL}(V),\diamond,\beta, \Ad_{\beta})$ is a Hom-Lie group. Then the triple $\big(\gl(V), [\cdot,\cdot]_{\gl(V)}, \Psi_{\gl(V)}\big)$ is the associated Hom-Lie algebra, where the bracket is given by
\[ [x,y]_{\gl(V)}=\beta\circ x\circ\beta^{-1} \circ y\circ\beta^{-1}-\beta\circ y\circ\beta^{-1}\circ x\circ \beta^{-1},\]
and $ \Psi_{\gl(V)}(x)=\beta\circ x \circ\beta^{-1}$.
\end{ex}

\begin{pro}\label{thm:+}
A map $f\colon (G,\diamond_G,e_\Phi,\Phi)\rightarrow (H,\diamond_H,e_\Psi,\Psi)$ is a weak homomorphism of Hom-Lie groups if and only if $f\colon (G,\cdot_G,e_\Phi)\rightarrow (H,\cdot_H,e_\Psi)$ is a Lie group homomorphism.
\end{pro}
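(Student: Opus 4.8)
The plan is to strip both notions down to purely algebraic identities and observe that they are carried into one another by the change of variables built from the diffeomorphisms $\Phi$ and $\Psi$. Recall from Remark~\ref{Rem:LIE-HLIE} that the associated Lie group products are $a\cdot_G b=\Phi^{-1}(a\diamond_G b)$ and $a\cdot_H b=\Psi^{-1}(a\diamond_H b)$, that these structures live on the same underlying manifolds as $G$ and $H$, and that $\Phi$, $\Psi$ are diffeomorphisms; in particular ``$f$ smooth'' means the same thing in the Hom-Lie picture and in the Lie picture, so only the multiplicativity conditions need to be compared. Note also that $f(e_\Phi)=e_\Psi$ is part of the definition of a weak homomorphism of Hom-Lie groups and is automatic for a homomorphism of groups, so this normalization clause may be set aside.

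First I would take the weak-homomorphism identity $\Psi\circ f(x\diamond_G y)=(f\circ\Phi(x))\diamond_H(f\circ\Phi(y))$ and, using that $\Phi$ is a bijection of $G$, note that it holds for all $x,y\in G$ precisely when it holds after the substitution $x=\Phi^{-1}(a)$, $y=\Phi^{-1}(b)$ for all $a,b\in G$. Axiom~(i) of Definition~\ref{Hom-group} gives $\Phi^{-1}(a)\diamond_G\Phi^{-1}(b)=\Phi^{-1}(a\diamond_G b)=a\cdot_G b$, so the left-hand side becomes $\Psi(f(a\cdot_G b))$ and the right-hand side becomes $f(a)\diamond_H f(b)$; applying $\Psi^{-1}$ (again a bijection) turns the identity into $f(a\cdot_G b)=\Psi^{-1}(f(a)\diamond_H f(b))=f(a)\cdot_H f(b)$, which is exactly the assertion that $f$ is a homomorphism of the associated Lie groups. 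Reading this chain of equalities in reverse --- apply $\Psi$, rewrite $a\cdot_G b=\Phi^{-1}(a\diamond_G b)$, then substitute $a=\Phi(x)$, $b=\Phi(y)$ and use $\Phi(x)\diamond_G\Phi(y)=\Phi(x\diamond_G y)$ --- recovers the weak-homomorphism identity from the Lie group homomorphism property, which gives the converse.

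There is essentially no obstacle here: the entire argument is a single reversible change of variables, so I would not expect any genuine calculation beyond the substitutions above. The only points deserving a moment's care are to invoke bijectivity of $\Phi$ (to pass between ``for all $x,y$'' and ``for all $a,b$'') and of $\Psi$ (to strip off the outer map) in both directions, and to record explicitly that the smoothness hypothesis on $f$ is unaffected because a Hom-Lie group and its associated Lie group share the same smooth manifold structure.
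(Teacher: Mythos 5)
Your argument is correct and is essentially the paper's own proof: the paper compresses the whole equivalence into the single identity $f\big(\Phi^{-1}(a)\diamond_G\Phi^{-1}(b)\big)=\Psi^{-1}\big(f(a)\diamond_H f(b)\big)$, which is exactly your change of variables $x=\Phi^{-1}(a)$, $y=\Phi^{-1}(b)$ combined with the multiplicativity of $\Phi$ and the definition $a\cdot b=\Phi^{-1}(a\diamond b)$. You merely make explicit the reversibility of the substitution and the remarks about $f(e_\Phi)=e_\Psi$ and smoothness, which the paper leaves implicit.
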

\begin{proof} Let us assume that $f\colon (G,\diamond_G,e_\Phi,\Phi)\rightarrow (H,\diamond_H,e_\Psi,\Psi)$ is a weak homomorphism of Hom-Lie groups. Then, we have
\begin{equation}
\label{eq:1000}f\big(\Phi^{-1}(a)\diamond_G\Phi^{-1}(b)\big)=\Psi^{-1}\big(f(a)\diamond f(b)\big),\qquad\forall\, a,b\in G,
\end{equation}
which implies that $f\colon (G,\cdot_G,e_\Phi)\rightarrow (H,\cdot_H,e_\Psi)$ is a Lie group homomorphism.

Conversely, let $f\colon (G,\cdot_G,e_\Phi)\rightarrow (H,\cdot_H,e_\Psi)$ be a Lie group homomorphism. Then, by~\eqref{eq:1000}, we deduce that~$f$ is a weak homomorphism from $(G,\diamond_G,e_\Phi,\Phi)$ to $(H,\diamond_H,e_\Psi,\Psi)$.
\end{proof}

\begin{thm}\label{thm:99}
Let $f\colon (G,\diamond_G,e_\Phi,\Phi)\rightarrow (H,\diamond_H,e_\Psi,\Psi)$ be a weak homomorphism of Hom-Lie groups. For any $x\in \g^{!}$, we have
\begin{equation*}
f\circ\sigma^{!}_{x}=\sigma^{!}_y,
\end{equation*}
where $y=\Psi^{-1}_{*e_\Psi}f_{*e_\Phi}\Phi_{*e_\Phi}(x)$ and $\sigma^{!}_x$, $\sigma^{!}_y$ are the one-parameter Hom-Lie subgroups of the Hom-Lie groups $(G,\diamond_G,e_\Phi,\Phi)$ and $(H,\diamond_H,e_\Psi,\Psi)$ determined by~$x$ and~$y$ respectively.
\end{thm}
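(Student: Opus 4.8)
The plan is to pass to the associated Lie groups of Remark~\ref{Rem:LIE-HLIE} and then use the naturality of the Lie-group exponential map.

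First I would record the two ingredients that make the reduction work. By Proposition~\ref{thm:+}, the weak homomorphism $f$ is the same datum as a Lie group homomorphism $f\colon(G,\cdot_G,e_\Phi)\to(H,\cdot_H,e_\Psi)$. Moreover, from $a\cdot_G b=\Phi^{-1}(a\diamond_G b)$ together with axiom~(i) of a Hom-group one checks directly that $\Phi$ is a Lie group automorphism of $(G,\cdot_G,e_\Phi)$, and likewise $\Psi$ is a Lie group automorphism of $(H,\cdot_H,e_\Psi)$; in particular $\Phi(e_\Phi)=e_\Phi$ and $\Psi(e_\Psi)=e_\Psi$ (Proposition~\ref{pro:0}), so $f_{*e_\Phi}$, $\Phi_{*e_\Phi}$, $\Psi_{*e_\Psi}$ are all defined on the appropriate tangent spaces and $y=\Psi^{-1}_{*e_\Psi}f_{*e_\Phi}\Phi_{*e_\Phi}(x)$ genuinely lies in $\h^{!}=T_{e_\Psi}H$.

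Next I would compute $f\circ\sigma^{!}_x$ directly. Writing $\exp_G$ and $\exp_H$ for the exponential maps of $(G,\cdot_G,e_\Phi)$ and $(H,\cdot_H,e_\Psi)$, Theorem~\ref{thm:imp1} gives $\sigma^{!}_x(t)=\Phi(\exp_G(tx))$. Applying the naturality identity $g\circ\exp=\exp\circ g_{*e}$ first to the automorphism $\Phi$ and then to the homomorphism $f$, I get
\[
f\big(\sigma^{!}_x(t)\big)=f\big(\exp_G(t\Phi_{*e_\Phi}(x))\big)=\exp_H\big(t\,f_{*e_\Phi}\Phi_{*e_\Phi}(x)\big).
\]
On the other hand, again by Theorem~\ref{thm:imp1} and naturality of $\exp_H$ under the automorphism $\Psi$,
\[
\sigma^{!}_y(t)=\Psi\big(\exp_H(ty)\big)=\exp_H\big(t\,\Psi_{*e_\Psi}(y)\big)=\exp_H\big(t\,\Psi_{*e_\Psi}\Psi^{-1}_{*e_\Psi}f_{*e_\Phi}\Phi_{*e_\Phi}(x)\big)=\exp_H\big(t\,f_{*e_\Phi}\Phi_{*e_\Phi}(x)\big).
\]
Comparing the last two displays yields $f\circ\sigma^{!}_x=\sigma^{!}_y$.

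There is no deep obstacle here once Proposition~\ref{thm:+} is in hand; the only point requiring care is the bookkeeping — keeping straight which exponential map, which differential, and which multiplication ($\diamond$ versus $\cdot$) each symbol refers to, and checking at the outset that $\Phi$ and $\Psi$ really are automorphisms of the associated Lie groups, so that the naturality squares $\Phi\circ\exp_G=\exp_G\circ\Phi_{*e_\Phi}$ and $\Psi\circ\exp_H=\exp_H\circ\Psi_{*e_\Psi}$ are available. Everything else is the standard identity $f\circ\exp=\exp\circ f_{*e}$ for Lie group homomorphisms together with the chain rule.
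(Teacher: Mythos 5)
Your proposal is correct and follows essentially the same route as the paper: both reduce to the associated Lie groups via Proposition~\ref{thm:+} and Theorem~\ref{thm:imp1} and then invoke the naturality of the Lie-group exponential under $\Phi$, $f$, and $\Psi$ to identify $f\circ\sigma^{!}_x$ with $\sigma^{!}_y$. The only cosmetic difference is that the paper first verifies separately that $f\circ\sigma^{!}_x$ satisfies the one-parameter Hom-Lie subgroup identity before computing, whereas your direct comparison of $f(\sigma^{!}_x(t))$ and $\sigma^{!}_y(t)$ makes that preliminary step unnecessary.
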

\begin{proof} By the definition of the one-parameter Hom-Lie subgroup $\sigma^{!}_{x}$, it follows that
\begin{equation*}
f\big(\sigma^{!}_{x}(t+s)\big)=f\big(\Phi^{-1}\big(\sigma^{!}_{x}(t)\big)\diamond_G\Phi^{-1}\big(\sigma^{!}_{x}(s)\big)\big) =\Psi^{-1}\big(f\big(\big(\sigma^{!}_{x}(t)\big)\big)\diamond_Hf\big(\big(\sigma^{!}_{x}(s)\big)\big)\big),
\end{equation*}
i.e.,
\begin{equation*}
\Psi\circ f\circ\sigma^{!}_{x}(t+s)= f\circ\sigma^{!}_{x}(t)\diamond_Hf\circ\sigma^{!}_{x}(s).
\end{equation*}
Thus, $f\circ\sigma^{!}_{x}$ is a one-parameter Hom-Lie subgroup of the Hom-Lie group $(H,\diamond_H,e_\Psi,\Psi).$

From Theorem \ref{thm:imp1}, we obtain the following expressions
\begin{equation*}
f\circ\sigma^{!}_{x}(t)=f\big(\Phi(\exp(tx))\big),
\end{equation*}
and
\begin{equation*}
f\big(\Phi(\exp(tx))\big)=\exp\big(tf_{*e_\Phi}\Phi_{*e_\Phi}(x)\big)=\Psi\big(\exp\big(t\Psi^{-1}_{*e_\Psi}f_{*e_\Phi}\Phi_{*e_\Phi}(x)\big)\big), \end{equation*}
which implies that $f\circ\sigma^{!}_{x}(t)=\sigma^{!}_{y}(t),$ where $y=\Psi^{-1}_{*e_\Psi}f_{*e_\Phi}\Phi_{*e_\Phi}(x)$.
\end{proof}

Let $f\colon (G,\diamond_G,e_\Phi,\Phi)\rightarrow (H,\diamond_H,e_\Psi,\Psi)$ be a weak homomorphism of Hom-Lie groups. Then, we define a map $f_{\triangleright}\colon \g^{!}\rightarrow\h^{!}$ by
\begin{equation*}
f_{\triangleright}(x)=\Psi^{-1}_{*e_\Psi}f_{*e_\Phi}\Phi_{*e_\Phi}(x),\qquad\forall\, x\in \g^{!}.
\end{equation*}
\begin{thm}\label{thm:corres-weak morphisms}
With the above notations, the map $f_{\triangleright}\colon \big(\g^{!},[\cdot,\cdot]_{\g^{!}},\phi_{\g^{!}}\big) \rightarrow\big(\h^{!},[\cdot,\cdot]_{\h^{!}},\psi_{\h^{!}}\big)$ is a~weak homomorphism of Hom-Lie algebras.
\end{thm}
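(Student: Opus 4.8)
The strategy is to transfer the problem to ordinary Lie theory via the dictionary set up above. By Proposition~\ref{thm:+}, the hypothesis that $f$ is a weak homomorphism of Hom-Lie groups is equivalent to saying that $f\colon (G,\cdot_G,e_\Phi)\to (H,\cdot_H,e_\Psi)$ is a homomorphism of the associated Lie groups of Remark~\ref{Rem:LIE-HLIE}; since $f(e_\Phi)=e_\Psi$, its differential $f_{*e_\Phi}\colon \g\to\h$ is then a homomorphism of the corresponding Lie algebras, i.e.\ it intertwines the Lie brackets $[\cdot,\cdot]_{\rm Lie}$ of Lemma~\ref{RemarkS6} on the two sides. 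Moreover $\Phi$ and $\Psi$ are automorphisms of $(G,\cdot_G,e_\Phi)$ and $(H,\cdot_H,e_\Psi)$, so the linear isomorphisms $\Phi_{*e_\Phi}$, $\Psi_{*e_\Psi}$ and their inverses are automorphisms of those Lie algebras; in particular each of them commutes with the respective $[\cdot,\cdot]_{\rm Lie}$.

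Next I would record the concrete description of the Hom-Lie algebra structures furnished by Lemma~\ref{lem:Des1}: under the identification $\g=\g^{!}$ one has $[x,y]_{\g^{!}}=\Phi_{*e_\Phi}[x,y]_{\rm Lie}$ and $\phi_{\g^{!}}=\Phi_{*e_\Phi}$, and likewise $[u,v]_{\h^{!}}=\Psi_{*e_\Psi}[u,v]_{\rm Lie}$ and $\psi_{\h^{!}}=\Psi_{*e_\Psi}$. With these formulas in hand, verifying that $f_{\triangleright}=\Psi^{-1}_{*e_\Psi}f_{*e_\Phi}\Phi_{*e_\Phi}$ is a weak homomorphism of Hom-Lie algebras is a direct computation: substituting into the two sides of
\begin{equation*}
\psi_{\h^{!}}\circ f_{\triangleright}[x,y]_{\g^{!}}=[f_{\triangleright}(\phi_{\g^{!}}(x)),f_{\triangleright}(\phi_{\g^{!}}(y))]_{\h^{!}}
\end{equation*}
and pushing the maps $\Phi_{*e_\Phi}^{\pm1}$, $\Psi_{*e_\Psi}^{\pm1}$, $f_{*e_\Phi}$ through the brackets using the intertwining properties above, one finds that both sides reduce to $[f_{*e_\Phi}\Phi_{*e_\Phi}^{2}x,\,f_{*e_\Phi}\Phi_{*e_\Phi}^{2}y]_{\rm Lie}$, which proves the claim.

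The computation is entirely routine, so there is no real obstacle; the only thing to be careful about is the bookkeeping with differentials at the two base points $e_\Phi$ and $e_\Psi$, together with the observation that the argument nowhere requires $f\circ\Phi=\Psi\circ f$ — which would fail for a general weak homomorphism. What is actually used is only that $f_{*e_\Phi}$ respects the two Lie brackets and that the twisting maps are Lie algebra automorphisms, so no compatibility between $f$ and the twists $\Phi$, $\Psi$ enters. Alternatively, one could argue more intrinsically by combining Theorem~\ref{thm:99}, which gives $f\circ\sigma^{!}_{x}=\sigma^{!}_{f_{\triangleright}(x)}$, with the bracket formula of Theorem~\ref{HLIEprod-HLIEbrac}; but the reduction to Lie theory above is shorter.
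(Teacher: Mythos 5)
Your proposal is correct and follows essentially the same route as the paper: both reduce the statement to ordinary Lie theory via Proposition~\ref{thm:+} (so that $f_{*e_\Phi}$ is a homomorphism of the Lie algebras $[\cdot,\cdot]_{\rm Lie}$ of Lemma~\ref{RemarkS6}), use the identifications $[\cdot,\cdot]_{\g^{!}}=\Phi_{*e_\Phi}[\cdot,\cdot]_{\rm Lie}$ and $\phi_{\g^{!}}=\Phi_{*e_\Phi}$ from Lemma~\ref{lem:Des1}, and verify the weak-homomorphism identity by pushing $\Phi_{*e_\Phi}^{\pm1}$, $\Psi_{*e_\Psi}^{\pm1}$ through the brackets. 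Your computation that both sides equal $\big[f_{*e_\Phi}\Phi_{*e_\Phi}^{2}x,\,f_{*e_\Phi}\Phi_{*e_\Phi}^{2}y\big]_{\rm Lie}$ checks out and matches the paper's displayed chain of equalities.
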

\begin{proof} For all $x, y\in \g^{!}$, it follows that
\begin{equation*}
f_{\triangleright}\big([x, y]_{\g^{!}}\big)=\Psi^{-1}_{*e_\Psi}f_{*e_\Phi}\Phi_{*e_\Phi}\big([x, y]_{\g^{!}}\big)=\Psi^{-1}_{*e_\Psi}f_{*e_\Phi}\Phi^{2}_{*e_\Phi}([x, y]).
\end{equation*}
By using Proposition \ref{thm:+}, we have
\begin{gather*}
\Psi^{-1}_{*e_\Psi}f_{*e_\Phi}\Phi^{2}_{*e_\Phi}([x, y]) = \big[\Psi^{-1}_{*e_\Psi}f_{*e_\Phi}\Phi^{2}_{*e_\Phi}(x), \Psi^{-1}_{*e_\Psi}f_{*e_\Phi}\Phi^{2}_{*e_\Phi}(y)\big]\\
\hphantom{\Psi^{-1}_{*e_\Psi}f_{*e_\Phi}\Phi^{2}_{*e_\Phi}([x, y])}{} = \Psi_{*e_\Psi}\big[\Psi^{-1}_{*e_\Psi}f_{\triangleright}\Phi_{*e_\Phi}(x), \Psi^{-1}_{*e_\Psi}f_{\triangleright}\Phi_{*e_\Phi}(y)\big],
\end{gather*}
i.e.,
\begin{equation*}\label{eq:7}
f_{\triangleright}([x, y]_{\Phi})=\big[\Psi^{-1}_{*e_\Psi}f_{\triangleright}\Phi_{*e_\Phi}(x), \Psi^{-1}_{*e_\Phi}f_{\triangleright}\Phi_{*e_\Phi}(y)\big]_{\Psi}.
\end{equation*}
Thus,
$
\psi f_{\triangleright}([x, y]_{\Phi})=[f_{\triangleright}\phi(x),f_{\triangleright}\phi(y)]_{\Psi},
$
which implies that $f_{\triangleright}\colon \big(\g^{!},[\cdot,\cdot]_{\Phi},\phi\big)\rightarrow \big(\h^{!},[\cdot,\cdot]_\Psi,\psi\big)$ is a~weak homomorphism of Hom-Lie algebras.
\end{proof}

\begin{thm}[universality of the Hexp map]\label{thm:after}
Let $f\colon (G,\diamond_G,e_\Phi,\Phi)\rightarrow(H,\diamond_H,e_\Psi,\Psi)$ be a~weak homomorphism of Hom-Lie groups. Then,
\begin{equation*}\label{eq:678}
f\big(\Hexp(x)\big)=\Hexp\big(f_{\triangleright}(x)\big),\qquad\forall\, x\in\g^{!},
\end{equation*}
i.e., the following diagram commutes:
\[
 \xymatrix{
 G \ar[r]^{f}
 & H \\
 \g^{!} \ar[u]^{\Hexp} \ar[r]^{f_{\triangleright}}
 & \h^{!}. \ar[u]_{\Hexp} }
\]
\end{thm}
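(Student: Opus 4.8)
The plan is to reduce everything to the classical universality of the Lie-group exponential map via the dictionary established earlier in the paper. First I would recall the two key identifications: by Remark~\ref{Rem:LIE-HLIE}, the Hom-Lie group $(G,\diamond,e_\Phi,\Phi)$ has an underlying Lie group $(G,\cdot_G,e_\Phi)$ with $a\cdot_G b=\Phi^{-1}(a\diamond_G b)$, and likewise $(H,\cdot_H,e_\Psi)$; and by Proposition~\ref{thm:+}, the weak homomorphism $f$ of Hom-Lie groups is precisely a Lie group homomorphism $f\colon(G,\cdot_G,e_\Phi)\to(H,\cdot_H,e_\Psi)$. Moreover $\g^{!}=\g=T_{e_\Phi}G$ and $\h^{!}=\h=T_{e_\Psi}H$ as vector spaces.

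Next I would unwind the definition of $\Hexp$. By \eqref{eq:35} and Theorem~\ref{thm:imp1}, $\Hexp(x)=\sigma^{!}_x(1)=\Phi(\exp(x))$, where $\exp$ is the exponential map of the Lie group $(G,\cdot_G,e_\Phi)$; similarly on the $H$ side, $\Hexp(y)=\Psi(\exp_H(y))$. Then I would compute directly:
\begin{equation*}
f\big(\Hexp(x)\big)=f\big(\Phi(\exp(x))\big).
\end{equation*}
Using Theorem~\ref{thm:99} (or re-deriving the identity $f(\Phi(\exp(tx)))=\exp_H(t f_{*e_\Phi}\Phi_{*e_\Phi}(x))=\Psi(\exp_H(t\Psi^{-1}_{*e_\Psi}f_{*e_\Phi}\Phi_{*e_\Phi}(x)))$ from the commutativity $f\circ\exp=\exp_H\circ f_{*e_\Phi}$ for Lie group homomorphisms and the fact that $\Phi$, $\Psi$ are Lie group automorphisms), evaluated at $t=1$, this equals $\Psi\big(\exp_H(f_{\triangleright}(x))\big)=\Hexp\big(f_{\triangleright}(x)\big)$, since $f_{\triangleright}(x)=\Psi^{-1}_{*e_\Psi}f_{*e_\Phi}\Phi_{*e_\Phi}(x)$ by definition. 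That chain of equalities is exactly the claimed commutativity of the diagram.

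The main obstacle, such as it is, is purely bookkeeping: one must be careful about where $\Phi$ versus $\Phi_{*e_\Phi}$ appears, and about the twist by $\Psi^{-1}_{*e_\Psi}$ in the definition of $f_{\triangleright}$, so that the automorphism $\Psi$ on the outside of $\exp_H$ matches up correctly. Everything else is a direct consequence of the classical naturality square $f\circ\exp_G=\exp_H\circ df_e$ for Lie group homomorphisms, transported through the Hom-group/Lie-group correspondence; in fact the statement is almost an immediate corollary of Theorem~\ref{thm:99} specialized to $t=1$, so the ``proof'' is really just recording that specialization together with the identity $\Hexp(x)=\Phi(\exp(x))$.
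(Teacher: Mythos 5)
Your proposal is correct and follows essentially the same route as the paper: both unwind $\Hexp(x)=\Phi(\exp(x))$ via Theorem~\ref{thm:imp1} and then invoke the identity $f(\Phi(\exp(tx)))=\Psi\big(\exp\big(t\Psi^{-1}_{*e_\Psi}f_{*e_\Phi}\Phi_{*e_\Phi}(x)\big)\big)$ from Theorem~\ref{thm:99}, specialized to $t=1$. No substantive differences to report.
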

\begin{proof} By the definition of $\Hexp$, it follows that
\begin{equation*}
\Hexp\big(f_{\triangleright}(x)\big)=\Psi\big(\exp\big({\Psi^{-1}_{*e_\Psi}f_{*e_\Phi}\Phi_{*e_\Phi}x}\big)\big),
\end{equation*}
and
\begin{equation*}
f\big(\Hexp(x)\big)=f(\Phi(\exp(x)))=\Psi\big(\exp\big({\Psi^{-1}_{*e_\Psi}f_{*e_\Phi}\Phi_{*e_\Phi}x}\big)\big).
\end{equation*}
Thus we have $f\big(\Hexp(x)\big)=\Hexp\big(f_{\triangleright}(x)\big),$ for all $~x\in\g^{!}$.
\end{proof}

\section{Actions of Hom-Lie groups and Hom-Lie algebras}\label{actionhom}

Let $(G,\diamond_G,e_\Phi,\Phi)$ be a Hom-Lie group and $M$ be a~smooth manifold. Let $\theta\colon G\times M\rightarrow M$ be a~smooth map that we denote by
 \begin{equation*}
 \theta(a, x)=a\odot x,\qquad \forall\, a\in G, x\in M.
 \end{equation*}
\begin{defi}

 The map $\theta\colon G\times M\rightarrow M$ is called an {\it action} of the Hom-Lie group $(G,\diamond_G,e_\Phi,\Phi)$ on the smooth manifold $M$ with respect to a map $\iota\in\Diff(M)$ if the following conditions are satisfied:
 \begin{itemize}\itemsep=0pt
 \item[(i)] $e\odot x=\iota(x)$, $\forall\, x\in M$;
 \item[(ii)] $(a\diamond_Gb)\odot x=\Phi(a)\odot\big(\iota^{-1}(\Phi(b)\odot x)\big)$, $\forall\, a,b\in G$, $x\in M$.
 \end{itemize}
We denote this action by $(G, \theta, M, \iota)$.
\end{defi}
For all $a\in G$, define $L_{a}\colon M\rightarrow M$ by
\begin{equation*}
L_{a}(x)=a\odot x=\theta(a,x),\qquad \forall\, x\in M.
\end{equation*}
Since $L_{e_\Phi}=\iota\in\Diff(M)$ and $L_{\Phi^{-1}(a\diamond a^{-1})}=L_{a}\circ\iota^{-1}\circ L_{a^{-1}}=\iota$, we have $L_{a}\circ\iota^{-1}\in\Diff(M)$, and thus $L_a\in\Diff(M).$ Let us define a map $L\colon G\rightarrow \Diff(M)$ by $L(a)=L_{a}$ for all $a\in G$.
\begin{thm}
With the above notations, the map $\theta\colon G\times M\rightarrow M$ is an action of the Hom-Lie group $(G,\diamond_G,e_\Phi,\Phi)$ on $M$ with respect to $\iota\in \Diff(M)$ if and only if the map $L\colon G\rightarrow \Diff(M)$ is a weak homomorphism from the Hom-Lie group $(G,\diamond_G,e_\Phi,\Phi)$ to the Hom-Lie group $(\Diff(M),\diamond,\iota,\Ad_{\iota}).$
\end{thm}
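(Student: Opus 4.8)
The plan is to unwind both sides of the defining identities for a weak homomorphism of Hom-Lie groups and observe that, under the explicit formulas for the Hom-group structure on $\Diff(M)$, they translate term by term into the two axioms of an action. We have already remarked that $L_a\circ\iota^{-1}\in\Diff(M)$ and hence $L_a\in\Diff(M)$ for every $a\in G$, so the map $L\colon G\rightarrow\Diff(M)$ is well defined; the content of the theorem is thus purely the equivalence of the two sets of identities, which I will verify by a chain of reversible rewritings.

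First I would dispose of the unit condition. By definition $L(e_\Phi)=L_{e_\Phi}$ is the map $x\mapsto e_\Phi\odot x$, and the Hom-unit of $(\Diff(M),\diamond,\iota,\Ad_{\iota})$ is $\iota$; hence $L(e_\Phi)=\iota$ holds if and only if $e_\Phi\odot x=\iota(x)$ for all $x\in M$, which is precisely axiom~(i) of an action.

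Next I would treat the multiplicativity condition. The requirement that $L$ be a weak homomorphism reads $\Ad_{\iota}\circ L(a\diamond_G b)=\big(L\circ\Phi(a)\big)\diamond\big(L\circ\Phi(b)\big)$ for all $a,b\in G$. Using $\Ad_{\iota}(f)=\iota\circ f\circ\iota^{-1}$ and $f\diamond g=\iota\circ f\circ\iota^{-1}\circ g\circ\iota^{-1}$ on $\Diff(M)$, the left-hand side becomes $\iota\circ L_{a\diamond_G b}\circ\iota^{-1}$ and the right-hand side becomes $\iota\circ L_{\Phi(a)}\circ\iota^{-1}\circ L_{\Phi(b)}\circ\iota^{-1}$. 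Since $\iota$ is a diffeomorphism we may cancel $\iota$ on the left and $\iota^{-1}$ on the right, obtaining the equivalent identity $L_{a\diamond_G b}=L_{\Phi(a)}\circ\iota^{-1}\circ L_{\Phi(b)}$ in $\Diff(M)$. Evaluating both sides at an arbitrary $x\in M$ gives $(a\diamond_G b)\odot x=\Phi(a)\odot\big(\iota^{-1}(\Phi(b)\odot x)\big)$, which is exactly axiom~(ii). Reading this chain of equivalences in both directions proves the theorem.

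I do not expect a genuine obstacle here, since every step is a reversible rewriting. The only points requiring care are keeping track of the twists, by $\Phi$ on $G$ and by $\iota$ on $M$, so that the cancellations land on the correct identity, and noting that those cancellations are legitimate precisely because $\iota\in\Diff(M)$ is invertible; the smoothness of $L$ needed for it to count as a morphism of Hom-Lie groups follows from the smoothness of $\theta$ and is already implicit in the setup of the Hom-Lie group $(\Diff(M),\diamond,\iota,\Ad_{\iota})$.
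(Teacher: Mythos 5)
Your proposal is correct and follows essentially the same route as the paper: both identify the unit condition $L(e_\Phi)=\iota$ with axiom (i) and reduce the weak-homomorphism identity $\Ad_{\iota}\circ L(a\diamond_G b)=L(\Phi(a))\diamond L(\Phi(b))$ to $L_{a\diamond_G b}=L_{\Phi(a)}\circ\iota^{-1}\circ L_{\Phi(b)}$, i.e., axiom (ii), the only cosmetic difference being that you phrase the argument as one chain of reversible rewritings while the paper writes out the two implications separately.
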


\begin{proof} Let us first assume that the map $\theta\colon G\times M\rightarrow \Diff(M)$ is an action of the Hom-Lie group $(G,\diamond_G,e_\Phi,\Phi)$ on $M$ with respect to the map $\iota\in \Diff(M)$. Then we have
\begin{equation*}
L(e_\Phi)=L_{e_\Phi}=\iota, \qquad L(a\diamond b)(x)=\theta(a\diamond b,x)=L(\Phi(a))\circ\iota^{-1}\circ L(\Phi(b))(x).
\end{equation*}
Thus, we have
\begin{equation*}
\Ad_{\iota}\circ L(a\diamond b)=L(\Phi(a))\diamond L(\Phi(b)),
\end{equation*}
which implies that the map $L\colon (G,\diamond_G,e_\Phi,\Phi)\rightarrow(\Diff(M),\diamond,\iota,\Ad_{\iota})$ is a weak homomorphism of Hom-Lie groups.

Conversely, let us assume that $L\colon (G,\diamond_G,e_\Phi,\Phi)\rightarrow(\Diff(M),\diamond,\iota,\Ad_{\iota})$ is a weak homomorphism of Hom-Lie groups. Then, it follows that
\begin{equation}\label{eq:use2}
\theta(e_\Phi,x)=L(e_\Phi)(x)=\iota(x),\qquad \forall\, x\in M,
\end{equation}
and
\begin{equation*}
\big(\Ad_{\iota}\circ L(a\diamond b)\big)(x)=\iota\circ L(\Phi(a))\circ\iota^{-1}\circ L(\Phi(b))\circ\iota^{-1}(x)=\big(\iota\circ L(a\diamond b)\circ\iota^{-1}\big)(x),
\end{equation*}
which implies that
\[ L(a\diamond b)(x)=L(\Phi(a))\circ\iota^{-1}\circ L(\Phi(b))(x).\]
Therefore, we get the following identity:
\begin{equation}\label{eq:use3}
\theta(a\diamond b,x)=\Phi(a)\odot\big(\iota^{-1}(\Phi(b)\odot x)\big).
\end{equation}
By \eqref{eq:use2} and \eqref{eq:use3}, we deduce that $\theta\colon G\times M \rightarrow M$ is an action of the Hom-Lie group $(G,\diamond_G,e_\Phi,\Phi)$ on the smooth manifold $M$ with respect to $\iota\in\Diff(M)$.
\end{proof}

If $(G,\diamond_G,e_\Phi,\Phi)$ is a Hom-Lie group, then let us define a map $\widetilde{\Ad}\colon G\times G\rightarrow G$ by
\begin{equation*}\label{eq:defi1}
\widetilde{\Ad}(a,b)=\Phi^{-1}(a\diamond_G b)\diamond_G a^{-1},\qquad \forall\, a,b\in G.
\end{equation*}

\begin{lem}\label{lem:ad-action}
The map $\widetilde{\Ad}\colon G\times G\rightarrow G$ gives an action of the Hom-Lie group $(G,\diamond_G,e_\Phi,\Phi)$ on~$G$ with respect to the map $\Phi\in \Diff(G)$.
\end{lem}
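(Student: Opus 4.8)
The plan is to reduce the statement to the classical fact that a Lie group acts on itself by conjugation, in the spirit of Remark~\ref{Rem:LIE-HLIE} and Proposition~\ref{thm:+}. Set $L_a:=\widetilde{\Ad}(a,\cdot)\colon G\to G$ and $L\colon G\to\Diff(G)$, $L(a)=L_a$. By the theorem proved immediately before this lemma, applied with $M=G$ and $\iota=\Phi$, once we know that $L$ takes values in $\Diff(G)$ it suffices to show that $L$ is a weak homomorphism of Hom-Lie groups from $(G,\diamond_G,e_\Phi,\Phi)$ to $(\Diff(G),\diamond,\Phi,\Ad_\Phi)$.

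The first step is to rewrite $L_a$ in terms of the associated Lie group. By Remark~\ref{Rem:LIE-HLIE}, $(G,\cdot,e_\Phi)$ with $a\cdot b=\Phi^{-1}(a\diamond_G b)$, equivalently $a\diamond_G b=\Phi(a\cdot b)$, is a Lie group; by Proposition~\ref{pro:0}, the Hom-inverse $a^{-1}$ coincides with the $\cdot$-inverse of $a$ (both are characterized by $a\diamond_G a^{-1}=e_\Phi$, using $\Phi(e_\Phi)=e_\Phi$); and $\Phi$ is an automorphism of $(G,\cdot,e_\Phi)$. Hence $\widetilde{\Ad}(a,b)=\Phi^{-1}(a\diamond_G b)\diamond_G a^{-1}=(a\cdot b)\diamond_G a^{-1}=\Phi\big((a\cdot b)\cdot a^{-1}\big)=\Phi(C_a(b))$, where $C_a(b)=a\cdot b\cdot a^{-1}$ is conjugation in $(G,\cdot,e_\Phi)$. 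Since translations and inversion in a Lie group are diffeomorphisms, $C_a\in\Diff(G)$, and therefore $L_a=\Phi\circ C_a\in\Diff(G)$, as required.

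Next I would apply Proposition~\ref{thm:+}: $L$ is a weak homomorphism of Hom-Lie groups if and only if $L\colon(G,\cdot,e_\Phi)\to(\Diff(G),\cdot,\Phi)$ is a homomorphism of Lie groups, where $(\Diff(G),\cdot,\Phi)$ denotes the Lie group associated (again by Remark~\ref{Rem:LIE-HLIE}) to $(\Diff(G),\diamond,\Phi,\Ad_\Phi)$; unwinding the definitions of $\diamond$ and $\Ad_\Phi$ one finds the product $f\cdot g=f\circ\Phi^{-1}\circ g$, with unit $\Phi$. Using that ordinary conjugation $C\colon(G,\cdot,e_\Phi)\to(\Diff(G),\circ,\Id)$ is a group homomorphism, i.e.\ $C_{a\cdot b}=C_a\circ C_b$ and $C_{e_\Phi}=\Id$, one computes $L(a)\cdot L(b)=(\Phi\circ C_a)\circ\Phi^{-1}\circ(\Phi\circ C_b)=\Phi\circ C_a\circ C_b=\Phi\circ C_{a\cdot b}=L(a\cdot b)$ and $L(e_\Phi)=\Phi\circ\Id=\Phi$. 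Thus $L$ is a Lie group homomorphism, and tracing back through Proposition~\ref{thm:+} and the preceding theorem yields the claim.

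I expect the only delicate point to be the bookkeeping of the three product structures in play — the Hom product $\diamond_G$ on $G$, the associated Lie product $\cdot$ on $G$, and the associated Lie product $f\cdot g=f\circ\Phi^{-1}\circ g$ on $\Diff(G)$ — together with the identifications that $\Phi$ is a $\cdot$-automorphism and that the Hom-inverse equals the $\cdot$-inverse; once these are fixed the computation is routine. As an alternative that avoids Proposition~\ref{thm:+}, one may verify the two axioms of an action directly from $\widetilde{\Ad}(a,b)=\Phi(C_a(b))$: axiom~(i), $\widetilde{\Ad}(e_\Phi,x)=\Phi(x)$, is immediate since $e_\Phi\diamond_G x=\Phi(x)$ and $e_\Phi^{-1}=e_\Phi$, and axiom~(ii) follows by expanding both $\widetilde{\Ad}(a\diamond_G b,x)$ and $\Phi(a)\odot\big(\Phi^{-1}(\Phi(b)\odot x)\big)$ in terms of $\cdot$ and $\Phi$, both of which collapse to $\Phi^{2}\big(a\cdot b\cdot\Phi^{-1}(x)\cdot b^{-1}\cdot a^{-1}\big)$.
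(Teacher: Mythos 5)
Your proposal is correct, and it takes a genuinely different route from the paper. The paper proves the lemma by a direct verification of the two action axioms entirely inside the $\diamond_G$-calculus: axiom (i) is immediate, and axiom (ii) is obtained by a fairly long chain of rewritings of $\Phi(a)\odot\big(\Phi^{-1}(\Phi(b)\odot x)\big)$ using Hom-associativity, the multiplicativity of $\Phi$, and $(u\diamond v)^{-1}=v^{-1}\diamond u^{-1}$, until it matches $\Phi^{-1}\big((a\diamond_G b)\diamond_G x\big)\diamond_G(a\diamond_G b)^{-1}$. You instead pass to the associated Lie group $(G,\cdot,e_\Phi)$ of Remark~\ref{Rem:LIE-HLIE} and establish the clean identity $\widetilde{\Ad}(a,\cdot)=\Phi\circ C_a$, where $C_a$ is ordinary conjugation; your supporting observations (that the Hom-inverse coincides with the $\cdot$-inverse, that $\Phi$ is a $\cdot$-automorphism, and that the associated product on $\Diff(G)$ is $f\cdot g=f\circ\Phi^{-1}\circ g$) all check out, and both of your closing computations — the reduction via the preceding theorem and Proposition~\ref{thm:+} to $C_{a\cdot b}=C_a\circ C_b$, and the direct verification in which both sides of axiom (ii) collapse to $\Phi^{2}\big(a\cdot b\cdot\Phi^{-1}(x)\cdot b^{-1}\cdot a^{-1}\big)$ — are accurate. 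You were also right to note that $L_a\in\Diff(G)$ must be checked before the converse direction of the action/weak-homomorphism theorem can be invoked, since the paper only derives that fact under the assumption that $\theta$ is already an action. What your approach buys is conceptual transparency (the Hom-adjoint action is just $\Phi$ composed with classical conjugation) and shorter computations; what the paper's approach buys is a self-contained argument within the Hom-group axioms that does not lean on the associated ordinary Lie group, and in particular Proposition~\ref{thm:+} is being applied with $H=\Diff(G)$, which is only a Lie group in a formal sense — though the relevant argument there is purely algebraic, so nothing is lost.
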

\begin{proof} For all $x\in G$, we have
\begin{equation*}
\widetilde{\Ad}(e_{\Phi},x)=\Phi^{-1}(e_{\Phi}\diamond_G x)\diamond_G e^{-1}_{\Phi}=\Phi(x),
\end{equation*}
and
\begin{equation*}
\widetilde{\Ad}(a\diamond_G b,x)=\Phi^{-1}\big((a\diamond_G b)\diamond_G x\big)\diamond_G (a\diamond_G b)^{-1},\qquad \forall\, a,b\in G.
\end{equation*}
Let us denote $a\odot b:=\widetilde{\Ad}(a,b)$, then we get the following expression:
\begin{align*}
\Phi(a)\odot\big(\Phi^{-1}(\Phi(b)\odot x)\big)
&=\Phi(a)\odot\big(\Phi^{-1}\big(\Phi^{-1}(\Phi(b)\diamond_G x)\diamond_G (\Phi(b))^{-1}\big)\big)\\
&=\Phi(a)\odot\big(\big(\Phi^{-1}(b)\diamond_G \Phi^{-2}(x)\big)\diamond_G b^{-1}\big)\\
&=\Phi^{-1}\big(\Phi(a)\diamond_G \big(\big(\Phi^{-1}(b)\diamond_G \Phi^{-2}(x)\big)\diamond_G b^{-1}\big)\diamond_G(\Phi(a))^{-1}\\
&= \big(a\diamond_G \big(\Phi^{-1}(b)\diamond_G \big(\Phi^{-3}(x)\diamond_G \big(\Phi^{-2}(b)\big)^{-1}\big)\big)\big)\diamond_G(\Phi(a))^{-1}\\
&=\big(\Phi^{-1}(a\diamond_G b)\diamond_G \big(\Phi^{-2}(x)\diamond_G \big(\Phi^{-1}(b)\big)^{-1}\big)\big)\diamond_G \Phi(a)^{-1}\\
&=\big(\big(\Phi^{-2}(a\diamond_G b)\diamond_G \Phi^{-2}(x)\big)\diamond_G b^{-1}\big)\diamond_G \Phi(a)^{-1}\\
&=\Phi^{-1}\big((a\diamond_G b)\diamond x\big)\diamond_G (a\diamond b)^{-1},
\end{align*}
which implies that
\begin{gather*} \widetilde{\Ad}(a\diamond_G b,x)=\Phi(a)\odot\big(\Phi^{-1}(\Phi(b)\odot x)\big)\\
\hphantom{\widetilde{\Ad}(a\diamond_G b,x)}{}
=\widetilde{\Ad}\big(\Phi(a),\big(\Phi^{-1}\big(\widetilde{\Ad}(\Phi(b),x)\big)\big)\big),\qquad \forall\, a,b,x\in G.
\end{gather*}
Thus, the map $\widetilde{\Ad}\colon G\times G\rightarrow G$ gives an action of the Hom-Lie group $(G,\diamond_G,e_\Phi,\Phi)$ on the underlying manifold~$G$ with respect to the map $\Phi\in \Diff(G)$.
\end{proof}

\begin{defi}
Let $(G,\diamond_G,e_\Phi,\Phi)$ be a Hom-Lie group, $V$ be a vector space, and $\beta\in {\rm GL}(V)$. Then, a weak homomorphism of Hom-Lie groups \[ \rho\colon \ (G,\diamond_G,e_\Phi,\Phi)\rightarrow ({\rm GL}(V),\diamond,\beta,\Ad_{\beta})\] is called a {\it representation} of the Hom-Lie group $(G,\diamond_G,e_\Phi,\Phi)$ on the vector space $V$ with respect to $\beta\in {\rm GL}(V)$.
\end{defi}

Let $\big(\g^{!},[\cdot,\cdot]_{!},\phi_{\g^{!}}\big)$ be the Hom-Lie algebra of a Hom-Lie group $(G,\diamond_G,e_\Phi,\Phi)$. From Lem\-ma~\ref{lem:ad-action}, $\widetilde{\Ad}$ gives an action of the Hom-Lie group $(G,\diamond_G,e_\Phi,\Phi)$ on $G$ with respect to the map $\Phi$. Now, let us denote $\widetilde{\Ad}_{a}:=\widetilde{\Ad}(a,\cdot)$ for any $a\in G$. Then we observe that for all $a\in G$, the map $\widetilde{\Ad}_{a}\colon G\rightarrow G$ is a weak isomorphism of Hom-Lie groups. Let us denote by $\big(\widetilde{\Ad}_{a}\big)_\triangleright\colon \g^{!}\rightarrow \g^{!}$, the weak isomorphism of Hom-Lie algebra $\big(\g^{!},[\cdot,\cdot]_{!},\phi_{\g^{!}}\big)$, obtained by Theorem~\ref{thm:corres-weak morphisms}. Subsequently, we have the following lemma.

 \begin{lem}\label{lem:Ad-rep}
The map $\widehat{\Ad}\colon G\rightarrow {\rm GL}\big(\g^{!}\big)$, defined by
\begin{equation*}
\widehat{\Ad}(a)=\big(\widetilde{\Ad}_{a}\big)_\triangleright,\qquad \forall\, a\in G
\end{equation*}
is a weak homomorphism from $(G,\diamond_G,e_\Phi,\Phi)$ to $\big({\rm GL}\big(\g^{!}\big),\diamond,\phi_{\g^{!}},\Ad_{\phi_{\g^{!}}}\big)$.
\end{lem}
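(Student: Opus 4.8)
The plan is to prove the statement by reducing it to the corresponding classical fact about the adjoint representation of the underlying Lie group $(G,\cdot,e_\Phi)$, exploiting the dictionary established earlier: by Remark~\ref{Rem:LIE-HLIE} a Hom-Lie group $(G,\diamond_G,e_\Phi,\Phi)$ has an associated Lie group $(G,\cdot,e_\Phi)$ with $a\cdot b=\Phi^{-1}(a\diamond_G b)$, by Proposition~\ref{thm:+} weak homomorphisms of Hom-Lie groups are precisely Lie group homomorphisms of the associated Lie groups, and by Lemma~\ref{lem:Ad-action} the map $\widetilde{\Ad}$ is a genuine Hom-Lie group action on $G$ with respect to $\Phi$. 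First I would verify directly from the definition of $\widetilde{\Ad}$ that for each $a\in G$ the map $\widetilde{\Ad}_a\colon G\to G$ is a weak isomorphism of Hom-Lie groups (this is essentially already noted in the text), so that $\big(\widetilde{\Ad}_a\big)_\triangleright\in {\rm GL}\big(\g^!\big)$ is well-defined via Theorem~\ref{thm:corres-weak morphisms}; its inverse is $\big(\widetilde{\Ad}_{a^{-1}}\big)_\triangleright$ up to the structure twist, which will be needed to see that $\widehat{\Ad}(a)$ is genuinely invertible.

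The key computation is then to identify $\widetilde{\Ad}_a$, as a map of the \emph{associated Lie group} $(G,\cdot,e_\Phi)$, with (a twist of) the usual conjugation. Writing out $\widetilde{\Ad}(a,b)=\Phi^{-1}(a\diamond_G b)\diamond_G a^{-1}=a\cdot b\cdot_{\Phi}a^{-1}$ and unwinding the relation between $\diamond_G$ and $\cdot$, I expect to find that $\widetilde{\Ad}_a$ corresponds to conjugation by $\Phi^{k}(a)$ for an appropriate small power $k$ in the Lie group $(G,\cdot)$; differentiating at $e_\Phi$ and inserting the correcting factors $\Psi^{-1}_{*e_\Psi}f_{*e_\Phi}\Phi_{*e_\Phi}$ from the definition of $f_\triangleright$ (here with $H=G$, $\Psi=\Phi$), $\big(\widetilde{\Ad}_a\big)_\triangleright$ becomes $\phi_{\g^!}^{m}\circ {\rm Ad}_a^{\rm Lie}\circ\phi_{\g^!}^{-m}$ for a suitable $m$, i.e.\ a conjugate of the classical adjoint map by a power of $\phi_{\g^!}$. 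Since classical conjugation $a\mapsto {\rm Ad}_a^{\rm Lie}$ is a Lie group homomorphism $G\to {\rm GL}\big(\g^!\big)$ and $\phi_{\g^!}$ is fixed, the composite $\widehat{\Ad}$ is smooth and multiplicative up to the $\phi_{\g^!}$-twists, which is exactly the statement that $\widehat{\Ad}$ is a Lie group homomorphism $(G,\cdot,e_\Phi)\to\big({\rm GL}\big(\g^!\big),\cdot,\Id\big)$ — and then Proposition~\ref{thm:+} (applied to $G$ and ${\rm GL}\big(\g^!\big)$ with their Hom-structures $\Phi$ and $\Ad_{\phi_{\g^!}}$) upgrades this to the desired assertion that $\widehat{\Ad}\colon (G,\diamond_G,e_\Phi,\Phi)\to\big({\rm GL}\big(\g^!\big),\diamond,\phi_{\g^!},\Ad_{\phi_{\g^!}}\big)$ is a weak homomorphism of Hom-Lie groups.

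Concretely the steps, in order, are: (1) record that $\widetilde{\Ad}_a$ is a weak automorphism of $(G,\diamond_G,e_\Phi,\Phi)$ and hence $\widehat{\Ad}(a):=\big(\widetilde{\Ad}_a\big)_\triangleright$ lands in ${\rm GL}\big(\g^!\big)$, with $\widehat{\Ad}(e_\Phi)=\phi_{\g^!}$ (the Hom-unit of the target) by evaluating at $a=e_\Phi$ using $\widetilde{\Ad}(e_\Phi,x)=\Phi(x)$; (2) translate $\widetilde{\Ad}_a$ into the language of $(G,\cdot,e_\Phi)$ via Remark~\ref{Rem:LIE-HLIE} and identify it with conjugation (twisted by $\Phi$) in that Lie group; (3) differentiate at $e_\Phi$ and feed in the $f_\triangleright$-correction from Theorem~\ref{thm:corres-weak morphisms} to express $\widehat{\Ad}(a)$ as a $\phi_{\g^!}$-conjugate of ${\rm Ad}_a^{\rm Lie}$; (4) check the cocycle-type identity $\widehat{\Ad}(a\cdot b)=\widehat{\Ad}(a)\cdot\widehat{\Ad}(b)$ (product on the right being composition in ${\rm GL}\big(\g^!\big)$) by combining multiplicativity of classical $\Ad$ with the fact that $\phi_{\g^!}$ preserves brackets; (5) invoke Proposition~\ref{thm:+} to conclude.

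The main obstacle I anticipate is purely bookkeeping: getting the exact powers of $\Phi$ (and correspondingly of $\phi_{\g^!}$) right in step (2)–(3). The definition $\widetilde{\Ad}(a,b)=\Phi^{-1}(a\diamond_G b)\diamond_G a^{-1}$ has an asymmetric pattern of $\Phi$'s, and the map $f_\triangleright$ itself carries a $\Psi^{-1}_{*}(\,\cdot\,)\Phi_{*}$ sandwich, so after composing $\widetilde{\Ad}_a$ with differentiation and with these correcting factors one must track carefully that all the stray $\phi_{\g^!}$-powers either cancel or assemble into the single conjugation $\phi_{\g^!}^{m}(-)\phi_{\g^!}^{-m}$; an off-by-one error here would break the multiplicativity check in step (4). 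Once the powers are pinned down, everything else is a direct consequence of the already-established correspondence between Hom-Lie and Lie structures and the classical theory of the adjoint representation, so no genuinely new difficulty arises beyond this computation.
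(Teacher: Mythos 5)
Your route is sound and genuinely different from the paper's. The paper proves the lemma by brute force: it writes $\widehat{\Ad}(a\diamond_G b)(x)$ and $\widehat{\Ad}(\Phi(a))\circ\Phi^{-1}_{*}\circ\widehat{\Ad}(\Phi(b))(x)$ as $t$-derivatives of curves built from $\Phi(\exp(tx))$, and matches them after a chain of Hom-associativity manipulations, never leaving the Hom-world. You instead push everything through the associated Lie group $(G,\cdot,e_\Phi)$ of Remark~\ref{Rem:LIE-HLIE}: since $a\diamond_G b=\Phi(a)\cdot\Phi(b)$ and the Hom-inverse coincides with the Lie-group inverse, one finds $\widetilde{\Ad}_a=C_{\Phi(a)}\circ\Phi$ (conjugation by $\Phi(a)$ composed with $\Phi$), hence, using $\Ad^{\rm Lie}_{\Phi(a)}=\phi_{\g^{!}}\circ\Ad^{\rm Lie}_{a}\circ\phi_{\g^{!}}^{-1}$,
\begin{equation*}
\widehat{\Ad}(a)=\big(\widetilde{\Ad}_{a}\big)_\triangleright=\phi_{\g^{!}}^{-1}\circ\big(\Ad^{\rm Lie}_{\Phi(a)}\circ\phi_{\g^{!}}\big)\circ\phi_{\g^{!}}=\Ad^{\rm Lie}_{a}\circ\phi_{\g^{!}},
\end{equation*}
and multiplicativity of the classical adjoint plus Proposition~\ref{thm:+} finishes the argument. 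This buys a closed formula for $\widehat{\Ad}$ and makes the smoothness and invertibility of $\widehat{\Ad}(a)$ transparent, at the cost of relying on the (unproved in the paper, but easy) fact that each $\widetilde{\Ad}_a$ is a weak automorphism. Two bookkeeping points in your sketch need correcting, exactly where you predicted trouble: $\big(\widetilde{\Ad}_a\big)_\triangleright$ is \emph{not} a conjugate $\phi_{\g^{!}}^{m}\circ\Ad^{\rm Lie}_a\circ\phi_{\g^{!}}^{-m}$ but the product $\Ad^{\rm Lie}_a\circ\phi_{\g^{!}}$ displayed above; and in step (4) the identity to check is not $\widehat{\Ad}(a\cdot b)=\widehat{\Ad}(a)\circ\widehat{\Ad}(b)$ with plain composition (which is false), but multiplicativity for the associated Lie group product of the target, $A\cdot_H B=A\circ\phi_{\g^{!}}^{-1}\circ B$; with that product both sides equal $\Ad^{\rm Lie}_a\circ\Ad^{\rm Lie}_b\circ\phi_{\g^{!}}$, and Proposition~\ref{thm:+} then yields the weak homomorphism claim.
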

\begin{proof} For all $a, b\in G$ and $x\in\g^{!}$, it follows that
\begin{align*}
\widehat{\Ad}(a\diamond_G b)(x)&= \big(\widetilde{\Ad}_{a\diamond_G b}\big)_\triangleright(x)\\
&= \frac{{\rm d}}{{\rm d}t}\Big|_{t=0}\Phi^{-1}\big(\Phi^{-1}((a\diamond_G b)\diamond_G\Phi(\exp(tx)))\diamond_G (a\diamond_G b)^{-1}\big)
\end{align*}
and
\begin{gather*}
 \widehat{\Ad}(\Phi(a))\circ\Phi^{-1}_{*}\circ\widehat{\Ad}(\Phi(b))(x)\\
\qquad {} = \widehat{\Ad}(\Phi(a))\Phi^{-1}_{*}\frac{{\rm d}}{{\rm d}t}\Big|_{t=0}\Phi^{-1}\big(\big(\Phi^{-1}(\Phi(b)\diamond_G\Phi(\exp(tx)))\diamond_G (\Phi(b))^{-1}\big)\big)\\
\qquad {} = \frac{{\rm d}}{{\rm d}t}\Big|_{t=0}\Phi^{-1}\big(\big(\Phi^{-1}\big(\Phi(a)\diamond_G\big(b\diamond_G\big(\Phi^{-1}\big(\exp(tx)\diamond_G b^{-1}\big)\big)\big)\diamond_G (\Phi(a))^{-1}\big)\big)\big)\\
\qquad {} = \frac{{\rm d}}{{\rm d}t}\Big|_{t=0}\big(\Phi^{-1}(a)\diamond_G\big(\Phi^{-2}(b)\diamond_G\big(\Phi^{-3}\big(\exp(tx)\diamond_G b^{-1}\big)\big)\big)\diamond_G a^{-1}\big)\\
\qquad {} = \frac{{\rm d}}{{\rm d}t}\Big|_{t=0} \big(\Phi^{-2}(a\diamond_G b)\diamond_G\big(\Phi^{-2}\big(\exp(tx)\diamond_G b^{-1}\big)\big)\big)\diamond_G a^{-1} \\
\qquad {} = \frac{{\rm d}}{{\rm d}t}\Big|_{t=0}\big(\Phi^{-2}(a\diamond_G b)\diamond_G \Phi^{-1}(\exp(tx))\big)\diamond_G \Phi^{-1}(a\diamond_G b)^{-1},
\end{gather*}
i.e.,
\begin{equation*}
\widehat{\Ad}(a\diamond_G b)=\widehat{\Ad}(\Phi(a))\circ\Phi^{-1}_{*}\circ\widehat{\Ad}(\Phi(b)).
\end{equation*}
Thus, we have
\begin{equation*}
\Phi_{*}\circ\widehat{\Ad}(a\diamond_G b)\circ\Phi^{-1}_{*}=\Phi_{*}\circ\widehat{\Ad}(\Phi(a))\circ\Phi^{-1}_{*}\circ\widehat{\Ad}(\Phi(b))\circ\Phi^{-1}_{*},
\end{equation*}
which implies that $\widehat{\Ad}$ is a Hom-Lie group (weak) homomorphism from $G$ to ${\rm GL}\big(\g^{!}\big)$.
\end{proof}

From Lemma \ref{lem:Ad-rep}, every Hom-Lie group $(G,\diamond_G,e_\Phi,\Phi)$ has a representation on its Hom-Lie algebra $\big(\g^{!},[\cdot,\cdot]_{\g^{!}},\phi_{\g^{!}}\big)$ with respect to the map $\phi_{\g^{!}}$, which we call the ``adjoint representation''.

Let $(G,\theta,M,\iota)$ be an action of the Hom-Lie group $(G,\diamond_G,e_\Phi,\Phi)$. By Theorem~\ref{thm:imp1}, for any $x\in\g^{!}$, we have a unique map $\sigma^{!}_{x}\colon \mathbb{R}\rightarrow G$. Now, we consider the curve $\gamma\colon \mathbb{R}\rightarrow M$ given by
\[ \gamma(t)=\Phi(\exp(tx))\odot m,\qquad \forall\, t\in \mathbb{R}, x\in \g^{!}, m\in M.\]
Then we get a section of the pullback bundle $\iota^{*}TM$ as follows:
\begin{equation*}
\widetilde{x}(m)=\frac{{\rm d}}{{\rm d}t}\bigg|_{t=0}\gamma(t),\qquad \forall\, m\in M.
\end{equation*}
 Thus, we have a map $\kappa\colon \g^{!}\rightarrow \Gamma(\iota^{*}TM)$ given by $\kappa(x)=\widetilde{x}$ for all $x\in \g^{!}$. We denote $\kappa(x)(m)$ simply by $x\bullet m$ for any $x\in \g^{!}$, $m\in M$.

\begin{defi}
The map $\kappa\colon \g^{!}\rightarrow \Gamma(\iota^{*}TM)$ is called the infinitesimal action of the Hom-Lie algebra $\g^{!}$ on the smooth manifold $M$ with respect to the map $\iota\in \Diff(M)$.
\end{defi}
By Lemma \ref{lem:Ad-rep} and Theorem~\ref{thm:corres-weak morphisms}, the map
\[ (\widehat{\Ad})_{\triangleright}\colon \ \big(\g^{!},[\cdot,\cdot]_{\g^{!}},\phi_{\g^{!}}\big)\rightarrow \big(\gl\big(\g^{!}\big),[\cdot,\cdot]_{\phi_{\g^{!}}}, \Ad_{\phi_{\g^{!}}}\big)\]
is a weak homomorphism of Hom-Lie algebras. To simplify the notations, we denote $\widehat{\ad}:=\big(\widehat{\Ad}\big)_{\triangleright}$, then we have the following result.
\begin{thm}
With the above notations, we have
\[\big(\widehat{\ad}(x)\big)\bullet y=[x, y]_{\g^{!}},\qquad \forall\, x,y\in\g^{!}.\]
\end{thm}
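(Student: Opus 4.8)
The plan is to translate the statement into the associated Lie group $(G,\cdot,e_\Phi)$ of Remark~\ref{Rem:LIE-HLIE} and to reduce it to the classical fact that the differential of the adjoint representation of a Lie group is the $\mathrm{ad}$-map. The first step is to rewrite $\widetilde{\Ad}_a$ in Lie-group terms: since $a\diamond_G b=\Phi(a\cdot b)$ and the Hom-inverse of $a$ coincides with its $\cdot$-inverse, a short computation gives $\widetilde{\Ad}_a=\Phi\circ C_a$, where $C_a(b)=a\cdot b\cdot a^{-1}$ is conjugation in $(G,\cdot)$. Feeding this into the defining formula of $(\cdot)_\triangleright$ from Theorem~\ref{thm:corres-weak morphisms}, and using $C_a(e_\Phi)=e_\Phi$ together with $\Phi_{*e_\Phi}=\phi_{\g^!}$, the two differentials of $\Phi$ cancel and one obtains the key identity
\[
\widehat{\Ad}(a)=\big(\widetilde{\Ad}_a\big)_\triangleright=\mathrm{Ad}_a\circ\phi_{\g^!},\qquad\forall\,a\in G,
\]
where $\mathrm{Ad}_a$ is the usual adjoint action of $a$ in the Lie group $(G,\cdot)$ on its Lie algebra $\g=\g^!$ (this also drops out of the computation in the proof of Lemma~\ref{lem:Ad-rep}).

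Next I would compute the infinitesimal action directly. The action of $G$ on the manifold $\g^!$ attached to the representation $\widehat{\Ad}$ is $\theta(a,z)=\widehat{\Ad}(a)(z)$ with respect to $\iota=\phi_{\g^!}$, and since $\Phi$ is a Lie-group automorphism of $(G,\cdot)$ with differential $\phi_{\g^!}$ at $e_\Phi$, we have $\Phi(\exp(tx))=\exp(t\phi_{\g^!}(x))$. Hence, writing $[\cdot,\cdot]_{\rm Lie}$ for the Lie bracket of $(G,\cdot)$,
\[
x\bullet y=\frac{{\rm d}}{{\rm d}t}\Big|_{t=0}\widehat{\Ad}\big(\Phi(\exp(tx))\big)(y)
=\frac{{\rm d}}{{\rm d}t}\Big|_{t=0}\mathrm{Ad}_{\exp(t\phi_{\g^!}(x))}\big(\phi_{\g^!}(y)\big)
=\big[\phi_{\g^!}(x),\phi_{\g^!}(y)\big]_{\rm Lie},
\]
using $\frac{{\rm d}}{{\rm d}t}\big|_{0}\mathrm{Ad}_{\exp(tu)}=\mathrm{ad}_u$. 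By Lemma~\ref{RemarkS6} and Lemma~\ref{lem:Des1} the brackets are related by $[x,y]_{\g^!}=\phi_{\g^!}\big([x,y]_{\rm Lie}\big)$; since $\phi_{\g^!}$ also preserves $[\cdot,\cdot]_{\g^!}$, the right-hand side equals $\phi_{\g^!}^{-1}\big([\phi_{\g^!}(x),\phi_{\g^!}(y)]_{\g^!}\big)=\phi_{\g^!}^{-1}\phi_{\g^!}\big([x,y]_{\g^!}\big)=[x,y]_{\g^!}$.

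To match the notation $\widehat{\ad}(x)\bullet y$ exactly, I would observe that the $G$-action on $\g^!$ above is the composite of $\widehat{\Ad}\colon G\to\mathrm{GL}(\g^!)$ with the natural action $A\mapsto(z\mapsto A(z))$ of the Hom-Lie group $\big(\mathrm{GL}(\g^!),\diamond,\phi_{\g^!},\Ad_{\phi_{\g^!}}\big)$ on $\g^!$ with respect to $\phi_{\g^!}$ (a direct check, as in Proposition~\ref{HGrp:GL(V)}). Applying Theorem~\ref{thm:99} to the weak homomorphism $\widehat{\Ad}$ shows that the curve $t\mapsto\widehat{\Ad}(\Phi(\exp(tx)))$ is the one-parameter Hom-Lie subgroup $\sigma^{!}_{\widehat{\ad}(x)}$ of $\mathrm{GL}(\g^!)$, since $(\widehat{\Ad})_\triangleright=\widehat{\ad}$; substituting this into the definition of the infinitesimal action of $\mathrm{GL}(\g^!)$ on $\g^!$ gives $\widehat{\ad}(x)\bullet y=x\bullet y$, which combined with the previous step yields $\widehat{\ad}(x)\bullet y=[x,y]_{\g^!}$.

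The only genuine difficulty is the bookkeeping of three different twists by structure maps — the one hidden inside $(\cdot)_\triangleright$, the extra $\Phi$-factor built into the definition of the infinitesimal action $\kappa$, and the twist relating $[\cdot,\cdot]_{\rm Lie}$ to $[\cdot,\cdot]_{\g^!}$ — all of which must cancel. Once one commits to computing inside the associated Lie group $(G,\cdot)$, the rest is a routine differentiation.
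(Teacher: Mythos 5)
Your proof is correct and follows essentially the same route as the paper's: both translate the computation into the associated Lie group $(G,\cdot,e_\Phi)$, identify $\widetilde{\Ad}_a$ with $\Phi\circ C_a$ (conjugation followed by $\Phi$), and evaluate the resulting derivative as $\Phi_{*e_\Phi}[x,y]_{\rm Lie}=[x,y]_{\g^{!}}$ via Lemma~\ref{lem:Des1}. The only difference is organizational: you isolate the intermediate identity $\widehat{\Ad}(a)=\mathrm{Ad}_a\circ\phi_{\g^{!}}$ and spell out how $\widehat{\ad}(x)\bullet y$ unwinds to $\frac{{\rm d}}{{\rm d}t}\big|_{t=0}\widehat{\Ad}(\Phi(\exp(tx)))(y)$, steps the paper's proof performs implicitly inside a single double-derivative computation.
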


\begin{proof} By the definition of the infinitesimal action of Hom-Lie algebra $\g^{!}$, we have
\begin{align*}
\big(\widehat{\ad}(x)\big)\bullet y&= \frac{{\rm d}}{{\rm d}t}\Big|_{t=0}\big(\Phi(\exp(tx))\odot y\big)\\
&= \frac{d}{dt}\Big|_{t=0}\frac{{\rm d}}{{\rm d}s}\Big|_{s=0}\Phi^{-1}\big(\Phi^{-1}\big(\Phi(\exp(tx))\diamond_G\Phi(\exp(sy))\big)\diamond_G(\Phi(\exp(tx)))^{-1}\big)\\
&= \frac{{\rm d}}{{\rm d}t}\Big|_{t=0}\frac{{\rm d}}{{\rm d}s}\Big|_{s=0}\Phi(\exp(tx))\cdot\Phi(\exp(sy))\cdot(\Phi(\exp(tx)))^{-1}
 = \Phi_{*e_\Phi}[x,y],
\end{align*}
which implies that $(\widehat{\ad}(x))\bullet y=[x, y]_{\g^{!}}$.
\end{proof}

\begin{pro}
Let $(G,\diamond_G,e_\Phi,\Phi)$ be a Hom-Lie group and $\g^{!}$ be the Hom-Lie algebra of $G$. Then, we have
\begin{equation*}
\widehat{\Ad}(\Hexp(x))=\Hexp\big(\widehat{\ad}(x)\big),\qquad \forall\, x\in\g^{!}.
\end{equation*}
\end{pro}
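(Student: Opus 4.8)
The plan is to read this identity off from the universality of the $\Hexp$ map, Theorem~\ref{thm:after}, applied to the weak homomorphism $\widehat{\Ad}$. Concretely, by Lemma~\ref{lem:Ad-rep} the map
$\widehat{\Ad}\colon (G,\diamond_G,e_\Phi,\Phi)\to \big({\rm GL}\big(\g^{!}\big),\diamond,\phi_{\g^{!}},\Ad_{\phi_{\g^{!}}}\big)$
is a weak homomorphism of Hom-Lie groups, where the product $\diamond$ on ${\rm GL}(\g^{!})$ is the one given by \eqref{eq:mu1} with $\beta=\phi_{\g^{!}}$. Hence, by Theorem~\ref{thm:corres-weak morphisms}, it induces a weak homomorphism of Hom-Lie algebras $\big(\widehat{\Ad}\big)_{\triangleright}\colon \big(\g^{!},[\cdot,\cdot]_{\g^{!}},\phi_{\g^{!}}\big)\to\big(\gl\big(\g^{!}\big),[\cdot,\cdot]_{\phi_{\g^{!}}},\Ad_{\phi_{\g^{!}}}\big)$, and this induced map is by definition exactly $\widehat{\ad}$.

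Now I would apply Theorem~\ref{thm:after} with $f=\widehat{\Ad}$, taking $(G,\diamond_G,e_\Phi,\Phi)$ as the source Hom-Lie group and $\big({\rm GL}\big(\g^{!}\big),\diamond,\phi_{\g^{!}},\Ad_{\phi_{\g^{!}}}\big)$ as the target. The theorem yields
$\widehat{\Ad}\big(\Hexp(x)\big)=\Hexp\big(\big(\widehat{\Ad}\big)_{\triangleright}(x)\big)=\Hexp\big(\widehat{\ad}(x)\big)$
for all $x\in\g^{!}$, where on the right-hand side $\Hexp$ denotes the Hom-exponential map of $\big({\rm GL}\big(\g^{!}\big),\diamond,\phi_{\g^{!}},\Ad_{\phi_{\g^{!}}}\big)$ defined via \eqref{eq:35}. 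This is precisely the asserted equality, so the proof is essentially immediate once the statement is recognized as an instance of the universality property.

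Should one prefer not to invoke Theorem~\ref{thm:after} as a black box, the same conclusion follows by expanding both sides through Theorem~\ref{thm:imp1} and Remark~\ref{Rem:LIE-HLIE}: writing $\Hexp(x)=\Phi(\exp(x))$ for the exponential of the associated Lie group $(G,\cdot,e_\Phi)$ and using Proposition~\ref{thm:+} to view $\widehat{\Ad}$ as an ordinary Lie group homomorphism, one computes $\widehat{\Ad}(\Phi(\exp(x)))$ in terms of the Lie-group exponential of ${\rm GL}(\g^{!})$ and compares it with $\Hexp(\widehat{\ad}(x))$ written the same way; the two coincide once $\widehat{\ad}=\big(\widehat{\Ad}\big)_{\triangleright}$ is matched against the differential $\widehat{\Ad}_{*e_\Phi}$. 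The only point requiring attention is the bookkeeping of the twists $\phi_{\g^{!}}$ and of the identification $T_{\phi_{\g^{!}}}{\rm GL}(\g^{!})\cong\gl(\g^{!})$, but since this is exactly what is already packaged into Theorem~\ref{thm:after}, there is no genuine obstacle and the first route is the cleanest.
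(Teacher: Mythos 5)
Your proposal is correct and follows exactly the paper's own route: the paper's proof is the one-line observation that the identity follows from Theorem~\ref{thm:after} (universality of $\Hexp$) applied to the weak homomorphism $\widehat{\Ad}$ of Lemma~\ref{lem:Ad-rep}, with $\widehat{\ad}=\big(\widehat{\Ad}\big)_{\triangleright}$ by definition. Your additional remarks on the direct computation are a harmless elaboration of the same argument.
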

\begin{proof} The proof of the proposition follows from Theorem~\ref{thm:after} and Lemma~\ref{lem:Ad-rep}.
\end{proof}

\section[\protect{Integration of the Hom-Lie algebra (gl(V),[.,.]beta,Adbeta)}]{Integration of the Hom-Lie algebra $\boldsymbol{(\gl(V),[\cdot,\cdot]_\beta,\Ad_{\beta})}$}\label{Integrationgl}

Let $V$ be a vector space and $\beta\in {\rm GL}(V)$. Let us define a map $\frke_\beta\colon \gl(V)\rightarrow {\rm GL}(V)$ by
\begin{equation*}
 \frke_\beta^A=\beta+\beta\circ A\circ \beta^{-1} +\frac{1}{2!}\beta\circ A\circ \beta^{-1} \circ A\circ \beta^{-1} +\cdots,\qquad \forall\, A\in\gl(V).
\end{equation*}
Let us denote by $e\colon \gl(V)\rightarrow {\rm GL}(V)$, the usual exponential map. Then the map $\frke_\beta\colon \gl(V)\rightarrow {\rm GL}(V)$ can be written as follows:
 \begin{equation}\label{Hexp-exp}
 \frke_\beta^A=\beta\circ e^{A\circ \beta^{-1}} ,\qquad \forall\, A\in\gl(V).
 \end{equation}

The map $\eb\colon \gl(V)\rightarrow {\rm GL}(V)$ gives rise to a one-parameter Hom-Lie subgroup of the Hom-Lie group $({\rm GL}(V),\diamond,\Ad_{\beta})$. More precisely, for any $A\in\gl(V)$, let us define a map
$\sigma_A\colon \mathbb R\lon {\rm GL}(V)$ by
\begin{equation*}
 \sigma_A(t)=\eb^{tA},\qquad \forall\, t\in\mathbb R.
\end{equation*}
Then we have the following lemma.
\begin{lem}
 With the above notations, $\sigma_A\colon(\mathbb R,+,0,\Id) \rightarrow ({\rm GL}(V),\diamond,\beta,\Ad_{\beta})$ is a weak homomorphism of Hom-Lie groups, i.e.,
\begin{equation*}
 \sigma_A(t+s)=\big(\Ad_{\beta^{-1}}\sigma_A(t)\big)\diamond \big(\Ad_{\beta^{-1}}\sigma_A(s)\big),\qquad \forall\, t,s\in\mathbb R.
 \end{equation*}
\end{lem}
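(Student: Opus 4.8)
The plan is to reduce the asserted identity to the classical one-parameter property of the ordinary matrix exponential by means of the closed formula \eqref{Hexp-exp}. First I would record the elementary facts: by \eqref{Hexp-exp}, $\sigma_A(t)=\eb^{tA}=\beta\circ e^{tA\circ\beta^{-1}}$, which lies in ${\rm GL}(V)$ since $\beta\in{\rm GL}(V)$ and every matrix exponential is invertible, and which depends smoothly on $t$; moreover $\sigma_A(0)=\beta\circ e^{0}=\beta$, the Hom-unit of $({\rm GL}(V),\diamond,\beta,\Ad_{\beta})$. Hence $\sigma_A$ is a well-defined smooth map sending $0$ to the Hom-unit, and only the multiplicativity relation remains to be checked.

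Next I would compute the right-hand side directly. Since $\Ad_{\beta^{-1}}(C)=\beta^{-1}\circ C\circ\beta$, we get $\Ad_{\beta^{-1}}\sigma_A(t)=\beta^{-1}\circ\beta\circ e^{tA\circ\beta^{-1}}\circ\beta=e^{tA\circ\beta^{-1}}\circ\beta$. Substituting this into the product \eqref{eq:mu1}, the extra copies of $\beta$ and $\beta^{-1}$ contributed by $\diamond$ cancel against those produced by $\Ad_{\beta^{-1}}$, and one is left with
\[
\big(\Ad_{\beta^{-1}}\sigma_A(t)\big)\diamond\big(\Ad_{\beta^{-1}}\sigma_A(s)\big)=\beta\circ e^{tA\circ\beta^{-1}}\circ e^{sA\circ\beta^{-1}}.
\]
Applying the one-parameter-group law $e^{tX}\circ e^{sX}=e^{(t+s)X}$ with $X=A\circ\beta^{-1}$ turns the right side into $\beta\circ e^{(t+s)A\circ\beta^{-1}}=\eb^{(t+s)A}=\sigma_A(t+s)$, which is exactly the claimed equation.

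Finally I would note that this identity, together with $\sigma_A(0)=\beta$, is precisely the weak-homomorphism condition for Hom-Lie groups from $(\mathbb{R},+,0,\Id)$ to $({\rm GL}(V),\diamond,\beta,\Ad_{\beta})$: that condition is $\Ad_{\beta}(\sigma_A(t+s))=\sigma_A(t)\diamond\sigma_A(s)$, and applying $\Ad_{\beta^{-1}}$ to both sides — which distributes over $\diamond$ by axiom (i) of a Hom-group, as $\Ad_{\beta^{-1}}$ is the inverse of the structure map $\Ad_{\beta}$ — yields the displayed form. Alternatively, by Proposition~\ref{thm:+} the claim is equivalent to $\sigma_A$ being an ordinary Lie group homomorphism $(\mathbb{R},+,0)\to({\rm GL}(V),\cdot,\beta)$ into the associated Lie group of Remark~\ref{Rem:LIE-HLIE}, whose product is $A\cdot B=\Ad_{\beta^{-1}}(A\diamond B)=A\circ\beta^{-1}\circ B$, and the same exponential computation settles it. There is essentially no obstacle here; the only point requiring care is the bookkeeping of the $\beta$-conjugations, i.e., making sure the $\Ad_{\beta^{-1}}$'s are inserted on the correct sides so that everything telescopes down to the product of two plain exponentials.
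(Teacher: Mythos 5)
Your proposal is correct and follows essentially the same route as the paper: both reduce the identity to the one-parameter group law $e^{tX}e^{sX}=e^{(t+s)X}$ with $X=A\circ\beta^{-1}$ via the formula $\mathfrak e_\beta^{A}=\beta\circ e^{A\circ\beta^{-1}}$, the only difference being that the paper expands $\sigma_A(t+s)$ from the left while you simplify the $\diamond$-product from the right. Your extra remarks (that $\sigma_A(0)=\beta$ and that the displayed identity is the weak-homomorphism condition after applying $\Ad_{\beta^{-1}}$) are accurate and harmless additions.
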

\begin{proof} By \eqref{Hexp-exp}, we have
\begin{align*}
 \sigma_A(t+s)&= \eb ^{(t+s)A}=\beta\circ e^{(t+s)(A\beta^{-1})} =\beta\circ\big(e^{tA\circ \beta^{-1}}e^{sA\circ\beta^{-1}}\big)
 = \eb ^{tA}\circ\beta^{-1}\circ\eb ^{sA}\\
 &= \Ad_{\beta^{-1}}\eb ^{sA}\diamond \Ad_{\beta^{-1}}\eb ^{tA}
 = \big(\Ad_{\beta^{-1}}\sigma_A(t)\big)\diamond \big(\Ad_{\beta^{-1}}\sigma_A(s)\big),
\end{align*}
for all $t,s\in \mathbb{R}$.\end{proof}

Thus, for any $A\in \gl(V)$, the map $\sigma_A\colon (\mathbb R,+,0,\Id) \rightarrow ({\rm GL}(V),\diamond,\beta,\Ad_{\beta})$ is a one-parameter Hom-Lie subgroup of the Hom-Lie group $({\rm GL}(V),\diamond,\beta,\Ad_{\beta})$.

\begin{pro}
Let us consider the Hom-Lie algebra $(\gl(V),[\cdot,\cdot]_{\beta},\Ad_{\beta})$. Then the Hom-Lie bracket $[\cdot,\cdot]_{\beta}$ can be expressed as follows:
\begin{gather}\label{eq:exp1}
[A,B]_\beta=\frac{{\rm d}}{{\rm d}t}\frac{{\rm d}}{{\rm d}s}\Big|_{t=0,s=0}\big(\big(\Ad_{\beta^{-3}}\big(\eb^{sA}\diamond \eb^{tB}\big)\big) \diamond \Ad_{\beta^{-2}}\big(\eb^{-sA}\big)\big) \diamond \Ad_{\beta^{-1}}\big(\eb^{-tB}\big).
\end{gather}
\end{pro}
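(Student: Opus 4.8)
The plan is to obtain \eqref{eq:exp1} as the special case of Theorem~\ref{HLIEprod-HLIEbrac} attached to the Hom-Lie group $({\rm GL}(V),\diamond,\beta,\Ad_\beta)$. First I would record the three translations needed to pass from the general formula of Theorem~\ref{HLIEprod-HLIEbrac} to the statement. (a) By Proposition~\ref{HGrp:GL(V)} the tuple $({\rm GL}(V),\diamond,\beta,\Ad_\beta)$ is a Hom-Lie group, and by the example following Theorem~\ref{HLIEprod-HLIEbrac} its associated Hom-Lie algebra $\g^{!}$ is $\gl(V)$, with structure map $\phi_{\g^{!}}=\Ad_\beta$ and bracket $[\cdot,\cdot]_{\g^{!}}=[\cdot,\cdot]_\beta$; thus the left-hand side of \eqref{eq:exp1} is precisely $[A,B]_{\g^{!}}$. (b) Since $\Ad_\beta\circ\Ad_\beta=\Ad_{\beta^{2}}$, iteration yields $\Phi^{-k}=\Ad_{\beta^{-k}}$ for every $k\ge 1$, which turns the operators $\Phi^{-3},\Phi^{-2},\Phi^{-1}$ occurring in Theorem~\ref{HLIEprod-HLIEbrac} into $\Ad_{\beta^{-3}},\Ad_{\beta^{-2}},\Ad_{\beta^{-1}}$. (c) The $\Hexp$ map of this Hom-Lie group is $\eb$, i.e.\ $\Hexp(A)=\eb^{A}$ for all $A\in\g^{!}=\gl(V)$; this is the one point that needs an argument.

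For (c), the lemma immediately preceding this proposition shows that $t\mapsto\sigma_A(t)=\eb^{tA}$ is a one-parameter Hom-Lie subgroup of $({\rm GL}(V),\diamond,\beta,\Ad_\beta)$, so by Theorem~\ref{thm:imp1} it equals $\sigma^{!}_{x}$ for a unique $x\in\g^{!}$, and then $\Hexp(x)=\sigma^{!}_{x}(1)=\sigma_A(1)=\eb^{A}$. It remains to verify that $x=A$ under the identification $\g^{!}=\gl(V)$. I would do this via Remark~\ref{Rem:LIE-HLIE}: the underlying Lie group $({\rm GL}(V),\cdot,\beta)$ has product $C\cdot D=C\circ\beta^{-1}\circ D$ and Hom-unit $\beta$, hence exponential map $C\mapsto\beta\circ e^{\beta^{-1}\circ C}$ (here $e$ is the ordinary matrix exponential), so that $\Phi(\exp(tA))=\Ad_\beta\big(\beta\circ e^{t\beta^{-1}\circ A}\big)=\beta\circ e^{tA\circ\beta^{-1}}=\eb^{tA}=\sigma_A(t)$; by the uniqueness in Theorem~\ref{thm:imp1} this forces $x=A$. (Alternatively one can differentiate $\sigma_A$ at $t=0$ and match the result against the formulas for $\phi_{\g^{!}}$ and $[\cdot,\cdot]_{\g^{!}}$ provided by Lemma~\ref{lem:Des1}.)

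With (a)--(c) in hand, substituting $\Phi=\Ad_\beta$, $\Phi^{-k}=\Ad_{\beta^{-k}}$, and $\Hexp(\pm sA)=\eb^{\pm sA}$, $\Hexp(\pm tB)=\eb^{\pm tB}$ into the formula of Theorem~\ref{HLIEprod-HLIEbrac} reproduces \eqref{eq:exp1} verbatim. If a self-contained verification is preferred, one can instead expand $\eb^{sA}=\beta\circ e^{sA\circ\beta^{-1}}$ and $\eb^{tB}=\beta\circ e^{tB\circ\beta^{-1}}$ to second order, carry out the three $\diamond$-products and the maps $\Ad_{\beta^{-k}}$ explicitly, and extract the coefficient of $st$, which collapses to $\beta\circ A\circ\beta^{-1}\circ B\circ\beta^{-1}-\beta\circ B\circ\beta^{-1}\circ A\circ\beta^{-1}=[A,B]_\beta$. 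I expect the only genuine obstacle to be step (c): correctly tracking $A$ through the chain of identifications ($T_{e_\Phi}{\rm GL}(V)$ viewed inside $\gl(V)$, left-invariant sections of $\Phi^{!}T{\rm GL}(V)$, and the twisted exponential of $({\rm GL}(V),\cdot)$), so that the abstract one-parameter subgroup produced by Theorem~\ref{thm:imp1} gets matched with $\eb^{tA}$. After that, the proposition is a routine substitution (or a routine Taylor expansion).
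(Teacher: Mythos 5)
Your argument is correct, but it takes a different route from the paper. The paper proves the proposition by a direct computation: it substitutes $\eb^{sA}=\beta\circ e^{sA\circ\beta^{-1}}$ into the right-hand side of \eqref{eq:exp1}, carries out the $\diamond$-products and the conjugations $\Ad_{\beta^{-k}}$ explicitly, observes that all the powers of $\beta$ cancel so that the expression collapses to $\beta\circ e^{sA\beta^{-1}}e^{tB\beta^{-1}}e^{-sA\beta^{-1}}e^{-tB\beta^{-1}}$, and then differentiates to get $\beta A\beta^{-1}B\beta^{-1}-\beta B\beta^{-1}A\beta^{-1}=[A,B]_\beta$ --- i.e.\ exactly the ``self-contained verification'' you offer as a fallback. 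Your primary route instead specializes Theorem~\ref{HLIEprod-HLIEbrac}, and the only nontrivial input is your step (c), the identification $\Hexp=\eb$ on $\g^{!}=\gl(V)$; your verification of it is sound: the twisted Lie group $({\rm GL}(V),\cdot,\beta)$ with $C\cdot D=C\circ\beta^{-1}\circ D$ is isomorphic to the ordinary ${\rm GL}(V)$ via $C\mapsto\beta^{-1}\circ C$, whence $\exp(tA)=\beta\circ e^{t\beta^{-1}A}$ and $\Phi(\exp(tA))=\Ad_\beta\big(\beta e^{t\beta^{-1}A}\big)=\beta e^{tA\beta^{-1}}=\eb^{tA}$, and the uniqueness clause of Theorem~\ref{thm:imp1} pins down the parameter as $A$ itself. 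What your approach buys is conceptual economy: it exhibits the proposition as literally the instance of the general bracket formula for the Hom-Lie group $({\rm GL}(V),\diamond,\beta,\Ad_\beta)$, a dependence the paper leaves implicit by redoing the computation; the cost is that you must carefully track the chain of identifications $T_\beta{\rm GL}(V)\cong\gl(V)$, $\Phi^{-k}=\Ad_{\beta^{-k}}$, and $\Hexp=\eb$, which the paper's hands-on calculation avoids entirely. Both proofs are complete and correct.
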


\begin{proof} By \eqref{Hexp-exp}, it follows that
\begin{gather*}
 \frac{{\rm d}}{{\rm d}t}\frac{{\rm d}}{{\rm d}s}\Big|_{t=0,s=0}\big(\big(\Ad_{\beta^{-3}}\big(\eb^{sA}\diamond \eb^{tB}\big)\big) \diamond \Ad_{\beta^{-2}}\big(\eb^{-sA}\big)\big) \diamond \Ad_{\beta^{-1}}\big(\eb^{-tB}\big)\\
\qquad {}= \frac{{\rm d}}{{\rm d}t}\frac{{\rm d}}{{\rm d}s}\Big|_{t=0,s=0}\big(\beta^{-2}\big(\beta^{3}e^{sA\beta^{-1}}e^{tB\beta^{-1}}e^{-sA\beta^{-1}} e^{-tB\beta^{-1}}\beta^{-2}\big)\beta^{2}\big)\\
\qquad {}=\frac{{\rm d}}{{\rm d}t}\frac{{\rm d}}{{\rm d}s}\Big|_{t=0,s=0}\big(\beta e^{sA\beta^{-1}}e^{tB\beta^{-1}}e^{-sA\beta^{-1}}e^{-tB\beta^{-1}}\big)\\
\qquad {}=\beta A\beta^{-1}B\beta^{-1}-\beta B\beta^{-1}A\beta^{-1}
=[A,B]_\beta,
\end{gather*}
which implies that (\ref{eq:exp1}) hold.
\end{proof}

\section{Integration of the derivation Hom-Lie algebra}

Let the triple $(\g,[\cdot,\cdot]_{\g},\phi_{\g})$ be a Hom-Lie algebra. In this section, we consider the derivation of $(\g,[\cdot,\cdot]_{\g},\phi_{\g})$ and show that it is the Hom-Lie algebra of the Hom-Lie group of automorphisms of $(\g,[\cdot,\cdot]_{\g},\phi_{\g})$.

Let us recall from Definition~\ref{def:Derivation}, the space of derivations of a Hom-Lie algebra $(\g,[\cdot,\cdot]_{\g},\phi_{\g})$ that is denoted by $\Der(\g)$. Then, we have the following proposition.
 \begin{pro} \label{Der}
 Let $(\g, [\cdot, \cdot]_{\g}, \phi_{\g})$ be a Hom-Lie algebra. Then $\big(\Der(\g),[\cdot,\cdot]_{\phi_\g}, \Ad_{\phi_{\g}}\big)$ is a Hom-Lie algebra, which is a subalgebra of the Hom-Lie algebra $\big(\mathfrak{gl}(\frkg),[\cdot,\cdot]_{\phi_{\g}},\Ad_{\phi_{\g}}\big)$ given by Proposition~{\rm \ref{pro:Hom-Lie}}.
 \end{pro}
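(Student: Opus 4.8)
The plan is to verify directly that $\big(\Der(\g),[\cdot,\cdot]_{\phi_\g},\Ad_{\phi_\g}\big)$ is closed under the bracket and the structure map inherited from $\big(\gl(\g),[\cdot,\cdot]_{\phi_\g},\Ad_{\phi_\g}\big)$, since Proposition~\ref{pro:Hom-Lie} already guarantees that the ambient triple is a regular Hom-Lie algebra. Once closure is established, the Hom-Jacobi identity, skew-symmetry, and the fact that $\Ad_{\phi_\g}$ preserves the bracket are automatically inherited from the ambient structure, and we are done. So the whole content is to show: (a) if $D\in\Der(\g)$ then $\Ad_{\phi_\g}(D)=\phi_\g\circ D\circ\phi_\g^{-1}\in\Der(\g)$, and (b) if $D_1,D_2\in\Der(\g)$ then $[D_1,D_2]_{\phi_\g}=\phi_\g\circ D_1\circ\phi_\g^{-1}\circ D_2\circ\phi_\g^{-1}-\phi_\g\circ D_2\circ\phi_\g^{-1}\circ D_1\circ\phi_\g^{-1}\in\Der(\g)$.

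For step (a), I would take $x,y\in\g$ and compute $\Ad_{\phi_\g}(D)[x,y]_\g=\phi_\g D\phi_\g^{-1}[x,y]_\g$. Using that $\phi_\g$ preserves the bracket, $\phi_\g^{-1}[x,y]_\g=[\phi_\g^{-1}x,\phi_\g^{-1}y]_\g$, and then applying the derivation identity of Definition~\ref{def:Derivation} to $D$ at the arguments $\phi_\g^{-1}x,\phi_\g^{-1}y$, followed by applying $\phi_\g$ and again using that $\phi_\g$ preserves the bracket, I should land on exactly the derivation identity for $\Ad_{\phi_\g}(D)$ at $x,y$; the bookkeeping of powers of $\phi_\g$ and the appearance of $\Ad_{\phi_\g^{-1}}\Ad_{\phi_\g}(D)=\Ad_{\phi_\g^{-1}}D$ shifted by one conjugation is the only thing to watch. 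For step (b), I would similarly expand $[D_1,D_2]_{\phi_\g}[x,y]_\g$, substitute $[x,y]_\g$ through the inner $\phi_\g^{-1}$'s to rewrite it as a bracket of $\phi_\g^{-1}$-images, then apply the derivation identity for $D_2$ first and $D_1$ second (to each resulting bracket term), collect the eight resulting terms, and use skew-symmetry of $[\cdot,\cdot]_\g$ together with the Hom-Jacobi identity to cancel the cross terms, leaving precisely the derivation identity for $[D_1,D_2]_{\phi_\g}$.

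The main obstacle is the second computation: the derivation condition in Definition~\ref{def:Derivation} is twisted by $\Ad_{\phi_\g^{-1}}$ on one argument only, so when one iterates it the twisting factors do not combine as cleanly as in the classical (non-Hom) Leibniz case, and one must carefully track which term carries $\phi_\g$ and which carries $\big(\Ad_{\phi_\g^{-1}}D_i\big)$. I expect that after expansion the "unwanted" terms are exactly those that vanish by the Hom-Jacobi identity written in the form $\big[\phi_\g(a),[b,c]_\g\big]_\g+\text{cyclic}=0$, so the identification of these terms with a cyclic sum is the crux. Alternatively—and this may be the cleaner route—one can invoke the cohomological description given in the excerpt: $\Der(\g)=\mathcal{Z}^1(\g,\ad)$, and then the claim that $\Der(\g)$ is a Hom-Lie subalgebra of $\big(\gl(\g),[\cdot,\cdot]_{\phi_\g},\Ad_{\phi_\g}\big)$ follows from general facts about the Hom-Lie algebra structure on $1$-cocycles with values in the adjoint representation; I would mention this as a conceptual shortcut but still carry out the direct verification for completeness.
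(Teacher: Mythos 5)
Your proposal is correct and is exactly the direct verification that the paper leaves to the reader: the whole content is closure of $\Der(\g)$ under $\Ad_{\phi_\g}$ and under $[\cdot,\cdot]_{\phi_\g}$, and both computations go through as you outline (using that $\phi_\g$, hence $\phi_\g^{-1}$, preserves $[\cdot,\cdot]_\g$ and that $\Ad_{\phi_\g^{-1}}\Ad_{\phi_\g}D=D$). One small correction to your anticipated difficulty in step (b): after expanding, the cross terms $[D_1x,D_2y]_\g+[D_2x,D_1y]_\g$ occur identically in $\phi_\g D_1\phi_\g^{-1}D_2\phi_\g^{-1}[x,y]_\g$ and in $\phi_\g D_2\phi_\g^{-1}D_1\phi_\g^{-1}[x,y]_\g$, so they cancel upon subtraction exactly as in the classical Leibniz computation, and no appeal to the Hom-Jacobi identity is needed.
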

\begin{proof} The proof is a straightforward verification, and we leave the details to readers.
\end{proof}

\begin{pro}\label{eqv3}
A linear map $D\colon \g\rightarrow \g$ is a derivation of the Hom-Lie algebra $(\g, [\cdot, \cdot]_{\g}, \phi_{\g})$ if and only if the map $D\circ \phi_{\g}^{-1}\colon \g\rightarrow \g$ is a derivation of the Lie algebra $(\g, [\cdot, \cdot]_{\rm Lie})$ given in Lemma~{\rm \ref{RemarkS6}}.
\end{pro}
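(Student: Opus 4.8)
The plan is a direct computation: translate the Hom-Lie derivation identity of Definition~\ref{def:Derivation} into the ordinary derivation identity for the Lie bracket of Lemma~\ref{RemarkS6} under the substitution $E:=D\circ\phi_{\g}^{-1}$ (equivalently $D=E\circ\phi_{\g}$). Since $\phi_{\g}$ is invertible, $D\mapsto D\circ\phi_{\g}^{-1}$ is a bijection on linear self-maps of $\g$, so it suffices to show that the two identities are equivalent.

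First I would record the elementary facts that make the translation work. From $[x,y]_{\rm Lie}=\big[\phi_{\g}^{-1}(x),\phi_{\g}^{-1}(y)\big]_{\g}$ one gets the inverse relation $[a,b]_{\g}=[\phi_{\g}(a),\phi_{\g}(b)]_{\rm Lie}$; applying $\phi_{\g}$ (which preserves $[\cdot,\cdot]_{\g}$) to this and using it again shows that $\phi_{\g}$ is a Lie algebra automorphism of $(\g,[\cdot,\cdot]_{\rm Lie})$, i.e.\ $\phi_{\g}[x,y]_{\rm Lie}=[\phi_{\g}(x),\phi_{\g}(y)]_{\rm Lie}$. I would also unwind the twist: $\Ad_{\phi_{\g}^{-1}}D=\phi_{\g}^{-1}\circ D\circ\phi_{\g}$, hence $\Ad_{\phi_{\g}^{-1}}D=\phi_{\g}^{-1}\circ E\circ\phi_{\g}^{2}$.

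Next I would rewrite both sides of the identity $D[x,y]_{\g}=\big[\phi_{\g}(x),(\Ad_{\phi_{\g}^{-1}}D)(y)\big]_{\g}+\big[(\Ad_{\phi_{\g}^{-1}}D)(x),\phi_{\g}(y)\big]_{\g}$ in terms of $E$ and $[\cdot,\cdot]_{\rm Lie}$. For the left side, $D[x,y]_{\g}=E\phi_{\g}[\phi_{\g}(x),\phi_{\g}(y)]_{\rm Lie}=E\big[\phi_{\g}^{2}(x),\phi_{\g}^{2}(y)\big]_{\rm Lie}$ by the two facts above. For each term on the right, converting $[\cdot,\cdot]_{\g}$ to $[\cdot,\cdot]_{\rm Lie}$ and cancelling $\phi_{\g}\circ\phi_{\g}^{-1}$ yields $\big[\phi_{\g}^{2}(x),E\phi_{\g}^{2}(y)\big]_{\rm Lie}$ and $\big[E\phi_{\g}^{2}(x),\phi_{\g}^{2}(y)\big]_{\rm Lie}$. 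Thus the Hom-Lie derivation identity for $D$ is equivalent to $E[u,v]_{\rm Lie}=[u,E(v)]_{\rm Lie}+[E(u),v]_{\rm Lie}$ with $u=\phi_{\g}^{2}(x)$, $v=\phi_{\g}^{2}(y)$; as $\phi_{\g}^{2}$ is bijective this holds for all $x,y\in\g$ iff it holds for all $u,v\in\g$, which is exactly the assertion that $E=D\circ\phi_{\g}^{-1}$ is a derivation of $(\g,[\cdot,\cdot]_{\rm Lie})$. Every step is an equivalence, so both implications follow simultaneously.

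There is no real obstacle; the only thing requiring care is the bookkeeping of the powers of $\phi_{\g}$ — one must notice that the Lie-bracket identity lands on the arguments $\phi_{\g}^{2}(x),\phi_{\g}^{2}(y)$ rather than on $x,y$ — together with the twofold use of invertibility of $\phi_{\g}$, once to make sense of $\Ad_{\phi_{\g}^{-1}}$ and once to pass from the shifted identity back to the unshifted one.
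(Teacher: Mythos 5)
Your proof is correct and follows essentially the same route as the paper: a direct translation of the Hom-Lie derivation identity into the Leibniz rule for $[\cdot,\cdot]_{\rm Lie}$ using $[a,b]_{\g}=[\phi_{\g}(a),\phi_{\g}(b)]_{\rm Lie}$, the fact that $\phi_{\g}$ is an automorphism of both brackets, and the invertibility of $\phi_{\g}$. The only cosmetic difference is the direction of the shift (the paper evaluates the Hom-identity at $\phi_{\g}^{-2}(x),\phi_{\g}^{-2}(y)$ to land on $x,y$, while you land on $\phi_{\g}^{2}(x),\phi_{\g}^{2}(y)$ and then invoke surjectivity of $\phi_{\g}^{2}$), which does not change the substance.
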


\begin{proof} Let us assume that $D\colon \g\rightarrow \g$ is a derivation of the Hom-Lie algebra $(\g, [\cdot, \cdot]_{\g}, \phi_{\g})$, i.e.,
\begin{gather*}\label{derivation}
D[x,y]_{\g}=\big[\phi_{\g}(x),\big(\Ad_{\phi_{\g}^{-1}}D\big)(y)\big]_{\g}+\big[\big(\Ad_{\phi_{\g}^{-1}}D\big)(x),
\phi(y)\big]_{\g},\qquad \forall\, x,y\in\mathfrak{g}.
\end{gather*}
We need to show that $D\circ \phi_{\g}^{-1}\colon \g\rightarrow \g$ is a derivation of the Lie algebra $(\g, [\cdot, \cdot]_{\rm Lie})$. Equivalently, we need to show the following identity:
\begin{equation}\label{req eq.}
D\circ \phi_{\g}^{-1}[x, y]_{\rm Lie}= \big[D\big(\phi_{\g}^{-1}(x)\big), y\big]_{\rm Lie}+\big[x, D\big(\phi_{\g}^{-1}(y)\big)\big]_{\rm Lie},\qquad \forall\, x,y\in\mathfrak{g}.
\end{equation}
The left hand side of~\eqref{req eq.} can be written as
\begin{align*}
D\circ \phi_{\g}^{-1}[x, y]_{\rm Lie}&= D\big[\phi_{\g}^{-2}(x), \phi_{\g}^{-2}(y)\big]_{\g}\\
& =\big[\phi_{\g}^{-1}(x), \phi_{\g}^{-1}\circ D\circ \phi_{\g}^{-1}(y)\big]_{\g}+\big[\phi_{\g}^{-1}\circ D\circ \phi_{\g}^{-1}(x), \phi_{\g}^{-1}(y)\big]_{\g},
\end{align*}
and the right hand side of~\eqref{req eq.} can be written as
 \begin{gather*}
\big[D(\phi_{\g}^{-1}(x)), y\big]_{\rm Lie}+\big[x, D(\phi_{\g}^{-1}(y))\big]_{\rm Lie} \\
\qquad{} =
\big[\phi_{\g}^{-1}\circ D\circ \phi_{\g}^{-1}(x), \phi_{\g}^{-1}(y)\big]_{\g} +\big[\phi_{\g}^{-1}(x), \phi_{\g}^{-1}\circ D\circ \phi_{\g}^{-1}(y)\big]_{\g}.
\end{gather*}
Thus, it implies that \eqref{req eq.} holds, i.e., $D\circ \phi_{\g}^{-1}$ is a derivation of the Lie algebra $(\g, [\cdot, \cdot]_{\rm Lie})$.

Conversely, if $D\circ \phi_{\g}^{-1}$ is a derivation of the Lie algebra $(\g, [\cdot, \cdot]_{\rm Lie})$, then it easily follows that~$D$ is a derivation of the Hom-Lie algebra $(\g, [\cdot, \cdot]_{\g},\phi_{\g})$.
\end{proof}

\begin{defi}
Let $(\g,[\cdot,\cdot]_{\g},\phi_{\g} )$ be a Hom-Lie algebra. An {\it automorphism} of $(\g,[\cdot,\cdot]_{\g},\phi_{\g} )$ is a~map $\theta\in {\rm GL}(\g)$ such that
\begin{equation*}
\label{eq:42} \theta[x,y]_{\g}=\big[\Ad_{\phi_{\g}^{-1}}\theta (x),\Ad_{\phi_{\g}^{-1}}\theta(y)\big]_{\g},\qquad \forall\, x,y\in\g.
\end{equation*}
\end{defi}

We denote the set of automorphisms of a Hom-Lie algebra $(\g,[\cdot,\cdot]_{\g},\phi_{\g} )$ by $\Aut(\g)$. In fact, there is a Hom-Lie group structure on the set~$\Aut(\g)$.

\begin{pro}\label{proposition6.3}
 Let $(\g,[\cdot,\cdot]_{\g},\phi_{\g} )$ be a Hom-Lie algebra. Then the tuple $\big(\Aut(\g),\diamond,\phi_{\g},\Ad_{\phi_{\g}}\big)$ is a~Hom-Lie subgroup of the Hom-Lie group $\big({\rm GL}(\g),\diamond,\phi_{\g},\Ad_{\phi_{\g}}\big)$.
\end{pro}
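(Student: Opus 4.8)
The plan is to verify that $\big(\Aut(\g),\diamond,\phi_{\g},\Ad_{\phi_{\g}}\big)$ satisfies the four axioms of a Hom-group (Definition~\ref{Hom-group}), since we already know from Proposition~\ref{HGrp:GL(V)} (applied with $V=\g$ and $\beta=\phi_{\g}$) that $\big({\rm GL}(\g),\diamond,\phi_{\g},\Ad_{\phi_{\g}}\big)$ is a Hom-Lie group. Thus it suffices to show that $\Aut(\g)$ is closed under the product $\diamond$, contains the Hom-unit $\phi_{\g}$, and is closed under taking Hom-inverses; the Hom-associativity and the compatibility of $\Ad_{\phi_{\g}}$ with $\diamond$ are then inherited from the ambient Hom-Lie group. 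The key idea that makes every verification painless is Proposition~\ref{eqv3} together with its automorphism analogue: $\theta\in{\rm GL}(\g)$ is an automorphism of the Hom-Lie algebra $(\g,[\cdot,\cdot]_{\g},\phi_{\g})$ if and only if $\theta\circ\phi_{\g}^{-1}$ is an automorphism of the associated Lie algebra $(\g,[\cdot,\cdot]_{\rm Lie})$ of Lemma~\ref{RemarkS6}. This reduces everything to the classical fact that $\Aut(\g,[\cdot,\cdot]_{\rm Lie})$ is a subgroup of ${\rm GL}(\g)$.

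Concretely, I would first record the correspondence $\theta\mapsto \theta\circ\phi_{\g}^{-1}$ as an auxiliary observation: writing $[x,y]_{\rm Lie}=[\phi_{\g}^{-1}(x),\phi_{\g}^{-1}(y)]_{\g}$ and $\Ad_{\phi_{\g}^{-1}}\theta=\phi_{\g}^{-1}\circ\theta\circ\phi_{\g}$, the automorphism condition $\theta[x,y]_{\g}=\big[\Ad_{\phi_{\g}^{-1}}\theta(x),\Ad_{\phi_{\g}^{-1}}\theta(y)\big]_{\g}$ translates, after substituting $x\mapsto\phi_{\g}(x)$, $y\mapsto\phi_{\g}(y)$ and applying $\phi_{\g}^{-1}$, into $(\theta\circ\phi_{\g}^{-1})[x,y]_{\rm Lie}=\big[(\theta\circ\phi_{\g}^{-1})(x),(\theta\circ\phi_{\g}^{-1})(y)\big]_{\rm Lie}$. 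Next, for closure under $\diamond$: given $\theta_1,\theta_2\in\Aut(\g)$, set $T_i=\theta_i\circ\phi_{\g}^{-1}$, which are Lie-algebra automorphisms. A direct computation shows $(\theta_1\diamond\theta_2)\circ\phi_{\g}^{-1}=\phi_{\g}\circ\theta_1\circ\phi_{\g}^{-1}\circ\theta_2\circ\phi_{\g}^{-2}=\Ad_{\phi_{\g}}(T_1)\circ T_2\circ\phi_{\g}^{-1}\cdot$; one checks this equals $\phi_{\g}\circ T_1\circ T_2\circ\phi_{\g}^{-1}$, a conjugate of the composition $T_1\circ T_2$ of Lie-algebra automorphisms, hence again a Lie-algebra automorphism, so $\theta_1\diamond\theta_2\in\Aut(\g)$ by the correspondence.

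For the Hom-unit, observe $\phi_{\g}\circ\phi_{\g}^{-1}=\Id_{\g}$, which is trivially a Lie-algebra automorphism, so $\phi_{\g}\in\Aut(\g)$; and since $\phi_{\g}$ is the Hom-unit of the ambient group ${\rm GL}(\g)$ and lies in $\Aut(\g)$, it is the Hom-unit of $\Aut(\g)$. For Hom-inverses, if $\theta\in\Aut(\g)$ then by Proposition~\ref{HGrp:GL(V)} its Hom-inverse in ${\rm GL}(\g)$ is $\phi_{\g}\circ\theta^{-1}\circ\phi_{\g}$; one must check this again lies in $\Aut(\g)$, which via the correspondence amounts to checking that $(\phi_{\g}\circ\theta^{-1}\circ\phi_{\g})\circ\phi_{\g}^{-1}=\phi_{\g}\circ\theta^{-1}$ is a Lie-algebra automorphism — and indeed $\phi_{\g}\circ\theta^{-1}=\phi_{\g}\circ(\theta\circ\phi_{\g}^{-1})^{-1}\circ\phi_{\g}^{-1}\circ\phi_{\g}=\phi_{\g}\circ T^{-1}\circ\phi_{\g}^{-1}$ where $T=\theta\circ\phi_{\g}^{-1}$, the conjugate of the inverse Lie-algebra automorphism $T^{-1}$. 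Finally, since $\Aut(\g)$ is cut out inside ${\rm GL}(\g)$ by polynomial equations in the matrix entries of $\theta$, it is an embedded submanifold and the Hom-group operations restrict to smooth maps, and $\Ad_{\phi_{\g}}$ restricts to a diffeomorphism of $\Aut(\g)$; hence $\big(\Aut(\g),\diamond,\phi_{\g},\Ad_{\phi_{\g}}\big)$ is a Hom-Lie subgroup. The only mildly delicate point is bookkeeping the $\phi_{\g}$-conjugations carefully so that each transported map genuinely lands in $\Aut(\g,[\cdot,\cdot]_{\rm Lie})$; there is no deep obstacle, just the need to not lose track of the twists.
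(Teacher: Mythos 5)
Your proposal is correct in substance but takes a genuinely different route from the paper's. The paper proves closure of $\Aut(\g)$ under the product $\diamond$, under inversion, and under $\Ad_{\phi_{\g}}$, together with $\phi_{\g}\in\Aut(\g)$, by direct computation with the Hom-Lie bracket and the defining identity of an automorphism; you instead reduce everything to the associated Lie algebra $(\g,[\cdot,\cdot]_{\rm Lie})$ of Lemma~\ref{RemarkS6} and invoke the classical fact that its automorphism group is a group. That reduction is sound, and it can in fact be stated more cleanly than you do: carrying out your substitution (and using multiplicativity of $\phi_{\g}$) shows that $\theta\in\Aut(\g)$ if and only if $\theta$ itself preserves $[\cdot,\cdot]_{\rm Lie}$, i.e., $\Aut(\g)=\Aut(\g,[\cdot,\cdot]_{\rm Lie})$ as subsets of ${\rm GL}(\g)$; your version with $\theta\circ\phi_{\g}^{-1}$ (modelled on the derivation statement, Proposition~\ref{eqv3}) is equivalent only because $\phi_{\g}$ lies in this group, and it is not what the substitution literally produces. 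Two of your intermediate formulas contain slips: the expression $\Ad_{\phi_{\g}}(T_1)\circ T_2\circ\phi_{\g}^{-1}$ does not equal $(\theta_1\diamond\theta_2)\circ\phi_{\g}^{-1}$, although your final form $\phi_{\g}\circ T_1\circ T_2\circ\phi_{\g}^{-1}$ does; and $\phi_{\g}\circ\theta^{-1}$ equals $T^{-1}$, not $\phi_{\g}\circ T^{-1}\circ\phi_{\g}^{-1}$. Neither slip affects the conclusion, since every map in sight is a composite of elements of the group $\Aut(\g,[\cdot,\cdot]_{\rm Lie})$. One point in your favor: you verify that the Hom-inverse $\phi_{\g}\circ\theta^{-1}\circ\phi_{\g}$ (as identified in Proposition~\ref{HGrp:GL(V)}) lies in $\Aut(\g)$, which is what axiom~(iv) of Definition~\ref{Hom-group} actually requires, whereas the paper checks only the ordinary inverse $\theta^{-1}$. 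Your closing remark on smoothness of the submanifold $\Aut(\g)\subset{\rm GL}(\g)$ is extra relative to the paper but harmless.
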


\begin{proof} First, we show that the structure map $\phi_{\g}$ is an automorphism of the Hom-Lie algebra $(\g,[\cdot,\cdot]_{\g},\phi_{\g} )$. It follows from the following expression:
\[ \phi_{\g}[x, y]_{\g}=[\phi_{\g}(x), \phi_{\g}(y)]_{\g}=\big[\Ad_{\phi_{\g}^{-1}}\phi_{\g} (x),\Ad_{\phi_{\g}^{-1}}\phi_{\g}(y)\big]_{\g}\]
for all $x,y\in\g$. Now, let $\theta\in\Aut(\g)$, then we have
\begin{equation*}
\theta\big[\Ad_{\phi_{\g}^{-1}}\theta^{-1}(x), \Ad_{\phi_{\g}^{-1}}\theta^{-1}(y)\big]_{\g}=[x, y]_{\g},
\end{equation*}
which implies that $\theta^{-1}[x, y]_{\g}=\big[\Ad_{\phi_{\g}^{-1}}\theta^{-1}(x), \Ad_{\phi_{\g}^{-1}}\theta^{-1}(y)\big]_{\g}$ for all $x,y\in\g$. Thus $\theta^{-1}\in\Aut(\g)$.

Moreover,
\begin{align*}
\theta_{1}\diamond\theta_{2}[x, y]_{\g}&= \phi_{\g}\circ\theta_{1}\circ\phi_{\g}^{-1}\circ\theta_{2}\circ\phi_{\g}^{-1}[x,y]_{\g}
 = \phi_{\g}\circ\theta_{1}\circ\phi_{\g}^{-1}\big[\phi_{\g}^{-1}\circ\theta_{2} (x),\phi_{\g}^{-1}\circ\theta_{2}(y)\big]_{\g}\\
&= \big[\theta_{1}\circ\phi_{\g}^{-1}\circ\theta_{2}(x), \theta_{1}\circ\phi_{\g}^{-1}\circ\theta_{2}\circ\phi_{\g}(y)\big]_{\g}\\
&= \big[\Ad_{\phi_{\g}^{-1}}(\theta_{1}\diamond\theta_{2}) (x),\Ad_{\phi_{\g}^{-1}}(\theta_{1}\diamond\theta_{2})(y)\big]_{\g},
\end{align*}
for all $\theta_{1}, \theta_{2} \in \Aut(\g)$, and $x,y\in\g$. Therefore, $\theta_{1}\diamond\theta_{2} \in \Aut(\g).$ Finally, we have
\begin{align*}
\Ad_{\phi_{\g}}\theta[x,y]_{\g} = \phi_{\g}\circ\theta\circ\phi_{\g}^{-1}[x, y]_{\g}=[\theta(x), \theta(y)]_{\g}=\big[\Ad_{\phi_{\g}^{-1}}(\Ad_{\phi_{\g}}\theta)(x), \Ad_{\phi_{\g}^{-1}}(\Ad_{\phi_{\g}}\theta)(y)\big]_{\g},
\end{align*}
which implies that $\Ad_{\phi_{\g}}\theta \in \Aut({\g})$. Hence, $\big(\Aut(\g),\diamond,\phi_{\g},\Ad_{\phi_{\g}}\big)$ is a Hom-Lie subgroup of the Hom-Lie group $\big({\rm GL}(\g),\diamond,\phi_{\g},\Ad_{\phi_{\g}}\big)$.
\end{proof}

\begin{thm}\label{thm:deraut}
 With the above notations, the triple $\big(\Der(\g),[\cdot,\cdot]_{\phi_{\g}},Ad_{\phi_{\g}}\big)$ is the Hom-Lie algebra of the Hom-Lie group $\big(\Aut(\g),\diamond,\phi_{\g},\Ad_{\phi_{\g}}\big)$.
\end{thm}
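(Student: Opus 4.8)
The plan is to realize $\big(\Der(\g),[\cdot,\cdot]_{\phi_{\g}},\Ad_{\phi_{\g}}\big)$ inside the Hom-Lie algebra of the ambient Hom-Lie group $\big({\rm GL}(\g),\diamond,\phi_{\g},\Ad_{\phi_{\g}}\big)$ and then to show it is exactly the piece cut out by the Hom-Lie subgroup $\Aut(\g)$. By Proposition~\ref{proposition6.3}, $\big(\Aut(\g),\diamond,\phi_{\g},\Ad_{\phi_{\g}}\big)$ is a Hom-Lie subgroup of $\big({\rm GL}(\g),\diamond,\phi_{\g},\Ad_{\phi_{\g}}\big)$, whose associated Hom-Lie algebra is $\big(\gl(\g),[\cdot,\cdot]_{\phi_{\g}},\Ad_{\phi_{\g}}\big)$ (this is the content of Section~\ref{Integrationgl}, with $\beta=\phi_{\g}$). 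Hence the Hom-Lie algebra $\g^{!}$ of $\big(\Aut(\g),\diamond,\phi_{\g},\Ad_{\phi_{\g}}\big)$ is a Hom-Lie subalgebra of $\big(\gl(\g),[\cdot,\cdot]_{\phi_{\g}},\Ad_{\phi_{\g}}\big)$, and so is $\big(\Der(\g),[\cdot,\cdot]_{\phi_{\g}},\Ad_{\phi_{\g}}\big)$ by Proposition~\ref{Der}. Since both inherit the bracket and the structure map from $\gl(\g)$, it is enough to prove that $\g^{!}=\Der(\g)$ as subspaces of $\gl(\g)$.

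The first auxiliary fact I would record is the automorphism analogue of Proposition~\ref{eqv3}: a short computation just like the one there shows that $\theta\in\Aut(\g)$ if and only if $\theta\circ\phi_{\g}^{-1}\in\Aut\big(\g,[\cdot,\cdot]_{\rm Lie}\big)$, the ordinary automorphism group of the Lie algebra of Lemma~\ref{RemarkS6}. In particular $\Aut(\g)$ is the preimage of $\Aut\big(\g,[\cdot,\cdot]_{\rm Lie}\big)$ under the diffeomorphism $\theta\mapsto\theta\circ\phi_{\g}^{-1}$ of ${\rm GL}(\g)$; since $\Aut\big(\g,[\cdot,\cdot]_{\rm Lie}\big)$ is a closed (hence embedded) subgroup of ${\rm GL}(\g)$, so is $\Aut(\g)$. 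Next, by Theorem~\ref{thm:imp1} and the computations of Section~\ref{Integrationgl}, the one-parameter Hom-Lie subgroup of $\big({\rm GL}(\g),\diamond,\phi_{\g},\Ad_{\phi_{\g}}\big)$ attached to $D\in\gl(\g)$ is $\sigma_D(t)=\phi_{\g}\circ e^{tD\circ\phi_{\g}^{-1}}$, with $e$ the usual exponential; and because $\Aut(\g)$ is embedded, such a $\sigma_D$ takes its values in $\Aut(\g)$ if and only if it is a one-parameter Hom-Lie subgroup of $\Aut(\g)$. Combining this with Theorem~\ref{thm:imp1} applied to $\Aut(\g)$ identifies $\g^{!}$, as a subspace of $\gl(\g)$, with $\big\{D\in\gl(\g)\mid \sigma_D(t)\in\Aut(\g)\ \text{for all }t\in\mathbb{R}\big\}$.

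It remains to evaluate this set. Since $\phi_{\g}$ is itself a Lie-algebra automorphism of $\big(\g,[\cdot,\cdot]_{\rm Lie}\big)$ (immediate from Lemma~\ref{RemarkS6}), conjugation by $\phi_{\g}$ maps $\Aut\big(\g,[\cdot,\cdot]_{\rm Lie}\big)$ onto itself; and $\sigma_D(t)\circ\phi_{\g}^{-1}=\Ad_{\phi_{\g}}\big(e^{tD\circ\phi_{\g}^{-1}}\big)$. Therefore $\sigma_D(t)\in\Aut(\g)$ for all $t$ is equivalent to $\Ad_{\phi_{\g}}\big(e^{tD\circ\phi_{\g}^{-1}}\big)\in\Aut\big(\g,[\cdot,\cdot]_{\rm Lie}\big)$ for all $t$, hence to $e^{tD\circ\phi_{\g}^{-1}}\in\Aut\big(\g,[\cdot,\cdot]_{\rm Lie}\big)$ for all $t$, hence --- by the classical fact that $e^{tX}$ is an automorphism of a finite-dimensional Lie algebra for all $t$ precisely when $X$ is a derivation --- to $D\circ\phi_{\g}^{-1}\in\Der\big(\g,[\cdot,\cdot]_{\rm Lie}\big)$, and finally, by Proposition~\ref{eqv3}, to $D\in\Der(\g)$. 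Thus $\g^{!}=\Der(\g)$, and together with the reduction of the first paragraph this proves the theorem.

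I expect the main obstacle to be bookkeeping rather than a conceptual point: one must (i) establish the automorphism analogue of Proposition~\ref{eqv3} with exactly the $\phi_{\g}^{-1}$-twist occurring there; (ii) make sure that the identification $T_{\phi_{\g}}{\rm GL}(\g)\cong\gl(\g)$ implicitly used in Section~\ref{Integrationgl} is the one under which $\Ad_{\phi_{\g}}$ and $[\cdot,\cdot]_{\phi_{\g}}$ restrict to the structure map and bracket on $\Der(\g)$ furnished by Proposition~\ref{Der} --- which is precisely why Proposition~\ref{Der} is quoted rather than reproved; and (iii) justify the embedded-subgroup claim carefully enough that ``$\sigma_D$ is valued in $\Aut(\g)$'' really is equivalent to ``$D\in\g^{!}$''. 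None of these needs a new idea once the twists by $\phi_{\g}$ are tracked with care.
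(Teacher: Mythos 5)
Your proof is correct and follows essentially the same route as the paper: both arguments reduce the statement to the classical fact that $e^{t\delta}$ is a Lie algebra automorphism for all $t$ if and only if $\delta$ is a Lie algebra derivation, by untwisting with $\phi_{\g}^{-1}$ via Proposition~\ref{eqv3} and Lemma~\ref{RemarkS6}. The only difference is organizational: you isolate the equivalence $\theta\in\Aut(\g)\Leftrightarrow\theta\circ\phi_{\g}^{-1}\in\Aut\big(\g,[\cdot,\cdot]_{\rm Lie}\big)$ as a separate lemma and are more explicit about the embedded-subgroup identification of $\g^{!}$, whereas the paper establishes the same equivalence inline for $\theta=\Hexp(tD)$.
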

\begin{proof} Let us first assume that $D$ is a derivation of the Hom-Lie algebra $(\g,[\cdot,\cdot]_{\g},\phi_{\g} )$. By~\eqref{eq:35}, we have $\Hexp(tD)=\phi_{\g} e^{tD\phi_{\g}^{-1}}$. Then we prove that $\Hexp(tD)$ is an automorphism of the Hom-Lie algebra $(\g, [\cdot, \cdot]_{\g}, \phi_{\g})$. For all $x,y\in \g$, we get the following equation:
\begin{equation}
\label{eq:43}\Hexp(tD)[x, y]_{\g}=\Hexp(tD)[\phi_{\g}(x), \phi_{\g}(y)]_{\rm Lie}=\phi_{\g} e^{tD\phi_{\g}^{-1}}[\phi_{\g}(x), \phi_{\g}(y)]_{\rm Lie}.
\end{equation}
By Proposition \ref{eqv3}, the map $D\circ\phi_{\g}^{-1}\colon \g\rightarrow \g$ is a derivation of Lie algebra $(\g,[\cdot,\cdot]_{\rm Lie})$. This, in turn, implies that $e^{tD\phi_{\g}^{-1}}$ is an automorphism of $(\g,[\cdot,\cdot]_{\rm Lie})$. Thus,
\begin{equation*}
\phi_{\g} e^{tD\phi_{\g}^{-1}}[\phi_{\g}(x), \phi_{\g}(y)]_{\rm Lie}=\phi_{\g}\big[e^{tD\phi_{\g}^{-1}}\phi_{\g}(x), e^{tD\phi_{\g}^{-1}}\phi_{\g}(y)\big]_{\rm Lie}.
\end{equation*}
By Lemma \ref{RemarkS6}, we have
\begin{align}
\phi_{\g}\big[e^{tD\phi_{\g}^{-1}}\phi_{\g}(x), e^{tD\phi_{\g}^{-1}}\phi_{\g}(y)\big]_{\rm Lie}&= \phi_{\g}\big[\phi_{\g}^{-1}e^{tD\phi_{\g}^{-1}}\phi_{\g}(x),\phi_{\g}^{-1}e^{tD\phi_{\g}^{-1}}\phi_{\g}(y)\big]_{\g}\nonumber\\
&= \big[e^{tD\phi_{\g}^{-1}}\phi_{\g}(x), e^{tD\phi_{\g}^{-1}}\phi_{\g}(y)\big]_{\g}.\label{eq:cond}
\end{align}
i.e.,
\begin{equation*}
\big[e^{tD\phi_{\g}^{-1}}\phi_{\g}(x), e^{tD\phi_{\g}^{-1}}\phi_{\g}(y)\big]_{\g}=\big[\Ad_{\phi_{\g}^{-1}}\Hexp(tD)(x), \Ad_{\phi_{\g}^{-1}}\Hexp(tD)(y)\big]_{\g}.
\end{equation*}
Therefore,
\begin{equation}
\label{eq:45}\Hexp(tD)[x, y]_{\g}=\big[\Ad_{\phi_{\g}^{-1}}\Hexp(tD)(x), \Ad_{\phi_{\g}^{-1}}\Hexp(tD)(y)\big]_{\g}.
\end{equation}

Conversely, we show that if $D\colon \g\rightarrow \g$ is a linear map and it satisfies~\eqref{eq:45}, then $D$ is a~derivation of the Hom-Lie algebra $(\g, [\cdot, \cdot]_{\g}, \phi_{\g})$. Note that~\eqref{eq:43} and~\eqref{eq:cond} implies that
\begin{gather*}
\phi_{\g} e^{tD\phi_{\g}^{-1}}[\phi_{\g}(x), \phi_{\g}(y)]_{\rm Lie}=\big[e^{tD\phi_{\g}^{-1}}\phi_{\g}(x), e^{tD\phi_{\g}^{-1}}\phi_{\g}(y)\big]_{\g}=\phi_{\g}\big[e^{tD\phi_{\g}^{-1}}\phi_{\g}(x), e^{tD\phi_{\g}^{-1}}\phi_{\g}(y)\big]_{\rm Lie}
\end{gather*}
for all $x,y\in \g$. Therefore, $D\phi_{\g}^{-1}$ is a derivation of the Lie algebra $(\g, [\cdot, \cdot]_{\rm Lie})$. Subsequently, from Proposition~\ref{eqv3}, it follows that $D$ is a derivation of the Hom-Lie algebra $(\g, [\cdot, \cdot]_{\g}, \phi_{\g})$.
\end{proof}

\begin{rmk}
 Consider the Hom-Lie algebra $(_q\sln_2,[\cdot,\cdot],\alpha)$ given in \cite[Example 1]{LS}, where
\begin{equation*}
[ e,f]=\frac{1+q}{2}h,\qquad [ h,e]=2e,\qquad [h,f]=-2qf
\end{equation*}
and
\begin{equation*}
\alpha(e)=\frac{q^{-1}+1}{2}e,\qquad \alpha(h)=h, \qquad \alpha(f)=\frac{q+1}{2}f.
\end{equation*}
Here $\{e, f, h\}$ is the basis of $\sln_2$. It is straightforward to deduce that
\begin{equation*}
\alpha([ e, f])=\frac{q+1}{2}h, \qquad[\alpha(e),\alpha(f)]=\frac{q^{-1}+1}{2}\frac{q+1}{2}\frac{q+1}{2}h,
\end{equation*}
which implies that $\alpha$ does not preserve $[\cdot,\cdot]$, i.e., $\alpha$ is not an algebraic morphism. So the Hom-Lie algebra $_q\sln_2$ is not a multiplicative Hom-Lie algebra. Thus the integration-differentiation approach developed in this paper does not apply to this concrete example. We will study the integration of this more general case in the future.
\end{rmk}

\subsection*{Acknowledgements}

We give our warmest thanks to the referees for very helpful suggestions that improve the paper. Research supported by NSFC (11922110)

\pdfbookmark[1]{References}{ref}
\LastPageEnding

\end{document}